\definecolor{couleur_cite}{rgb}{0.05,.4,0.05}
\definecolor{couleur_link}{rgb}{0.05,0.05,0.4}
\numberwithin{equation}{section}
\newcommand{\g}{\mathfrak{g}}
\newcommand{\PO}{\mathrm{PO}}
\newcommand{\p}{\mathfrak{p}}
\newcommand{\C}{\mathbb{C}}
\newcommand{\GL}{\mathrm{GL}}
\newcommand{\PGL}{\mathrm{PGL}}
\newcommand{\SL}{\mathrm{SL}}
\newcommand{\R}{\mathbb{R}}
\newcommand{\Z}{\mathbb{Z}}
\newcommand{\Q}{\mathbb{Q}}
\newcommand {\pnorm}[1]   {\left\lVert #1 \right\rVert}
\DeclareMathOperator {\Mdiag}{diag}
\newcommand{\abs}[1]{\ensuremath{\left|#1\right|}}
\newcommand{\Ad}{\mathrm{Ad}}
\newcommand{\Run}{\underset{s=1}{\rm Res}\;}
\newcommand{\Mtroistrois}[9]{
\begin{pmatrix}
#1&#2&#3 \\
#4&#5&#6 \\
#7&#8&#9
\end{pmatrix}
}
\newcommand{\splus} {s^+_{\rm cusp}}
\newcommand{\sminus} {s^-_{\rm cusp}}
\DeclareFontFamily{OT1}{rsfs}{}
\DeclareFontShape{OT1}{rsfs}{n}{it}{<-> rsfs10}{}
\DeclareMathAlphabet{\mathscr}{OT1}{rsfs}{n}{it}
\newcommand{\fig}[1]{\medskip {\bf Figure: #1} \medskip}
\newtheorem{thm}{Theorem}[section]
\newtheorem{lem}[thm]{Lemma}
\newtheorem{prop}[thm]{Proposition}
\newtheorem{cor}[thm]{Corollary}
\newtheorem{defn}[thm]{Definition}
\theoremstyle{definition}
\newtheorem*{ex*}{Example}
\newtheorem{remark}[thm]{Remark}
\newtheorem{hyp}[thm]{Hypothesis}
\theoremstyle{remark}
\begin{document}

\title[]{Large values of cusp forms on $\GL_n$}

\author[]{Farrell Brumley}
\address{Institut Galil\'ee, Universite Paris 13, 93430 Villetaneuse France}
\email{brumley@math.univ-paris13.fr}

\author[]{Nicolas Templier}
\address{Department of Mathematics, Cornell University, Malott Hall, Ithaca NY 14853-4201.}
\email{templier@math.cornell.edu}

\begin{abstract}
We establish the transition behavior of Jacquet-Whittaker functions on split semi-simple Lie groups. As a consequence, we show that for certain finite volume Riemannian manifolds, the local bound for normalized Laplace eigenfunctions does not hold globally.  
\end{abstract}

\thanks{{\it 2010 Mathematics Subject Classification.} Primary: 11F70. Secondary: 58K55.}
\keywords{Sup norms, Maass forms, Whittaker functions, oscillatory integrals, Lagrangian mappings, Pearcey function}

\maketitle
\tableofcontents

Let $M$ be a complete $d$-dimensional Riemannian manifold without boundary. A central question in semiclassical analysis is to understand the concentration features of Laplacian $L^2$-eigenfunctions $\Delta f = \lambda f$, 
in relation with the geometry of $M$.  
A touchstone is the well-known 
bound of H\"ormander~\cite{Avakumovic:eigenfunktionen,Ho1} which implies that
\begin{equation}\label{convex}\tag{A}
|f(x)|\ll \lambda^{\frac{d-1}{4}}\pnorm{f}_2,
\end{equation}
where the multiplicative constant depends continuously on $x\in M$. This bound is local, being based on the principle that if an eigenfunction is large at a point, it remains so in a small neighborhood.

When $f$ is bounded globally on $M$, one may go further and 
compare the sup norm $\|f\|_\infty=\sup_{x\in M}|f(x)|$ with the $L^2$-norm, as a function of  $\lambda$. This boundedness is known to hold on any of the following three classes of manifolds: when both the sectional curvature and the injectivity radius of $M$ are bounded~\cite{Do};
under certain assumptions on the isoperimetric or isocapacity inequality of $M$~\cite{Cianchi-Mazya:noncompact}; and when $M$ is a finite volume locally symmetric
space and $f$ is cuspidal~\cite{HC}.

We shall focus in this article on this last case. Thus $M=\Gamma\backslash S$, where $S$ is a Riemannian globally symmetric space and $\Gamma$ is a lattice in the Lie group $G$ of isometries of $S$. Recall that a function on $M$ is said to be cuspidal if the constant term integrals around each cusp are zero, and that a cusp form is, by definition, a cuspidal $L^2$-eigenfunction of the Laplacian. When $\Gamma$ is congruence arithmetic then it is known~\cite{LV07} that cusp forms obey 
a Weyl law. 
This makes the automorphic setting well adapted to a global study of sup norms.

Nevertheless, when $\Gamma\backslash S$ is non-compact little is known on bounds (of any quality) on the sup norm $\|f\|_\infty$ of cusp forms. A sample qualitative question is whether an eigenfunction attains its largest value in a fixed bounded subset. 
For example, it is shown in \cite{IwSa,SaMo} that the local bound \eqref{convex} extends as a global bound to all cusp forms on the modular surface, the non-compact arithmetic hyperbolic surface associated with $\Gamma=\SL_2(\Z)$. But this statement masks the curious fact that such eigenfunctions can be large in the cusp, due to a transition from an oscillatory to a decay regime. As a corollary to our main result, Theorem \ref{th:gln}, we shall
show that for $\Gamma = \SL_n(\Z)$, $n\ge 6$, the local bound~\eqref{convex} does not 
extend globally.

In light of this, it is of interest to estimate the size of eigenfunctions over various regions in $M$, such as on bounded sets escaping to infinity. We approach these questions of transition behavior by modeling cusp forms by higher rank Whittaker functions. In the case of the modular surface, it is the
classical Bessel function which accounts for the large size of cusp forms: in a small region of height close to $\sqrt{\lambda}$, it is well-known that such functions admit a turning point and inherit the behavior of the Airy function. We establish higher rank transition behavior for more general Whittaker functions on split semisimple groups, showing, in particular, that the Lagrangian manifold associated with the Jacquet integral lies as an open subset of the Toda isospectral manifold, well-known in the physics literature. Our sharpest quantitative results are for $\GL_3(\R)$, where we show that the Pearcey function plays the role of the Airy function. 

\section{Statement of results}\label{s:statement}

Let $G$ be a split semisimple group over $\R$, $K$ a maximal compact subgroup of $G$. We 
denote by $S=G/K$ the associated Riemannian globally symmetric 
space. Let $\Gamma$ be a non-uniform lattice in $G$, which we shall assume to
be of congruence type. We consider functions $f\in C^\infty(\Gamma\backslash S)$ which 
are eigenfunctions of the ring of $G$-invariant differential operators
$\mathscr{D}_G(S)$ on $S$, eigenfunctions of the Hecke operators, and cuspidal. We shall 
refer to functions $f$ satisfying the above conditions as {\it
Hecke-Maass cusp forms}.  In particular, if $\Delta$ denotes the (non-negative) Laplacian 
on $M$ then $\Delta f=\lambda f$ for some $\lambda  >  0 $.

Let $\mathfrak{a}$ be a maximal abelian subalgebra of the Lie algebra $\g$ of $G$, chosen 
to 
lie in the $-1$ eigenspace of the Cartan involution associated with $K$. Let $H: 
G\rightarrow \mathfrak{a}$ denote the Iwasawa projection. Then $f$ shares the same 
$\mathscr{D}_G(S)$-eigenvalues as the function
$\exp((\rho+\nu)(H(x))$ on $S$, for some $\nu\in\mathfrak{a}_\C^*$. We call $\nu$ the 
spectral
parameter of $f$. We shall restrict our attention to Hecke--Maass cusp forms $f$ whose 
spectral parameters lie in a cone $\R_+\Omega\subset i\mathfrak{a}^*_{\rm
reg}$, where $\Omega\subset i\mathfrak{a}^*_{\rm reg}$ is an open bounded set. Here the 
non-singular set $\mathfrak{a}^*_{\rm reg}$ is the complement in
$\mathfrak{a}^*$ of the root hyperplanes. In other words, we ask that $f$ be both 
tempered and sufficiently regular.

We introduce a constant associated with $G$ arising from the integral representation of 
Whittaker functions. Let $B$ be a Borel subgroup of $G$ with unipotent radical $U$. Let 
${\rm ht}(G)$ be the sum of the heights of the positive roots. Then we define the 
non-negative half-integer
\[
c(G)=({\rm ht}(G)-\dim U)/2.
\]
Note that $c(G)=0$ if and only if $G$ is a product of rank one groups. For cusp forms 
admitting a Whittaker expansion, this constant $c(G)$ will be a useful exponential 
benchmark for their sup norms.

\subsection{Large values of cusp forms on $\GL_n$}\label{sub:firstmain}

Our first result concerns the arithmetic locally symmetric spaces associated with the 
group $\PGL_n(\R)$. Write
\[
S_n=\PGL_n(\R)/{\rm PO}(n)
\]
for the associated Riemannian globally symmetric space. This can be identified with the 
space of real positive definite symmetric matrices up to scalars, and the quotient 
$\GL_n(\Z)\backslash S_n$ is the space of rank $n$ Euclidean lattices, i.e., lattices up 
to rotation and dilation. In the present case, the existence of an infinite number of 
linearly independent Hecke--Maass cusp forms is a well-known result of 
M\"uller~\cite{Muller07}. Our first main theorem furnishes a lower bound on sup norms of 
cusp forms on $\Gamma \backslash S_n$, for $\Gamma$ a congruence subgroup of $\GL_n(\Z)$, 
with respect to the above constant $c(n)=c(\PGL_n(\R))$. 

\begin{thm}\label{th:gln} Let $\Omega\subset i\mathfrak{a}_{\rm reg}^*$ be an open 
bounded set. For any Hecke--Maass cusp form $f$ on $\Gamma\backslash S_n$ whose spectral 
parameter lies in $\sqrt{\lambda}\Omega$, we have
\begin{equation*}
\pnorm{f}_\infty \gg\lambda^{\frac{c(n)}{2}-\varepsilon} \pnorm{f}_2.
\end{equation*}
The implied multiplicative constant depends on $\varepsilon, \Omega$, and $\Gamma$.
\end{thm}

From the explicit values of the exponents
\[
\frac{c(n)}{2}=\frac{n(n-1)(n-2)}{24}\qquad\text{ and }\qquad \frac{d-1}{4}=\frac{n^2 + n 
- 4}{8},
\]
one sees that $\frac{c(n)}{2}>\frac{d-1}{4}$ for all $n\geq 6$, and thus 
we deduce the following. 
\begin{cor}
For $n \ge 6$ and $M=\Gamma\backslash S_n$, the bound~\eqref{convex} on Laplace 
eigenfunctions does not hold globally. 
\end{cor}

We also see that under the same assumptions, Hecke--Maass cusp forms on $\Gamma\backslash 
S_n$ achieve their maximum in the
cusp, not in the bulk. We speculate that this could be true for all $n\ge 2$. 
We show in the next corollary that it holds for $n\ge 5$. We need the recent result of~Blomer--Maga~\cite{Blomer-Maga:gln} and 
Marshall~\cite{Marshall:supnorm}, which relies on the
uniform bound for spherical functions of~\cite{BP:sup-norm-Siegel,Matz-Templier}, and 
says that there exists a constant 
$\delta(n) >  0$ depending only on $n$, such that
\begin{equation*}\label{d-r}
|f(x)|\ll \lambda^{\frac{n(n-1)}{8} -\delta(n)}\pnorm{f}_2,
\end{equation*}
where the implied constant depends continuously on $x\in M$. Since $\frac{c(5)}{2}=\frac{5}{2} = \frac{5\cdot (5-1)}{8}$, we have the following.
\begin{cor}\label{cor:bulk}
For $n\ge 5$ and any bounded subset $\mathcal{B} \subset \Gamma\backslash S_n$, all but 
finitely many Hecke--Maass cusp forms $f$ as in Theorem \ref{th:gln} satisfy
\[
\pnorm{f}_\infty
> \sup_{g\in \mathcal{B}} |f(g)|.
\]
\end{cor}

The exceedingly large values of cusp forms in Theorem \ref{th:gln} can be viewed as the 
semiclassical expression of a result of Kleinbock-Margulis~\cite{KM99}, according to 
which almost all geodesics penetrate the cusp at logarithmic speed $1/{\rm ht}(G)$. This 
reflects the small volume carried by the cusps, creating a bottleneck phenomenon as 
standing waves transition from an oscillatory to a decay regime. 

We emphasize that the lower bounds of Theorem \ref{th:gln} are of a very different nature 
than those of Rudnick-Sarnak \cite{RuSa}, Mili\'cevi\'c
\cite{Milicevic}, or Lapid-Offen \cite{Lapid-Offen}, all of which show power growth of 
sup norms of certain special Hecke--Maass forms. These latter results stem
from the functorial (in the sense of Langlands) origin of these eigenfunctions, and their 
proofs involve compact periods. The large values of such special eigenfunctions occur in 
bounded subset of $\Gamma \backslash S$, and the behavior in the cusp is thus not 
reflected in these bounds.

\subsection{Lower bounds on Whittaker functions}\label{intro:Whit}
Theorem \ref{th:gln} is deduced from corresponding lower bounds of Whittaker functions, 
through the Fourier-Whittaker period of $f$ along the maximal unipotent subgroup $U$,
following the method of~\cite{Temp:p-adic}. This passage makes use of some special 
features of the group $\PGL_n$, but the bounds on Whittaker functions themselves should 
be 
valid in wider generality. We thus return to the setting of a split semisimple real Lie 
group $G$ with associated symmetric space $S$.

A Whittaker function $W$ on $S$ is a $\mathscr{D}_G(S)$-eigenfunction of moderate growth which transforms under the $U$-action by a non-degenerate additive character $\psi$. 
One can think of $W$ as a section of a line bundle defined by $\psi$ over the quotient 
$U\backslash S$. Whittaker functions on $U\backslash S$ are expected to 
vanish at 0 and 
be of exponential decay at infinity, so are bounded (see 
Figure~1 for an illustration of how to partition the space between a 
neighborhood of 0 and infinity). A key point of the present work is that one may normalize $W$ in a natural way using its 
expression as an oscillatory integral. The existence of such an integral expression was 
proved by Jacquet
\cite{Jacquet67}. Combined with other results from representation theory, such as the 
multiplicity one theorem~\cite{Shalika74},
 this allows one to canonically define
 normalized Whittaker functions on $S$, which we call the Jacquet-Whittaker functions, see \S\ref{Whitt-structures} for details. 

\begin{thm}\label{general tau}  Let $\Omega\subset 
i\mathfrak{a}_{\rm reg}^*$ be an open bounded set. Assume that $G=\PGL(n)$, or that 
Hypothesis~\ref{hypothesis} is satisfied. Then the Jacquet-Whittaker functions 
$W$ on $S$ whose spectral parameter lie in $\sqrt{\lambda}\Omega$ satisfy
\begin{equation*}
\pnorm{W}_\infty \gg \lambda^{\frac{c(G)}{2}}.
\end{equation*}
The implied multiplicative constant depends on $\Omega$.
\end{thm}

\begin{remark}\label{intro-rmk}
We make two remarks on the general linear group in the formulation and proofs of Theorems \ref{th:gln} and \ref{general tau}.

\medskip

\noindent {\it i)} In the case of $G=\PGL_n(\R)$, there is a naturally defined inner 
product with respect to which any Whittaker function on 
$S$ is $L^2$-integrable and which, moreover, assigns the Jacquet-Whittaker function 
$L^2$-norm $1$. Following~\cite{Temp:p-adic}, one can then express Theorem~\ref{general 
tau} in a scale-invariant way 
as 
\[
\pnorm{W}_\infty \gg \lambda^{\frac{c(n)}{2}}\pnorm{W}_2.
\]
One reflection of a special feature of $\PGL_n$ is the existence of a formula (due to 
Stade \cite{Stade02}) relating the $L^2$-norm of the Whittaker function to local 
Rankin-Selberg $L$-functions. We exploit this fact to give a proof of 
Theorem \ref{general tau} for $\PGL_n(\R)$ in \S\ref{GLn-argument}.

\medskip

\noindent {\it ii)} The restriction to $G=\PGL_n(\R)$ in Theorem \ref{th:gln} is in part 
due to the genericity of Hecke--Maass cusp forms on $\Gamma\backslash
S_n$. This property is used to reduce lower bounds on $f$ to those on any given (non-degenerate) Fourier-Whittaker coefficient. It is well known that cusp forms
on other groups may fail to be generic. To extend the statement of Theorem \ref{th:gln} to such a setting (using Theorem \ref{general tau}), one might either
wish to use different special functions and investigate their size, or retain the Fourier-Whittaker coefficients and simply restrict one's attention to the
generic spectrum. In either approach, one must be able to control the relation between the $L^2$ normalization of the cusp form and that of the special function.
For Whittaker functions on $\GL_n$ this is provided by Rankin-Selberg theory and known bounds on $L$-functions (see \S\ref{sec:intro:gl2}). Outside the context of
$\GL_n$, recent conjectures of Lapid-Mao \cite{Lapid-Mao:conj} are relevant.

\end{remark}

The constant $c(n)$ in Theorem \ref{general tau} arises from the representation of Whittaker functions as oscillatory integrals over $U$, see
\S\ref{non-deg-phase}. The ${\rm ht}(G)/2$ term can be thought of as the asymptotics of a half-density, while $-\dim(U)/2$ is square-root cancellation over $U$. The next subsections provide a deeper study of these oscillatory integrals, by examining the regimes where square-root cancellation fails (in which case the lower bound can be improved slightly) due to degeneracies. 

In a different context, it is interesting to mention~\cite[Corollary~12.4]{Bernstein-Krotz} which establishes lower bounds for matrix coefficients when the $K$-types vary.

\subsection{Lagrangian mappings associated with Whittaker functions}\label{sec:Lagrangian}
We return to the general setting of sup norms on Riemannian manifolds.

It is a general principle in semiclassical analysis (see \cite{SZ:maximal,Toth-Zelditch:uniformly-bounded}) and the theory of Fourier Integral Operators (see \cite{Hormander:FIO-I, Duistermaat:oscillatory}) that eigenfunctions which exhibit extremal $L^p$ growth, if they exist, should concentrate in phase space $T^*(M)$ along certain Lagrangian submanifolds $\Lambda$ which are invariant under the action of the underlying Hamiltonian dynamics. For example, the zonal spherical harmonics on the sphere saturating the $L^\infty$ bound \eqref{convex} concentrate on the meridian torus $\Lambda$ consisting of geodesics joining the poles (the antipodal points of the fixed rotation axis).
The zonal spherical harmonics achieve their largest values at the poles, which are precisely the singularities of the projection $\Lambda\rightarrow M$. 
See also~\cite[Prop.~4.4]{Do} for the study of certain related manifolds.

Similarly, a Whittaker function $W$, since it can be represented as an oscillatory integral (see e.g~\eqref{W-form2}), gives rise in \S\ref{Whitt-structures} to a Lagrangian submanifold $\Lambda$. We call ${\rm Im}(\Lambda\rightarrow U\backslash S)$ the {\it essential support} of the Whittaker function $W$. The singularities of the Lagrangian mapping $\Lambda\to U\backslash S$ produce large values of $W$. More generally, the singularities of the Lagrangian mapping $\Lambda\rightarrow U\backslash S$ induce a stratification of $\Lambda$ according to the degeneracy of the fibers. The type of degeneracy determines, via its singularity index that we discuss in \S\ref{s:singularities}, the corresponding bump in the asymptotics for the Whittaker function $W$.

There is a certain quantum integrable system whose eigenstates are the spherical Whittaker functions; see for example~\cite{Kost78}. The classical integrable system is the Toda lattice~\cite{Moser:integrable} which we take to be defined on the space $\mathcal{J}^*$ of linear functionals in $\mathfrak{p}^*$ vanishing on $[\mathfrak{u},\mathfrak{u}]$. Here $\mathfrak{p}$ is the tangent space at the origin in $S$ and $\mathfrak{u}$ is the Lie algebra of $U$. Let $\mathscr{L}\subset \mathcal{J}^*$ be the compact isospectral submanifold corresponding to the infinitesimal character of $W$. We review these structures in detail in \S\ref{KT-lattice}.

One of the tools we develop in this paper is an explicit description of $\Lambda\to U\backslash S$ for symmetric spaces $S$ associated with split semisimple real Lie groups $G$. We use in an essential way the symplectic reduction of the Hamiltonian action of $U$ on $T^*(S)$. See Theorem \ref{t:critical} for a more precise statement.

\begin{thm}\label{whitt supp} The Lagrangian $\Lambda$ of a spherical Whittaker function embeds as an open subset of the Toda isospectral manifold $\mathscr{L}$.
\end{thm}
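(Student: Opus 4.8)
The plan is to derive Theorem~\ref{whitt supp} as a consequence of the explicit description of the Lagrangian mapping $\Lambda\to U\backslash S$ established in Theorem~\ref{t:critical}, whose engine is the symplectic reduction of the Hamiltonian $U$-action on $T^*(S)$. The starting observation is that the natural phase space for $\psi$-equivariant functions on $S$ -- that is, for sections of the line bundle $\mathcal{L}_\psi$ over $U\backslash S$ -- is the twisted cotangent bundle $T^*_\psi(U\backslash S)$, and that the latter is canonically symplectomorphic to the Marsden--Weinstein reduction $\mu_U^{-1}(\psi)/U$, where $\mu_U\colon T^*(S)\to\mathfrak{u}^*$ is the moment map for the $U$-action and the non-degenerate character $\psi$ is regarded as a point of $\mathfrak{u}^*$. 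First I would make the identification of this reduced space with $\mathcal{J}^*$ explicit: both are symplectic of dimension $2\,\mathrm{rank}(G)$, and a Kostant-type slice (companion matrices realizing a prescribed spectrum) furnishes a symplectomorphism under which the principal symbols of $\mathscr{D}_G(S)$ -- the $G$-invariant functions on $T^*(S)$, which are $U$-invariant and Poisson-commute with $\mu_U$, hence descend to the reduction -- become the Toda Hamiltonians.

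Granting this dictionary, the inclusion $\Lambda\subseteq\mathscr{L}$ is formal. Writing the Jacquet integral with phase (schematically, up to the $\rho$-shift) $\Phi_g(u)=\langle\nu,H(w_0ug)\rangle-\ell_\psi(\log u)$, Theorem~\ref{t:critical} identifies $\Lambda$ with the image in $T^*_\psi(U\backslash S)$ of the critical manifold $C=\{(u,g):d_u\Phi_g=0\}$; but the equation $d_u\Phi_g=0$ is exactly the moment-map constraint that the associated point of $T^*(S)$ lie on $\mu_U^{-1}(\psi)$, so that its $U$-orbit is a point of $\mathcal{J}^*$. Since $W$ is a joint eigenfunction of $\mathscr{D}_G(S)$ of infinitesimal character $\nu$ and the symbols of these operators descend to the Toda Hamiltonians, this point lies on the joint level set $\{H_i=\nu_i\}$, which is by definition the isospectral manifold $\mathscr{L}$. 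Hence $\Lambda$ maps into $\mathscr{L}$.

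That this map is open is then a matter of dimensions, once it is known to be an injective immersion. One has $\dim(U\backslash S)=\mathrm{rank}(G)=:r$, so $\Lambda$ and $\mathscr{L}$ are Lagrangian submanifolds of the $2r$-dimensional $\mathcal{J}^*$ of the common dimension $r$, and an injective immersion between manifolds of equal dimension is an open embedding. The immersion property is built into Theorem~\ref{t:critical}: non-degeneracy of $\psi$ makes $C$ a clean (Morse--Bott) critical manifold on which the passage to $T^*_\psi(U\backslash S)$ is an immersion.

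The one point that genuinely requires work is injectivity. Here I would invoke the Bruhat stratification: the reduced datum attached to a critical point $(u,g)$ is read off from the Iwasawa decomposition of $w_0ug$ and therefore only involves the big cell, on which the passage to $\mu_U^{-1}(\psi)/U$ is a bijection onto its image -- this is the geometric shadow of Shalika's uniqueness of the Whittaker functional. Thus $\Lambda$ is precisely the image of the open Bruhat cell inside $\mathscr{L}$, and the complement $\mathscr{L}\setminus\Lambda$ is covered by the images of the lower cells; its nonemptiness is forced by the contrast between the compactness of $\mathscr{L}$ and the non-compactness of $\Lambda$, the latter inherited from the integration over the non-compact group $U$. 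The main obstacle, accordingly, is the clean verification that the critical datum is determined by its image on the big cell; the rest is bookkeeping around the reduction of Theorem~\ref{t:critical}.
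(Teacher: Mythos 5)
Your overall framework -- symplectic reduction of the Hamiltonian $U$-action on $T^*(S)$, moment maps, the Toda isospectral variety, and the role of the big Bruhat cell -- is precisely the one the paper uses, and paragraph two (the inclusion $\Lambda\subseteq\mathscr{L}$) is essentially Proposition~\ref{p:moment}, the directional-derivative computation via Lemma~\ref{lem:DKV}. But there are two real problems with the proposal as written.

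First, the argument is circular in places. Theorem~\ref{t:critical} is not a tool the paper proves independently and then uses to deduce Theorem~\ref{whitt supp}; it \emph{is} the precise form of Theorem~\ref{whitt supp}, and its proof is the content of \S\ref{tau n}. Invoking it three times (for the description of $\Lambda$ as a reduced critical manifold, for the immersion property, and for the passage to the twisted cotangent bundle) begs the question. A blind proof needs to supply the content of the commutative diagram~\eqref{commutative}, not appeal to it.

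Second, and more substantively, your route to openness -- injective immersion between equidimensional manifolds -- leaves the injectivity essentially unjustified. The sentence ``the passage to $\mu_U^{-1}(\psi)/U$ is a bijection onto its image -- this is the geometric shadow of Shalika's uniqueness'' is not an argument; uniqueness of Whittaker functionals is a representation-theoretic multiplicity-one statement and does not by itself yield a set-theoretic injectivity of a map between Lagrangian varieties. The correct input, which you gesture at but do not carry out, is Lemma~\ref{Bruhat-cor}: a point $s\in\mathrm{Ad}^*_K(\xi)$ whose restriction to $\mathfrak{u}$ is non-degenerate must lie in the image of the big cell $S_\mathsf{w}^+$, so that (Lemma~\ref{lem:dkappa}) the parametrizing $u\in U$ is unique. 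This is exactly what makes the converse inclusion in Proposition~\ref{p:moment} work, and hence what makes the left square of~\eqref{commutative} Cartesian. The paper then avoids the delicate Lagrangian-to-Lagrangian injectivity altogether: it shows directly, by inspecting equation~\eqref{moment}, that the \emph{ambient} map $M_1=U\backslash\!\backslash_1 T^*(S)\to\mathcal{J}^*$ is an open embedding (with image the tridiagonal matrices of positive off-diagonal entries -- so ``canonically symplectomorphic to $\mathcal{J}^*$'' in your first paragraph is slightly off; it is a symplectomorphism onto that open subset only), and then the Cartesian property transfers the open embedding to $\Lambda_\nu^{\rm red}\to\mathscr{L}_\nu$ for free. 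That is the cleaner mechanism, and it is the piece your proposal is missing.
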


The complement of the essential support ${\rm Im}(\Lambda\rightarrow U\backslash S)$ describes the classically forbidden region of the Toda flow. The corresponding quantum eigenstates -- the Whittaker functions -- then decay rapidly in this region, as we shall establish in \S\ref{s:decay-regime}. So while the archimedean Whittaker functions are not of compact support, the essential support provides a substitute. This is parallel to the theory of Fourier Integral Operators, where we could view $W$ as a distribution whose microlocal support is the Lagrangian $\Lambda$. From the above description of $\mathscr{L}$ as an isospectral variety, we may immediately deduce from Theorem \ref{whitt supp} that all the simple roots evaluated at an element in ${\rm Im}(\Lambda\rightarrow U\backslash S)$ have size at most $\sqrt{\lambda}$. Information of this sort is a crucial input for the proof of Theorem \ref{general tau}.

\subsection{Applications to $\GL_3$}\label{sec:GL3sing} 

Reduction theory allows us convert the rapid decay of $W$ into that of (generic) 
Hecke--Maass cusp forms. We carry this out for $\PGL_3$ and thereby quantify the 
threshold distance into the cusp beyond which a cusp form on $\Gamma\backslash S_3$ must 
decay rapidly. Then, by truncating $\Gamma\backslash S_3$ at this threshold, we can 
quickly establish polynomial {\it upper} bounds on the sup norm. We obtain the following 
sample result, see Remark \ref{rem:upper3} for a discussion of why we have limited the 
scope to $\PGL_3$.
\begin{prop}\label{ess supp} In a Siegel domain, any Hecke--Maass cusp form $f$ on $\Gamma\backslash S_3$ with Laplacian eigenvalue $\lambda$ decays rapidly at height greater than $\lambda^{1+\epsilon}$. Moreover,
\begin{equation}\label{bad-upper}
\pnorm{f}_\infty\ll_\epsilon \lambda^{5/2+\epsilon}\pnorm{f}_2.
\end{equation}
\end{prop}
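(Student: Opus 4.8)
The plan is to combine the rapid decay of Jacquet--Whittaker functions (a consequence of Theorem \ref{whitt supp}) with the Fourier--Whittaker expansion of a cusp form on $\GL_3$, and then to truncate the Siegel domain at the resulting threshold height. First I would recall the Fourier--Whittaker expansion: a Hecke cusp form $f$ on $\Gamma\backslash S_3$ decomposes, along the unipotent radical $U$, as a sum over $\GL_2(\Z)$-cosets and positive integers of Hecke eigenvalues times Jacquet--Whittaker functions $W$ evaluated at a translate of $g$. The point is that the essential support of $W$, by Theorem \ref{whitt supp}, is governed by the Toda isospectral variety $\mathscr{L}$: all simple roots of an element in $\mathrm{Im}(\Lambda\to U\backslash S)$ have size $\ll\sqrt\lambda$. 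Outside this region $W$ decays rapidly (faster than any negative power of the distance past the threshold), since it is the quantum eigenstate associated with the classically forbidden region of the Toda flow.

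Next I would translate this into a statement about the cusp form itself. In a Siegel domain for $\Gamma\backslash S_3$, write $g$ in Iwasawa coordinates with torus part $y = \mathrm{diag}(y_1 y_2, y_1, 1)$ (up to the center), so that the two simple roots evaluate to $y_1$ and $y_2$; the height is governed by these. The Fourier--Whittaker expansion, combined with the constraint from Theorem \ref{whitt supp} on the essential support of each $W$, forces every term in the expansion to be negligible once $\min(y_1,y_2)$ — equivalently, once the height — exceeds a constant multiple of $\lambda$. Here one uses that the Hecke eigenvalues are polynomially bounded (Rankin--Selberg / known bounds toward Ramanujan suffice, since any polynomial bound is absorbed by the rapid decay) and that the number of relevant cosets up to a given height is polynomially controlled, so the sum over the expansion converges and inherits the rapid decay of its terms. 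This proves the first assertion: $f$ decays rapidly at height greater than $\lambda$.

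For the sup-norm bound \eqref{bad-upper}, I would split the Siegel domain into the bulk region (height $\le C\lambda$) and the deep cusp (height $> C\lambda$). On the deep cusp, the rapid decay just established makes $|f|$ negligible compared to any power of $\lambda$. On the bulk region, which is a bounded-geometry piece of $\Gamma\backslash S_3$ of diameter $O(\log\lambda)$ in the relevant sense but more precisely a region where the injectivity radius is bounded below in terms of $\lambda$, I would invoke the local H\"ormander-type bound \eqref{convex}: over a fixed compact set the sup norm is $\ll \lambda^{(d-1)/4}\pnorm{f}_2$ with $d=\dim S_3 = 5$, giving exponent $(d-1)/4 = 1$; the passage to the bulk region of the Siegel domain, whose volume and covering number grow polynomially in $\lambda$, then costs at most a further polynomial factor in $\lambda$, and one checks the total comes out to $\lambda^{5/2}$. (The bound is deliberately crude — the exponent $5/2$ is not claimed optimal.)

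The main obstacle I expect is the second paragraph: making the heuristic "each Whittaker term is negligible past height $\lambda$, hence so is $f$" into a rigorous uniform statement. One must control (i) the rate of rapid decay of $W$ uniformly in $\lambda$ and in the distance past the threshold — this requires the quantitative version of Theorem \ref{whitt supp}, i.e.\ genuine exponential-type bounds in the classically forbidden region rather than just the support statement; (ii) the interchange of the (infinite) Fourier--Whittaker sum with these estimates, which needs the polynomial bound on Hecke eigenvalues and a count of lattice cosets intersecting the relevant height range; and (iii) the fact that, in the Siegel domain for $\GL_3$, ``height $> \lambda$'' really does push all the simple roots appearing in every term of the expansion outside the essential support of the corresponding $W$ — this uses the specific shape of the Iwasawa decomposition and the action of the integer matrices scaling the Whittaker argument. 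Once these are in place the sup-norm bound is a routine bookkeeping exercise.
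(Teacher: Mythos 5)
Your plan for the rapid-decay half tracks the paper's at a high level: expand $f$ in its Fourier--Whittaker series, use the essential-support information from Theorem~\ref{whitt supp}, and check that every term is evaluated far enough into the shadow zone. You list ``the specific shape of the Iwasawa decomposition and the action of the integer matrices scaling the Whittaker argument'' as something to verify, but this is precisely the nontrivial step and you do not address it. A translate $\left(\begin{smallmatrix}\gamma & \\ & 1\end{smallmatrix}\right)g$ for $\gamma\in\GL_2(\Z)$ could, a priori, have an Iwasawa $A$-part with all roots bounded even when $g$ itself has a large root; if so, that term of the expansion would sit in the light zone of the Whittaker function and not decay. The paper isolates this as Lemma~\ref{lem:reduction}, $\max(y_1',y_2')\gg\max(y_1,y_2)$, proved by a Bruhat case analysis in $\GL_2(\Z)$ that uses the integrality of $\gamma$. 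This is not a formality: Remark~\ref{rem:upper3} exhibits a counterexample for $n=4$, and that failure is exactly why the proposition is restricted to $\GL_3$. You also need to handle the tail of the $[\gamma]$-sum (cosets of large height) by a separate argument where the decay comes from $\gamma$ itself rather than from the lemma; the paper splits the sum into two ranges for this reason. An argument that defers these points as bookkeeping is incomplete precisely at the places where the $n=3$ hypothesis enters.

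For the sup-norm bound you take a genuinely different route, and it has a gap. You invoke H\"ormander's bound \eqref{convex}, giving exponent $(d-1)/4=1$ for $d=5$, and then assert that passing to the bulk region ``costs at most a further polynomial factor in $\lambda$'' from ``volume and covering number,'' so that ``one checks the total comes out to $\lambda^{5/2}$.'' The extra factor has nothing to do with volume or covering number, and \eqref{convex} does not come with a quantified dependence on the shrinking injectivity radius that would let you extract it; as written, your argument does not determine the exponent. The paper uses a different mechanism: Sarnak's spherical-function estimate \eqref{uniform Sarnak}, where the dependence on the ball radius $R$ is explicit. One shows that the injectivity radius on $(\Gamma\backslash S)^{\leq T}$ is $\gg 1/T^2$, takes $T=\sqrt\lambda$ so that $R=1/\lambda$ is admissible, and uses that on such sub-wavelength balls $\int_B|\omega_\lambda|^2\asymp R^d$ by \eqref{eq:non-osc}. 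Then \eqref{uniform Sarnak} gives $\max_{B(p,1/\lambda)}|f|\ll R^{-d/2}\pnorm{f}_2=\lambda^{d/2}\pnorm{f}_2=\lambda^{5/2}\pnorm{f}_2$. The exponent $5/2$ arises directly as $d/2$ at the minimum injectivity radius; it is not $(d-1)/4$ corrected by a geometric fudge factor, and without the ingredients above (the injectivity-radius lower bound on the truncation and the non-oscillatory estimate for the spherical function on sub-wavelength balls) your computation does not close.
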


We now turn to a refinement of Theorem \ref{th:gln} for $\PGL_3$. As was mentioned at the 
end of \S\ref{intro:Whit}, despite the surprising large exponent $c(n)$
for large $n$, the proof of Theorem \ref{th:gln} does not take into account possible singularities of the underlying oscillatory integrals of Whittaker functions.
We consider again the Lagrangian submanifold $\Lambda$ associated to a self-dual 
spherical Whittaker function for $n=3$.

\begin{thm}[Theorem~\ref{prop:CRIT}]\label{n=3 tau} The Lagrangian mapping $\Lambda\rightarrow U\backslash S_3$ induces a stratification
\[
\Lambda^{(0)}\subset \Lambda^{(1)}\subset \Lambda^{(2)}=\Lambda
\]
where $\Lambda^{(i)}$ is a closed submanifold of dimension $i$.
Here the most singular stratum $\Lambda^{(0)}$ consists of two points of type 
$A_3$ singularity, moreover
\[
\Lambda^{(1)} - \Lambda^{(0)}:=\{x\in\Lambda: x\text{ is a type } A_{2} \text{ singularity}\},
\]
and $\Lambda - \Lambda^{(1)}$ is the open dense submanifold of regular points.
\end{thm}

In particular $\Lambda \to U \backslash S_3$ contains two \emph{Whitney pleats} in the 
neighborhood of $\Lambda^{(0)}$.  We refer to \cite{AV} for background on
singularity theory and \S\ref{num-inv} for a brief summary.
We establish an analogous stratification for the Toda isospectral manifold 
$\mathscr{L}\rightarrow \mathfrak{u}_{\rm ab}^*$ in \S\ref{sec:fine-phase} which is 
easier to work with. Then we use Theorem~\ref{whitt supp} to deduce the result for 
$\Lambda \rightarrow U\backslash S_3$.

The above Theorem~\ref{n=3 tau} allows us
to improve the lower bound of Theorem \ref{general tau} for $n=3$, using the method of normal forms of degenerate phase functions. This kind of analysis goes back to~\cite{Berry:waves-Thom}, where each generic 
singularity of corank $1$ and $2$ is studied.
\begin{thm}\label{cor:Pearcey} For any non-zero Whittaker function $W$ on $S_3$ as above, 
with self-dual infinitesimal character and Laplacian eigenvalue $\lambda$, we have
\begin{equation}\label{Pearcey-bump}
\pnorm{W}_\infty\gg \lambda^{3/8}\pnorm{W}_2,
\end{equation}
\end{thm}

We return to the existence of extremal eigenfunctions on compact Riemannian manifolds. For $M$ of negative curvature, one does not expect strong localisation behavior along Lagrangian submanifolds in phase space. For example, the quantum ergodicity theorem establishes the existence of a density $1$ subsequence of an orthonormal basis of eigenfunctions for $L^2(M)$ which do not localise on any proper subvariety of $T^*(M)$. 

Nevertheless, for non-compact Riemannian manifolds, there is a sense in which this 
non-localisation feature of negative curvature asymptotically fails near infinity. 
Heuristically, if a cusp form $f$ on $\Gamma\backslash S$ is well-approximated by its 
Fourier-Whittaker expansion, then $f$ localizes where $W$ does. This is made rigorous 
using the method of~\cite{Temp:p-adic}. The large values of $W$ in Theorem 
\ref{cor:Pearcey} created by their localization along $\Lambda$ then transfer to those of 
$f$.

\begin{cor}\label{cor:38} For any Hecke--Maass cusp form $f$ on $\Gamma\backslash S_3$, with self-dual infinitesimal character and Laplacian eigenvalue $\lambda$, we have
\begin{equation*}
\pnorm{f}_\infty\gg_\varepsilon \lambda^{3/8-\varepsilon}\pnorm{f}_2.
\end{equation*}
\end{cor}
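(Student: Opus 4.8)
The plan is to deduce Corollary~\ref{cor:38} from Corollary~\ref{cor:Pearcey} by the same Fourier--Whittaker transfer principle used in the proof of Theorem~\ref{th:gln}, but now applied to the self-dual case on $\GL_3$ where the sharper exponent $3/8$ is available. First I would recall that a Hecke cusp form $f$ on $\Gamma\backslash S_3$ is generic, so it admits a non-degenerate Fourier--Whittaker expansion along $U$; picking out a single non-degenerate Fourier coefficient expresses $f$ (restricted to a suitable ray into the cusp) essentially as a sum over $\Gamma\cap U\backslash\Gamma/\ldots$ of translates of the Jacquet--Whittaker function $W$ attached to the infinitesimal character of $f$, weighted by the corresponding Hecke eigenvalues $\lambda_f(\mathfrak{m})$. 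Since $f$ is self-dual, the relevant $W$ is a self-dual Whittaker function on $S_3$, which is exactly the object to which Corollary~\ref{cor:Pearcey} applies.

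The main analytic input is then a lower bound for $f$ at (or near) the point in the cusp where $W$ achieves its large value $\gg\lambda^{3/8}\pnorm{W}_2$, coming from the $A_3$ (Pearcey) degeneracy located, by Theorem~\ref{n=3 tau}, on a single fiber of $\Lambda\to U\backslash S$. The strategy is: (i) evaluate the truncated Fourier--Whittaker expansion of $f$ at a point $g$ whose height places it in the essential support near that distinguished fiber, so that the leading term is the contribution of the trivial (or a fixed) coset carrying the full Pearcey bump of $W$; (ii) control the tail, i.e. the contributions of the remaining cosets together with the non-generic part, using the rapid decay of $W$ outside its essential support (a consequence of Theorem~\ref{whitt supp}, which bounds simple roots of points in $\mathrm{Im}(\Lambda\to U\backslash S)$ by $\sqrt\lambda$) and the Hecke relations / Rankin--Selberg bounds that tie the $L^2$-normalisation of $f$ to that of $W$; (iii) combine (i) and (ii) to obtain $|f(g)|\gg_\varepsilon \lambda^{3/8-\varepsilon}\pnorm{f}_2$, whence $\pnorm{f}_\infty\gg_\varepsilon\lambda^{3/8-\varepsilon}\pnorm{f}_2$. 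The $\varepsilon$ loss, absent in Corollary~\ref{cor:Pearcey}, enters precisely through the comparison of normalisations and the handling of the arithmetic sum over cosets (bounds on $L$-functions, counting lattice points in the Siegel domain, etc.), exactly as in Theorem~\ref{th:gln}.

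The step I expect to be the main obstacle is (ii): showing that when $f$ is evaluated at the special point dictated by the $A_3$ stratum, no destructive interference occurs among the translates of $W$ and the leading Pearcey term genuinely dominates. Because the large value of $W$ here is concentrated on a \emph{single fiber} and is of a delicate oscillatory nature (governed by the Pearcey integral rather than by a monotone profile), one must argue that the other terms in the expansion are evaluated well inside the classically forbidden region of the Toda flow and hence are negligible — this is where the precise geometry of $\Lambda\to U\backslash S$ from Theorem~\ref{n=3 tau}, and in particular the fact that $\Lambda^{(3)}$ consists of two points on one fiber, is used. Once this separation of the dominant term is established, the remainder of the argument is the by-now standard reduction-theoretic bookkeeping, so I would present (ii) in detail and treat (i) and (iii) more briefly, referring back to the proof of Theorem~\ref{th:gln}.
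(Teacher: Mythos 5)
Your proposal takes a genuinely different (and substantially harder) route than the paper, and the difficulty you yourself flag in step (ii) is exactly the analysis the paper's argument is designed to avoid.

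The paper does not evaluate the full Fourier--Whittaker expansion of $f$ at the Pearcey point and then struggle to show a single term dominates. Instead it uses the same reduction as for Theorem~\ref{th:gln} (see \S\ref{reduction-step} and \S\ref{sec:intro:outline}): one integrates $f$ against $\overline{\psi}$ over the \emph{compact} cycle $\Gamma_U\backslash U$ to form the global Whittaker period
\[
W_f(g)=\int_{\Gamma_U\backslash U} f(ug)\,\overline{\psi(u)}\,du,
\]
and since the cycle has finite volume one gets for free that $\pnorm{f}_\infty \geq \mathrm{vol}(\Gamma_U\backslash U)^{-1}\pnorm{W_f}_\infty$. Multiplicity one of the spherical Whittaker model shows $W_f$ is a nonzero multiple of the Jacquet--Whittaker function $W_\nu$, so $\pnorm{W_f}_\infty/\pnorm{W_f}_2$ is a purely local quantity; the Rankin--Selberg unfolding together with Li's bound on $\mathrm{Res}_{s=1}L(s,\pi\times\tilde\pi)$ gives $\pnorm{f}_2 \ll_\varepsilon \lambda^\varepsilon\pnorm{W_f}_2$. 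Plugging in the local lower bound of Corollary~\ref{cor:Pearcey} in place of Theorem~\ref{general tau} then yields Corollary~\ref{cor:38} at once. No sum over cosets, no rapid-decay tail estimate, and no interference argument is needed; the only place the $\lambda^{-\varepsilon}$ enters is the $L$-function bound.

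Your proposal, by contrast, proposes to evaluate the full expansion $f(g)=\sum_m\sum_{[\gamma]}\rho_f(m)W_\nu(d_m\gamma g)$ at a point in the Pearcey regime and to show the ``leading'' coset carries the bump while all other terms are negligible. This is possible in principle (the shift into the shadow zone does kill the other terms, by the sort of reduction-theoretic analysis carried out for Proposition~\ref{ess supp}), but it requires precisely the reduction-theory and non-interference bookkeeping that you identify as the main obstacle, and in addition it requires a lower bound on $|\rho_f(1,1)|$ relative to $\pnorm{f}_2$, which again comes from Rankin--Selberg. You end up needing all the same inputs plus the tail control. There is also a small conflation in your first paragraph: taking a single Fourier coefficient produces the period $W_f$ directly (not a sum over cosets); the sum over cosets is the full expansion of $f$ itself. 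The paper works with the former and thereby transfers the lower bound from $W$ to $f$ with one inequality.
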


For $\PGL_2$, the Whittaker functions $W$ on the Poincar\'e upper-half plane $S_2$ are 
expressed in terms of $K$-Bessel functions, and classical estimates in the transition 
range~\cite{Ba} imply that 
\begin{equation}\label{Bessel-bump}
\pnorm{W}_\infty\gg \lambda^{1/12}\pnorm{W}_2.
\end{equation}
As a consequence we recover the result of~\cite{SaMo} on the growth of Hecke--Maass cusp forms on the modular surface $\SL_2(\Z)\backslash S_2$. 
The above results are the appropriate generalizations to $\PGL_3$. Observe that \eqref{Pearcey-bump} and \eqref{Bessel-bump} may be rewritten as
\begin{equation}\label{intro-A2}
\pnorm{W}_\infty\gg \sqrt{\lambda}^{c(n)+\beta_n}\pnorm{W}_2 
\end{equation}
for $n=2$ and $3$, where $\beta_n$ is the singularity index of the Lagrangian mapping $\Lambda \to U\backslash S_n$, and $\beta_2=\frac16$, $\beta_3=\frac14$. 
The $A_2$ singularity for the
$\GL_2$ Whittaker function, or equivalently $K$-Bessel function, arises from the 
``turning point" of the projection of the circle $\mathscr{L}$ centered at the origin in 
$\mathfrak{p}^*$
of radius $\sqrt{\lambda-1/4}$ onto the $\mathfrak{u}^*$-axis. At the fold, the $K$-Bessel function is modelled by the Airy function, the prototypical example of a function exhibiting a transition from an oscillatory to a decay regime. 
Several natural thresholds encountered in analytic number theory, 
especially problems having to do with the bounding of periods such as in the work of 
Bernstein and Reznikov~\cite{BeRe}, are directly related to this transition behavior of 
the Airy function. Similarly, the Pearcey function is associated with $A_3$ singularities 
as explicated by Berry~\cite{Berry:waves-Thom}, and we have shown that it models the peak 
behavior of $\GL_3$ Whittaker functions, see \S\ref{LOWER-GL3} for details.

Finally, we remark that the existence of tempered $\PGL_3$ cusp forms satisfying the 
self-dual condition at infinity of Corollary \ref{cor:38} can be seen by taking symmetric 
square lifts (and character twists thereof) of tempered $\PGL_2$ Hecke--Maass cusp forms. 
The restriction to such $f$ should be unnecessary and we have assumed it solely to 
simplify certain local calculations. Note that locally self-dual at infinity does not 
imply globally self-dual, as for example is shown by twisting a globally self-dual form 
by a non-quadratic Dirichlet character. 

\subsection*{Acknowledgments}  We would like to thank Michael Berry, Erez Lapid, Elon 
Lindenstrauss, Simon Marshall, Philippe Michel, and Andre Reznikov for helpful
discussions. We thank the referees for many constructive comments that improved the quality of the paper.
Some of the results of this paper were first announced at the Oberwolfach workshop $1135$ on the analytic theory of automorphic forms and further presented at various other meetings, e.g. the Banff workshop on Whittaker functions and Physics, and the 17th Midrasha Mathematicae at Jerusalem in honor of Peter Sarnak. We thank the organizers for these invitations and the participants for their helpful comments.

\section{Outline of proofs}\label{proof-sketch}

We now provide a brief outline of the proofs of the results stated in the introduction. For the reader's benefit, we follow the same subsection structure of the introduction. 

\subsection{Proof sketch of results in \S\ref{sub:firstmain}}

\subsubsection{Reduction of Theorem \ref{th:gln} to Theorem \ref{general tau}}\label{reduction-step}

The proof of Theorem \ref{th:gln} begins in \S\ref{sec:intro:gl2} by considering the integral of the cusp form $f$ over a closed unipotent orbit against a non-degenerate character. We obtain in this way the global Whittaker function
\[
W_f(g)=\int_{(\Gamma\cap U)\backslash U} f(ug)\overline{\psi (u)}du.
\]
Since the cycle $(\Gamma\cap U)\backslash U$ is compact, we can deduce lower bounds for $f$ from those of $W_f$.

The multiplicity one of the spherical Whittaker space and convexity bounds on {Rankin--Selberg} $L$-functions then allow us to replace $W_f$ by the Jacquet-Whittaker function $W_\nu$. As the notation suggests, this latter function is of purely local nature: it sees only the infinitesimal character $\nu$ but not the global automorphic form $f$. This then reduces the proof of Theorem \ref{th:gln} to Theorem \ref{general tau}.

\subsubsection{Sketch of proof of Theorem \ref{general tau} for $\PGL_n(\R)$}
In the special case of $G=\PGL_n(\R)$ we may prove Theorem \ref{general tau} as follows 
(see \S\ref{GLn-argument} for details). We consider the zeta integral
\[
\Psi(s,W_\nu,\overline{W_\nu})=\Gamma_\R(ns)\int_{U_{n-1}\backslash \GL_{n-1}(\R)}
\left|W_\nu
\begin{pmatrix}
g & 0 \\
0 & 1
\end{pmatrix}
\right|^2 \abs{\det(g)}^{s-1}d\dot g,
\]
where $d\dot g$ is a quotient of normalized Haar measures on $U_{n-1}\backslash 
\GL_{n-1}(\R)$. 
Let $T_{n-1}$ be the torus consisting of positive diagonal matrices in $\GL_{n-1}(\R)$, 
embedded in $\GL_n(\R)$ as above.
Measure identifications and transformation properties of $W_\nu$ allow one to write
\[
\Psi(s,W_\nu,\overline{W_\nu})=
\Gamma_\R(ns)\int_{T_{n-1}} \left|W_\nu
\begin{pmatrix}
a & 0 \\
0 & 1
\end{pmatrix}
\right|^2 \det(a)^{s-2}\delta(a)^{-1}da,
\]
 up to non-zero absolute constant depending on volume normalization, and the Stade formula (see~\eqref{clean stade}) states that the above integral is equal to the local Rankin-Selberg $L$-function
\[
L_\R(s,\pi_\nu\times\tilde\pi_\nu)/L_\R(1,\pi_\nu\times\tilde\pi_\nu).
\]
Specializing to $s=1$ we obtain the $L^2$-norm squared of $W_\nu$, and we see that it is normalized to be equal to $1$. The idea is to take ${\rm Re}(s)$ large which puts a greater weight on the region where $W_\nu$ has large values, and comparing this with the volume of the region will yield the bound of Theorem~\ref{general tau}. 

Carrying out this strategy, we see from Stirling's formula that $\Psi(\sigma,W_{t\nu},\overline{W_{t\nu}})$ has size $t^{(\sigma-1)\dim U}$ as $t\to \infty$. On the other hand, Theorem \ref{whitt supp} implies that $\Psi(\sigma,W_{t\nu},\overline{W_{t\nu}})$ is majorized by
\[
\max_{a\in A} |W_{t\nu}(a)|^2\int_{{\rm Im}(\Lambda_{t\nu}\rightarrow U\backslash S)} 
\det(a)^{\sigma-2} \delta(a)^{-1}da.
\]
For $\sigma>n-1$, the integral converges to a constant times $t^{(\sigma-1)\dim U-c(n)}$. We deduce the bound $\max_{a\in A} |W_{t\nu}(a)|^2 \gg t^{c(n)}$, as desired.

\subsubsection{Sketch of proof of Theorem \ref{general tau} for general $G$}

A spherical Whittaker function with infinitesimal character $\nu$ is a constant multiple of the oscillatory integral
\begin{equation}\label{W-form2}
W_\nu(a)=\delta(a)^{1/2}\nu(\mathsf{w}a\mathsf{w})\int_U \delta(\mathsf{w} u)^{1/2}e^{i(B(H_\nu, 
H({\sf w}u))-\langle\ell_1,aua^{-1}\rangle)}du,
\end{equation}
where $a\in A$. See \S\ref{sec:f} for the notation used in the above expression, and 
Lemma~\ref{c-o-v}. The size of the $\delta(a)^{1/2}$ factor is easy to determine; that of 
the 
oscillatory integral is more subtle, for the phase function depends on both parameters 
$\nu$ and $a$.

The method of stationary phase states that the asymptotic of this integral is determined by the critical set of the phase function $B(H_\nu, H({\sf
w}u))-\ell_1(aua^{-1})$ measuring the interaction of the Iwasawa projection $H({\sf w}u)$ (tested by $\nu$) with characters $u\mapsto e^{i\ell_1(aua^{-1})}$. If there are no critical points, then the integral decays rapidly, overwhelming the polynomial growth of $\delta(a)^{1/2}$. If there do
exist critical points, then the asymptotic size of the above integral is governed by local contributions around each one. A non-degenerate critical point makes a
contribution $t^{-\dim U/2}$ to the size of $W_{t\nu}(ta)$. A degenerate critical point will make a larger contribution, of size $t^{-\dim U/2 +\beta}$ for a certain rational number $\beta$ which is a numerical invariant of the degeneracy.

To prove Theorem \ref{general tau} we show in \S\ref{non-deg-phase} that for every $\nu$ there exists $a$ such that the above phase function admits critical points whose local contributions do not cancel. For this, we adapt the method of H\"ormander~\cite{Hormander:FIO-I,Duistermaat:oscillatory} in the theory of Fourier integral operators
as follows. To obtain upper bound estimates for the operators, the symbol is traditionally chosen to be transverse to the Lagrangian $\Lambda$. However
for our purpose of establishing a lower bound we make the opposite choice of a symbol which is tangent to $\Lambda$, and then the modified phase $B(H_\nu, H({\sf
w}u))-\ell_1(aua^{-1})- \langle\xi,a\rangle$ is a Morse-Bott function in the variables $(a,u)$. This produces a {\it lower bound} (not necessarily sharp, since at degeneracies the lower bound could be stronger) on the oscillatory integral of size $t^{-\dim U/2}$. When the size of half-density $\delta(a)^{1/2}$ is taken into account, this yields the exponent $c(G)$.

\subsection{Theorem \ref{whitt supp} and the method of co-adjoint orbits}\label{sub:coadjoint}
We now give an intuitive explanation for why one should expect to realize the Whittaker Lagrangian $\Lambda_\nu$ in $\mathscr{L}_\nu$, as stated in Theorem \ref{whitt supp}. Our inspiration is the geometric setting of the {\it method of co-adjoint orbits}.

Consider the action of $G$ on the space of linear functionals $\g^*$ given by the co-adjoint action. For $g\in G$ and $\lambda\in\g^*$ this is defined as $\Ad_g^*\lambda=\lambda\circ\Ad_{g^{-1}}$, where $\Ad: G\rightarrow {\rm Aut}(\mathfrak{g})$ is the adjoint representation. The orbits under this action are endowed with a natural $G$-invariant symplectic form, which at a point $\lambda$ is given by the formula $\Omega_\lambda(X,Y)=-\lambda([X,Y])$. The action of $G$ on an orbit $\mathscr{O}$ is Hamiltonian with corresponding moment map the inclusion $\Phi_G: \mathscr{O}\hookrightarrow\g^*$. 

We are particularly interested in co-adjoint orbits attached to $\xi\in\mathfrak{p}^*$. A 
natural way of obtaining them is to first consider the cotangent bundle $T^*(S)$. This 
receives a $G$-action inherited from the natural $G$-action on $S$ by isometries. We make 
the equivariant identification $T^*(S)=G\times_K\mathfrak{p}^*$ under which the moment 
map $T^*(S)\rightarrow\g^*$ for the $G$-action is described by 
$[g,\xi]\mapsto\Ad_g^*\xi$, see \S\ref{tau n}. Then the image of any $G$-orbit in 
$T^*(S)$ is a coadjoint orbit in $\g^*$ associated with some $\xi\in\mathfrak{p}^*$. 

The method of co-adjoint orbits states~\cite{Rossmann:representations-orbits} that, in favorable circumstances, irreducible unitary representations $\pi$ of $G$ will be in finite-to-one correspondence with co-adjoint orbits $\mathscr{O}$. The association of a unitary representation with the Hamiltonian system of the symplectic $G$-manifold $\mathscr{O}$ is referred to as {\it geometric quantization}. Moreover, operations in the unitary dual (e.g. induction, restriction) should correspond to operations on corresponding orbits (e.g. intersection, projection). Other parallels exist; for example, the uncertainty principle is expressed in this set-up as a correspondence between the vectors in the unitary representation $\pi$ and balls of unit volume in the co-adjoint orbit $\mathscr{O}=\mathscr{O}_\pi$. 

Of greatest interest to us is the following situation. For a connected subgroup $K$ of $G$ with Lie algebra $\mathfrak{k}$, the level sets of the corresponding moment map $\Phi_K:\mathscr{O}\hookrightarrow\g^*\rightarrow\mathfrak{k}^*$ should in principle correspond to phase states with quantities conserved by $K$. For example, taking $K$ to be a maximal compact subgroup, spherical Whittaker functions $W$ are associated with $K$-fixed vectors of irreducible unitary unramified representations of $G$. From the tempered hypothesis on $W$, these representations are obtained by induction from some $\nu\in i\mathfrak{a}^*\subset i\mathfrak{p}^*$. Letting $\mathscr{O}$ be the coadjoint orbit of ${\rm Im}(\nu)\in\mathfrak{p}^*$, isolating $\Phi_K^{-1}(0)$ in $\mathscr{O}$ then corresponds to picking out $K$-fixed vectors in $\pi$.

Furthermore, given two subgroups $U,K<G$, one can hope to understand the $U$-isotypic distribution of a $K$-fixed vector in $\pi$ via the projection map $\Phi_K^{-1}(0)\rightarrow\mathfrak{u}^*$. Theorem \ref{whitt supp} carries out this yoga for $K$ a maximal compact subgroup of $G$ and $U$ the unipotent radical of a Borel.

On one hand, the Toda isospectral manifold $\mathscr{L}_\nu$ is the intersection $\Phi_K^{-1}(0)\cap\Phi_{U_{\rm der}}^{-1}(0)$ in the coadjoint orbit $\mathscr{O}$ (see \S\ref{sub:reduction}), where $U_{\rm der}=[U,U]$ is the commutator subgroup. This intersection then admits a Lagrangian mapping to $\mathfrak{u}_{\rm ab}^*$, with $\mathfrak{u}_{\rm ab}^*$ denoting the characters of $\mathfrak{u}$ vanishing on $\mathfrak{u}_{\rm der}$. When $G=\GL_2(\R)$, for example, one obtains the projection from the circle of radius $\xi$ to the $\mathfrak{u}^*$-axis. On the other, the Whittaker Lagrangian $\Lambda_\nu$ admits a similar description with respect to the moment maps arising from the natural $G$-action on the cotangent bundle $T^*S\rightarrow S$ (see Proposition \ref{p:moment}). Reducing $\Lambda_\nu$ by the $U$-action then defines an open embedding from $\Lambda^{\rm red}_\nu\rightarrow U\backslash S$ into $\mathscr{L}_\nu\rightarrow\mathfrak{u}_{\rm ab}^*$. This is the statement of Theorem \ref{t:critical}, which makes more precise Theorem \ref{whitt supp} from the introduction.

\subsection{Proof sketch of results in \S\ref{sec:GL3sing}}

\subsubsection{Sketch of proof of Proposition \ref{ess supp}}
To establish the rapid decay of $f$ high in the cusp, one first expands $f$ in its Fourier-Whittaker expansion, see \S\ref{sec:ess sup}. One must then check that every term in the expansion is itself evaluated high enough into the cusp for the decay estimates of Theorem \ref{whitt supp} to apply; this is an exercise in reduction theory, which we carry out for $\GL_3(\R)$. In this way, the decay estimate on Whittaker functions of Theorem \ref{whitt supp} transfers, at least for $n=3$, to the cusp form $f$.

To deduce an upper bound on the sup norm of $f$ from a quantitative estimate of its essential support, we argue as follows, see \eqref{uniform Sarnak} for details. First recall a result of Sarnak \cite{SaMo} which states that a cusp form $f$ of eigenvalue $\lambda$ on a compact locally symmetric space of dimension $d$ and rank $r$ satisfies
\begin{equation}\label{d-r-omega}
	\pnorm{f}_\infty\ll\lambda^{(d-r)/4}\pnorm{f}_2.
\end{equation}
In fact, this holds for non-compact locally symmetric spaces as well, as long as one restricts to nice enough bounded subsets, such as geodesic balls. The key is that the quantitative dependence of the implied constant on the injectivity radius in \eqref{d-r-omega} is rather easy to explicate. So we simply go through Sarnak's proof of \eqref{d-r-omega} on the \emph{truncation} of $\Gamma\backslash S_3$ to the essential support of $f$, since it has positive calculable global injectivity radius. 

\subsubsection{Proof sketch of Theorem \ref{n=3 tau} and Corollary \ref{cor:38}}

The description of $\Lambda_\nu$ given in Theorem \ref{whitt supp} is convenient for computations: roughly speaking, the equations defining the fiber over $a\in U\backslash S=A$ are the tridiagonal symmetric matrices with off-diagonals the positive simple roots of $a$ and characteristic polynomial agreeing with that of $\nu$. For $G=\PGL_3(\R)$ and $\nu$ self-dual, this boils down to the following problem.

{\it Let $t>1$. Let $\mathcal{J}$ denote the real tridiagonal symmetric $3\times 3$ matrices. Determine the intersection configuration of the solutions $s\in\mathcal{J}$ having fixed non-zero off-diagonal entries to the cubic equation $\det(s)=0$ and the quadratic equation $\pnorm{s}=t^2$.}

The \S\ref{sec:fine-phase} is dedicated to the solution of this problem. In particular, the $A_3$ singularities are created when the two equations have two intersection points, both with multiplicity 3. Stationary phase asymptotics for $A_3$ singularities then produce the $\lambda^{3/8}$ lower bound for the corresponding spherical Whittaker function. Finally, to deduce the bounds on the cusp form $f$ as stated in Corollary \ref{cor:38} one follows the argument sketched in \S\ref{reduction-step}.

\section{Notation and preliminaries}\label{sec:f}

In this section we establish basic notation that we'll need for later calculations. We will take $G$ to be a split semi-simple real Lie group throughout this section.

\subsection{Basic notation on roots}\label{sec:root-notation}
Let $\Theta$ denote a Cartan involution on $G$. Denote by $\theta$ the differential of $\Theta$ on $\mathfrak{g}$, the (real) Lie algebra of $G$. One has an orthogonal direct sum decomposition $\mathfrak{g}=\mathfrak{p}\oplus\mathfrak{k}$ into the $-1$ and $+1$ eigenspaces of $\theta$. Then $\mathfrak{k}$ is the Lie algebra of $K$, the group of fixed points of $\Theta$. 

Choose a maximal abelian subalgebra $\mathfrak{a}$ of $\mathfrak{p}$. The Weyl group of 
$G$ is $W=W(\mathfrak{g},\mathfrak{a})=N_K(\mathfrak{a})/Z_K(\mathfrak{a})$, the quotient 
of the normalizer by the centralizer of the adjoint action of $K$ on $\mathfrak{a}$. 
Denote by $A=\exp(\mathfrak{a})$ the associated closed connected subgroup of $G$. Then 
$A$ is a maximal split torus of $G$ preserved by $\Theta$. Let $\mathfrak{a}^*={\rm 
Hom}(\mathfrak{a},\R)$ be the dual of $\mathfrak{a}$ and 
$\mathfrak{a}_\C^*=\mathfrak{a}^*\otimes\mathbb{C}={\rm 
Hom}(\mathfrak{a},\mathbb{C})=\mathfrak{a}^*+i\mathfrak{a}^*$ its complexification. We 
agree to the notational convention for which $\langle \nu,X\rangle$ is the evaluation of 
$\nu\in\mathfrak{a}_\C^*$ at $X\in\mathfrak{a}$.  Moreover, when $a\in A$ we write 
$\nu(a)$ for $e^{\langle\nu,\log a\rangle}$. 

Let $\Delta=\Delta(\mathfrak{g},\mathfrak{a})$ denote the set of (restricted) roots. We 
have $\mathfrak{g}=\mathfrak{a}\oplus\bigoplus_{\alpha\in\Delta}\mathfrak{g}_\alpha$, 
with each $\mathfrak{g}_\alpha$ of dimension one. For $\alpha\in\Delta$ let $H_\alpha$ be 
the corresponding co-root; this, by definition, is the unique element in 
$\mathfrak{a}_\alpha=[\g_\alpha,\g_{-\alpha}]\subseteq \mathfrak{a}$ such that $\langle 
\alpha, H_\alpha \rangle =2$. For a root $\alpha\in\Delta$ let $X_\alpha\in\g_\alpha$ be 
choosen such that $[X_{-\alpha},X_{\alpha}]=H_\alpha$. The choice of a system of simple 
roots $\Pi$ determines a set of positive roots $\Delta_+$. Let 
$\mathfrak{u}=\bigoplus_{\alpha\in\Delta_+}\mathfrak{g}_\alpha$ and 
$\overline{\mathfrak{u}}=\bigoplus_{\alpha\in\Delta_+}\mathfrak{g}_{-\alpha}$. When $\ell\in\mathfrak{u}^*$ we sometimes write $\langle \ell,u\rangle$ or $\ell(u)$ to mean $\langle\ell,\log u\rangle$. Let $\rho\in\mathfrak{a}^*$ be half the sum of the positive roots; thus $\langle \rho, \cdot \rangle$ is half the trace of the adjoint action on $\mathfrak{u}$.

Let $U$ and $\overline{U}$ be the connected closed subgroups of $G$ whose Lie algebras are $\mathfrak{u}$ and $\overline{\mathfrak{u}}$, respectively. We have $\overline{U}=\Theta U$. Let $B$ be the unique Borel subgroup of $G$ containing $A$ and $U$. Then $U$ is is the unipotent radical of $B$, the Lie algebra of $B$ is $\mathfrak{b}=\mathfrak{a}\oplus\mathfrak{u}$, and one has the Langlands decomposition $B=MAU$, where $M=B\cap K$.

Denote by ${\rm Ad}:G\rightarrow {\rm Aut}(\mathfrak{g})$ the adjoint representation. For $g\in G$ and $X\in\mathfrak{g}$ we will often use the shorthand $X^g$ to denote ${\rm Ad}_{g^{-1}}X$. (The inverse in the latter notation is there for the right-action rule $X^{gh}=(X^g)^h$ to hold.) Similarly, for $g,z\in G$ we write $z^g=g^{-1}zg$. For $g\in G$ and $\lambda\in \mathfrak{g}^*$ we let $\mathrm{Ad}^*_g \lambda := \lambda \circ\mathrm{Ad}_{g^{-1}}$.

Fix a choice $B(\cdot,\cdot):\g\times\g\rightarrow \R$ of $\Ad$-invariant non-degenerate symmetric bilinear form, normalized to be positive definite on
$\mathfrak{p}$. Then $-B(X,\theta Y)$ is positive definite on $\mathfrak{g}$; let $\pnorm{X}^2=-B(X,\theta X)$ be the associated norm on elements of
$\mathfrak{g}$. The restriction of $B(\cdot,\cdot)$ to $\mathfrak{a}$ defines a positive definite bilinear form. We use $B(\cdot,\cdot)$ to identify
$\mathfrak{a}^*$ with $\mathfrak{a}$ as follows. 
For $\xi\in \mathfrak{a}^*$, we let $H_\xi$ denote the unique element in $\mathfrak{a}$ such that $\langle \xi,H\rangle =B(H_\xi,H)$ for every $H\in\mathfrak{a}$. 
Furthermore, we can extend $B(\cdot,\cdot)$ to a hermitian scalar product on $\mathfrak{a}_\C$, allowing us to identify $\mathfrak{a}^*_\C$ with $\mathfrak{a}_\C$. 
If $\nu\in i\mathfrak{a}^*$ with $\xi={\rm Im}\,\nu$, we write $H_\nu$ for $H_\xi$.

The root hyperplane (or wall) associated to the element $\alpha\in\Delta$ is the linear subspace of $\mathfrak{a}$ on which it vanishes. The Weyl chambers are the connected components of the complement of all walls in $\mathfrak{a}$. The union of all Weyl chambers is the set $\mathfrak{a}_{\rm reg}$ of regular elements. Let $\mathfrak{a}_+$ (resp. $\mathfrak{a}^*_+$) denote the positive Weyl chamber in $\mathfrak{a}$ (resp. $\mathfrak{a}^*$). The Weyl group acts simply transitively on the Weyl chambers. An element $H$ is regular if and only if $H^w=H$ for some $w\in W$ implies $w=e$. The long Weyl element, which we denote by $\mathsf{w}$, sends $\mathfrak{a}_+$ to $-\mathfrak{a}_+$. We make once and for all a choice of a lift of the longest Weyl group element to an element in $K$ and we continue to write it as  $\mathsf{w}$. 

\subsection{Iwasawa decomposition}\label{sec:Iwasawa}
The Iwasawa decomposition is $G=UAK$. We denote by $\upkappa(g)$ the unique element in $K$ such that $g\upkappa(g)^{-1} \in AU$, and $\uptau (g)=g\upkappa(g)^{-1}$.

For $a\in A$ let $\delta (a)=|\det (\Ad(a)_{|_U})|$, the Jacobian of the automorphism of $U$ sending $u$ to $aua^{-1}$. Thus, if $du$ is any Haar measure on $U$, then $\int_U f(aua^{-1})du=\delta (a)^{-1}\int_U f(u) du$. Since $a\in A$ acts on $X\in\g_\alpha$ via the adjoint action by multiplication by $\langle\alpha,\log a\rangle$ we have $\delta(a)=\prod_{\alpha\in\Delta_+} \alpha(a)=\rho(a)^2$. For any choice of left-invariant Haar measures $du$, $da$, $dk$ on $U$, $A$, and $K$, respectively, the product measure $dg=\delta(a)^{-1}du\, da\, dk$ defines a left-invariant Haar measure on $G$.

Recall the Iwasawa decomposition of the Lie algebra $\mathfrak{g}=\mathfrak{u}\oplus\mathfrak{a}\oplus\mathfrak{k}$. We denote by $E_\mathfrak{a}$ (resp., $E_\mathfrak{u}$, $E_\mathfrak{k}$) the projection from $\mathfrak{g}$ onto $\mathfrak{a}$ (resp., $\mathfrak{u}$, $\mathfrak{k}$). Note that unlike $E_{\mathfrak{a}}$, the projections  $E_\mathfrak{u}$, $E_\mathfrak{k}$ are not orthogonal with respect to $B$.

The map $H:G\rightarrow \mathfrak{a}$ sending $g=ue^Xk$ ($u\in U, X\in\mathfrak{a}, k\in K$) to $X$ is called the Iwasawa projection. Its derivative was computed in \cite[Corollary 5.2]{DKV}. We state and prove a consequence of this which will be useful for us in \S\ref{tau n}.

\begin{lem}\label{lem:DKV} For $\xi\in\mathfrak{a}^*$, the right derivative along $X\in \mathfrak{g}$ of the function $g\mapsto \langle \xi, H(g) \rangle$ is
equal to $\langle \xi ,X^{\upkappa(g)^{-1}} \rangle$.
\end{lem}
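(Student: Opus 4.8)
The plan is to reduce the statement to the derivative formula for the Iwasawa projection recorded in \cite[Corollary 5.2]{DKV} and then simplify. First I would set up the one-parameter family: fix $g\in G$ and $X\in\mathfrak{g}$, and consider the curve $t\mapsto g\exp(tX)$ in $G$. Writing $g\exp(tX)=u(t)\exp(H(g\exp(tX)))k(t)$ in its Iwasawa form, the quantity to differentiate is $t\mapsto\langle\xi,H(g\exp(tX))\rangle$ at $t=0$. The derivative of the $\mathfrak{a}$-component of the Iwasawa decomposition, i.e. of $H(\cdot)$, is computed in \cite{DKV}: the derivative of $H$ at $g$ in the direction of the \emph{left-invariant} vector field associated to $X$ is $E_{\mathfrak{a}}\bigl(\mathrm{Ad}_{k(g)}X\bigr)$ — here I am using that $g=u e^{H(g)}k$ and that the relevant correction terms coming from the $U$- and $K$-parts drop out when one projects orthogonally onto $\mathfrak{a}$, which is the content of the cited corollary. (Some care with left- versus right-invariant vector fields and with the convention $X^g=\mathrm{Ad}_{g^{-1}}X$ is needed, but this is bookkeeping.)

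The second step is to pair this with $\xi$ and rewrite. We have
\[
\frac{d}{dt}\Big|_{t=0}\langle\xi,H(g\exp(tX))\rangle=\langle\xi, E_{\mathfrak{a}}(\mathrm{Ad}_{\upkappa(g)}X)\rangle.
\]
Now since $\xi\in\mathfrak{a}^*$ and $E_{\mathfrak{a}}$ is the \emph{orthogonal} projection onto $\mathfrak{a}$ with respect to $B$ (this is the point where we use that $E_{\mathfrak{a}}$, unlike $E_{\mathfrak{u}}$ and $E_{\mathfrak{k}}$, is $B$-orthogonal, as noted in the text), and since the pairing $\langle\xi,\cdot\rangle$ on $\mathfrak{a}$ is $B(H_\xi,\cdot)$, the projection can be dropped: $\langle\xi,E_{\mathfrak{a}}(Y)\rangle=B(H_\xi,E_{\mathfrak{a}}(Y))=B(H_\xi,Y)$ for any $Y\in\mathfrak{g}$, because $H_\xi\in\mathfrak{a}$ and $Y-E_{\mathfrak{a}}(Y)$ is $B$-orthogonal to $\mathfrak{a}$. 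Hence the expression equals $B(H_\xi,\mathrm{Ad}_{\upkappa(g)}X)$. It remains to massage $\mathrm{Ad}_{\upkappa(g)}X$ into the notation $X^{\upkappa(g)^{-1}}=\mathrm{Ad}_{\upkappa(g)}X$ of \S\ref{sec:root-notation}, and to observe that $B(H_\xi,\,\cdot\,)$ applied to an element is exactly $\langle\xi,\,\cdot\,\rangle$ when the argument lies in $\mathfrak{a}$ — but since we have already shown we may insert or remove $E_{\mathfrak{a}}$ freely inside $\langle\xi,\cdot\rangle$, we may simply write the answer as $\langle\xi,X^{\upkappa(g)^{-1}}\rangle$, which is the claim.

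The main obstacle is entirely the first step: correctly extracting the directional-derivative statement from \cite[Corollary 5.2]{DKV} with the conventions of this paper, in particular matching the side on which the group acts, the placement of inverses, and verifying that the terms of the DKV derivative formula valued in $\mathfrak{u}$ and $\mathfrak{k}$ really do get killed upon $B$-orthogonal projection to $\mathfrak{a}$ and pairing with $\xi\in\mathfrak{a}^*$. Once that identification is pinned down, the remainder is a one-line computation using only that $E_{\mathfrak{a}}$ is $B$-orthogonal and that $B$ identifies $\mathfrak{a}^*$ with $\mathfrak{a}$. I would therefore devote the bulk of the written proof to a careful statement of what is being borrowed from \cite{DKV} and a clean treatment of the orthogonality point, and dispatch the rest briskly.
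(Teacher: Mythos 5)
Your proof is correct and follows essentially the same route as the paper's: pass the derivative through the pairing, cite the DKV derivative formula for the Iwasawa projection $H$ to get $E_{\mathfrak{a}}(X^{\upkappa(g)^{-1}})$, and then use that $\mathfrak{a}$ is $B$-orthogonal to $\mathfrak{k}\oplus\mathfrak{u}$ (so $E_{\mathfrak{a}}$ can be dropped inside $\langle\xi,\cdot\rangle$). The only cosmetic difference is that the paper's proof cites \cite[Lemma 5.1]{DKV} where you cite Corollary 5.2, but these refer to the same derivative computation.
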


\begin{proof} From the linearity of $\xi\mapsto \langle \xi,H\rangle$, we may pass the derivative inside the bracket. From \cite[Lemma 5.1]{DKV} the directional derivative along $X$ of the Iwasawa projection $g \mapsto H(g)$ is $E_\mathfrak{a}(X^{\upkappa(g)^{-1}})$. This gives 
\[
\frac{d}{dt} \langle \xi , H(ge^{tX})\rangle |_{t=0} =\langle \xi, E_\mathfrak{a}(X^{\upkappa(g)^{-1}})\rangle.
\]
 As $\mathfrak{a}^*$ is orthogonal to $\mathfrak{k}\oplus\mathfrak{u}$ we have 
 $\langle\xi,E_\mathfrak{a}(X^{\upkappa(g)^{-1}})\rangle = \langle\xi, 
 X^{\upkappa(g)^{-1}}\rangle$, as desired.
\end{proof}

If we apply the above lemma to the case when $g=\mathsf{w}u$, for $u\in U$ then we recover the following result which appears in~\cite[Proposition 9.1]{Cohn}, and also in~\cite{DKV}. Let $\xi\in\mathfrak{a}^*$ be given. Then $u$ is a critical point of $\langle \xi ,H(\mathsf{w}u)\rangle$ if and only if $\xi^{\upkappa (\mathsf{w}u)}\in\mathfrak{a}^*$. In particular, if $\xi$ is regular then the only critical point of $\langle \xi, H(\mathsf{w}u) \rangle$ is the identity $e$.

\subsection{Bruhat decomposition}

Next we recall the Bruhat decomposition,
\[
G=\bigsqcup_{w\in W} G_w,
\]
where $G_w=BwU_w$ and $U_w=U\cap (w^{-1}\overline{U}w).$ The cell $G_\mathsf{w}=B\mathsf{w}U$ associated with the long Weyl element $\mathsf{w}$ is called the {\it big cell;} it is open and dense in $G$. For any $w\in W$ let $\mathfrak{u}_w$ denote the Lie algebra of $U_w$.  Note that $\mathfrak{u}_\mathsf{w}=\mathfrak{u}$. We have
\[
\mathfrak{u}_w=\bigoplus_{\alpha\in\Delta_+(w)}\g_\alpha,\quad\text{where}\quad  \Delta_+(w)=\{\alpha\in\Delta_+: -w\alpha\in\Delta_+\}.
\]
Write $\mathfrak{u}^w$ for the direct sum of the $\g_\alpha$ for $\alpha\in\Delta_+-\Delta_+(w)$, so that $\mathfrak{u}=\mathfrak{u}_w\oplus\mathfrak{u}^w$.

We call an element $\ell\in \mathfrak{u}^*$ {\it degenerate} if it vanishes identically on some simple root space $\mathfrak{g}_\alpha$, $\alpha\in \Pi$. We call it {\it non-degenerate} otherwise. Since at least one of the roots in $\Delta_+-\Delta_+(w)$ is simple, $\ell$ is degenerate if and only if it belongs to $\mathfrak{u}_w^*$ for some $w\neq \mathsf{w}$. The set of non-degenerate functionals is therefore equal to $\mathfrak{u}^* - \bigcup_{w\neq \mathsf{w}} \mathfrak{u}^*_w$.

The Bruhat decomposition of $G$ gives rise to a cellular decomposition on the flag variety $B\backslash G$. By definition, these cells are the orbits of the cosets $Bw$, where $w\in W$, under the natural right-action of $U$ on $B\backslash G$. When we make the identification $B\backslash G=M\backslash K$, the action of $U$ on $B\backslash G$ induces a right-action of $U$ on $M\backslash K$ given by $(k,u)\mapsto M\upkappa (ku)$. The Bruhat cell $B\backslash BwU$ is then identified with the image $S_w^+$ of the map
\[
U\rightarrow M\backslash K, \qquad u\mapsto M\upkappa (wu).
\]
We thus obtain the following decomposition
\[
M\backslash K=\bigsqcup_{w\in W}S_w^+.
\]
We are borrowing the notation $S_w^+$ (for {\it stable} manifold) from \cite[\S3]{DKV}.
We note that $S_w^+$ are stable under right $M$-action. Moreover under the inversion $k\mapsto k^{-1}$, the cell $S_w^+$ is mapped bijectively to $S_{w^{-1}}^+$.

Compare the following result to [{\it loc. cit.}, Proposition 7.1]. 

\begin{lem}\label{lem:dkappa}
For $w\in W$ the differential of the above map $U\rightarrow S_w^+$  is given by
\begin{equation}\label{} 
	d\upkappa(wu)(Y) = E_\mathfrak{k}(Y^{\upkappa (wu)^{-1}})^{\upkappa(wu)}.
\end{equation}
The restriction to $U_w$ induces a diffeomorphism of $U_w$ onto $S_w^+$. 
\end{lem}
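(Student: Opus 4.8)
The statement has two parts: (i) a formula for the differential of $u\mapsto \upkappa(wu)$, and (ii) the claim that the restriction of this map to $U_w$ is a diffeomorphism onto $S_w^+$. I would handle them separately, the first being a computation in the spirit of Lemma~\ref{lem:DKV}, the second a combination of a dimension count and an injectivity argument using the Bruhat decomposition.

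\medskip

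\emph{Part (i): the differential formula.} Write $g=wu$ and recall that $\upkappa(g)\in K$ is characterized by $g\upkappa(g)^{-1}\in AU$, i.e. $\uptau(g)=g\upkappa(g)^{-1}\in AU$. The plan is to differentiate the defining relation $g=\uptau(g)\upkappa(g)$ along a tangent vector $Y\in\mathfrak{u}$ (thought of as $\frac{d}{dt}\big|_0 u\exp(tY')$ for a suitable $Y'$; since $U$ acts on the right one takes $Y$ to be the left-invariant vector field, so the perturbation is $wu\exp(tY)$). Right-translating the derivative of $g=\uptau(g)\upkappa(g)$ back to the identity, one gets a decomposition of $Y^{\upkappa(g)^{-1}}=\mathrm{Ad}_{\upkappa(g)}Y$ into its $\mathfrak{a}\oplus\mathfrak{u}$ part (coming from the variation of $\uptau$) and its $\mathfrak{k}$ part (coming from the variation of $\upkappa$), exactly as in \cite[Lemma~5.1]{DKV}. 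This identifies the right-invariant expression of $d\upkappa(wu)(Y)$ with $E_\mathfrak{k}(Y^{\upkappa(wu)^{-1}})$; conjugating back by $\upkappa(wu)$ to land in the tangent space $T_{\upkappa(wu)}K$ in the form the lemma wants gives $d\upkappa(wu)(Y)=E_\mathfrak{k}(Y^{\upkappa(wu)^{-1}})^{\upkappa(wu)}$. This is routine once one is careful about left versus right trivializations of $TK$; the cleanest route is just to cite the DKV computation and track the normalization.

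\medskip

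\emph{Part (ii): diffeomorphism onto $S_w^+$.} Surjectivity is immediate from the definition of $S_w^+$ as the image of $u\mapsto M\upkappa(wu)$, once we pass to $M\backslash K$; so the content is that the restriction to $U_w$ is an immersion and is injective. For the immersion statement I would use the formula from part (i): at $Y\in\mathfrak{u}_w$, the differential vanishes (modulo $\mathfrak{m}=\mathrm{Lie}(M)$, since we are in $M\backslash K$) iff $\mathrm{Ad}_{\upkappa(wu)}Y\in\mathfrak{a}\oplus\mathfrak{u}\oplus\mathfrak{m}=\mathfrak{b}\oplus\mathfrak{m}$, i.e. iff $Y\in \mathrm{Ad}_{\upkappa(wu)^{-1}}(\mathfrak{b}+\mathfrak{m})$. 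But $wu=\uptau(wu)\upkappa(wu)$ with $\uptau(wu)\in B$, so $\mathrm{Ad}_{\upkappa(wu)^{-1}}(\mathfrak{b}+\mathfrak{m})=\mathrm{Ad}_{(wu)^{-1}}(\mathfrak{b}+\mathfrak{m})=\mathrm{Ad}_{u^{-1}}\mathrm{Ad}_{w^{-1}}(\mathfrak{b}) + (\text{stuff in } \mathfrak{m}\text{-conjugate})$; since $\mathrm{Ad}_{w^{-1}}\mathfrak{b}=\mathfrak{a}\oplus\bigoplus_{\alpha}\mathfrak{g}_{w^{-1}\alpha}$ and $\mathrm{Ad}_{u^{-1}}$ preserves the big Borel $\mathfrak{b}$ only up to lower-triangular terms, one checks that the intersection of this subspace with $\mathfrak{u}_w$ is zero — precisely because $\mathfrak{u}_w$ is spanned by the $\mathfrak{g}_\alpha$ with $\alpha\in\Delta_+$ and $-w\alpha\in\Delta_+$, so these root spaces get sent by $\mathrm{Ad}_{w^{-1}}$ outside $\mathfrak{b}$, and the unipotent $\mathrm{Ad}_{u^{-1}}$ cannot fix that. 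So the differential is injective on $\mathfrak{u}_w$; since $\dim U_w=\dim S_w^+$ (both equal $|\Delta_+(w)|$, which one sees from the cell decomposition $M\backslash K=\bigsqcup S_w^+$ and $\dim S_w^+=\dim B\backslash BwU$), it is a local diffeomorphism. For global injectivity one uses the uniqueness in the Bruhat decomposition: $U_w$ parametrizes $B\backslash BwU$ freely (the map $U_w\to B\backslash BwU$, $u\mapsto Bwu$, is a bijection), and $B\backslash BwU\hookrightarrow B\backslash G=M\backslash K$ is injective, so $u\mapsto M\upkappa(wu)$ is injective on $U_w$. Combining, we get a bijective local diffeomorphism, hence a diffeomorphism onto its image $S_w^+$.

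\medskip

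\emph{Main obstacle.} The delicate point is Part (ii)'s immersion/injectivity of the differential on $\mathfrak{u}_w$: one must pin down exactly which directions in $\mathfrak{u}$ are killed by $d\upkappa(wu)$ modulo $\mathfrak{m}$, and verify these form precisely a complement to $\mathfrak{u}_w$ (namely, the tangent directions along $\mathfrak{u}^w=\mathfrak{u}\cap w^{-1}Uw$, which move inside the same $B$-coset and hence inside the same point of $B\backslash G$). Getting the bookkeeping of $\mathrm{Ad}_w$ on root spaces right, and checking the unipotent conjugation by $u$ does not spoil the transversality, is the real work; everything else is either a direct citation of \cite{DKV} or a formal consequence of the Bruhat decomposition. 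I would lean on the stated comparison with \cite[Proposition~7.1]{DKV} to shortcut the root-space bookkeeping.
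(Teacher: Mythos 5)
Your Part~(i) follows the paper's own computation essentially verbatim: write $wu = \uptau(wu)\upkappa(wu)$, conjugate the perturbation $\exp(tY)$ past $\upkappa(wu)^{-1}$, project to $\mathfrak{k}$, conjugate back. Citing \cite[Lemma~5.1]{DKV} is a legitimate shortcut but is not even needed — the paper just does the two-line calculation directly.

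Part~(ii), however, takes a genuinely different and more laborious route, and it contains a gap. The paper's proof is a one-liner: the isotropy group of the base point $Bw$ under the right $U$-action on $B\backslash G$ is $U^w = U\cap w^{-1}Uw$, and since $U = U_wU^w$ with $U_w\cap U^w = \{e\}$, the orbit map restricted to $U_w$ is a diffeomorphism onto the $U$-orbit $S_w^+$ — no differential computation, no dimension count, no root-space bookkeeping. Your approach instead tries to verify the immersion condition directly via the formula from Part~(i), and here you stumble. You correctly reduce to showing $\mathfrak{u}_w\cap\mathrm{Ad}_{\upkappa(wu)^{-1}}\mathfrak{b}=0$, and observe that since $\uptau(wu)\in B$ this is $\mathfrak{u}_w\cap\mathrm{Ad}_{u^{-1}}\mathrm{Ad}_{w^{-1}}\mathfrak{b}=0$; but then you wave at ``the unipotent $\mathrm{Ad}_{u^{-1}}$ cannot fix that.'' This is not innocuous: the root-space argument you sketch only applies cleanly at $u=e$, where $\mathrm{Ad}_{w^{-1}}\mathfrak{b}\cap\mathfrak{u} = \mathfrak{u}^w$ and one sees immediately $\mathfrak{u}_w\cap\mathfrak{u}^w = 0$. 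At general $u\in U_w$ the relevant intersection becomes $\mathfrak{u}_w\cap\mathrm{Ad}_{u^{-1}}\mathfrak{u}^w$, and $\mathrm{Ad}_{u^{-1}}\mathfrak{u}^w$ is no longer a sum of root spaces, so ``root-space bookkeeping'' does not directly apply. To close this you would need a separate observation — e.g.\ that $u^{-1}U^wu\cap U_w = \{e\}$ for $u\in U_w$, which follows by noting that $b\in U^w$, $u^{-1}bu\in U_w$ forces $b = u(u^{-1}bu)u^{-1}\in U_w\cap U^w$ — and at that point you have essentially reproduced the group-theoretic factorization $U = U_wU^w$ that the paper invokes from the start. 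So the ``real work'' you flag is real, is not done, and the cleaner fix is to discard the differential approach for Part~(ii) altogether and argue via stabilizers as the paper does.
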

\begin{proof}
We begin by writing $wu = \uptau(wu) \upkappa(wu)$; thus for any $t\in\R$ we have $wue^{tY} = \uptau(wu) e^{t Y^{k}} k^{-1}$, where we have set $k=\upkappa(wu)^{-1}$. Then $d\upkappa(wu)(Y)$ is equal to
\[
\frac{d}{dt}\upkappa\left(wue^{tY}\right)|_{t=0}=\frac{d}{dt}\upkappa\left(e^{tY^k}k^{-1}\right)|_{t=0}=\frac{d}{dt}e^{tE_\mathfrak{k}(Y^{k})}k^{-1}|_{t=0}.
\]
Conjugating this by $k$, we obtain the desired formula.	

For the second statement, it suffices to observe that the isotropy subgroup of the point $Bw$ for the $U$-action on $B\backslash G$ is the analytic subgroup $U^w=U\cap w^{-1}Uw$ of $G$ whose Lie algebra is $\mathfrak{u}^w$. Since $U=U_wU^w$ and $U_w\cap U^w=\{e\}$ the claim follows.\end{proof}

\subsection{Spherical representations and invariants} \label{s:spherical-rep}

Let $S=G/K$ be the globally Riemannian symmetric space associated with $G$, and 
$\mathscr{D}_G(S)$ the ring of left $G$-invariant differential operators on $S$. The 
Harish-Chandra isomorphism identifies the differential eigencharacters ${\rm 
Hom}(\mathscr{D}_G(S),\C)$ with the space of (spherical) infinitesimal characters 
$\mathfrak{a}_\C^*/W$. For $\nu\in \mathfrak{a}_\C^*$ let $\lambda_\nu$ be the associated 
Laplacian eigenvalue given by evaluating the associated differential eigencharacter on 
$\Delta$. The Laplacian being an order two differential operator, when we scale $\nu$ by 
$t>1$ we obtain $\lambda_{t\nu}\asymp t^2\lambda_\nu$.

For $\nu\in i\mathfrak{a}^*$ consider the representation of $G$ by right-translation on the space of smooth functions $f:G\rightarrow\C$ satisfying
\[
f(bg)=f(g)\delta(b)^{1/2}e^{\langle\nu,H(b)\rangle}\qquad g\in G, b\in B.
\]
The inner product $\int_K f_1(k) \overline{f_2}(k) dk$ on this space is $G$-invariant.
Here $dk$ is the probability Haar measure on $K$.
 We denote by $\pi_\nu$ the completion of this space relative to this normalized inner product. Then $\pi_\nu$ is an irreducible unitary spherical tempered
representation (spherical principal series~\cite[\S5]{book:Wall1}). We have $\pi_\nu\simeq \pi_{\nu'}$ if and only if $\nu=w\nu'$ for some $w\in W$. We shall only be interested in $\nu$ regular; so that $w\nu\neq \nu$ unless $w=e$. The isomorphism classes of irreducible unitary regular tempered spherical representations of $G$ are parametrized by $\nu$ lying in the positive chamber $i\mathfrak{a}_+^*$.

We define the {\it height of $G$} to be
\[
{\rm ht}(G)=\sum_{\alpha\in\Delta^+}{\rm ht}(\alpha),
\]
where ${\rm ht}(\alpha)$ is the sum of the coefficients of $\alpha$ when written as a linear combination of the positive simple roots. The height of $G$ has the following property: for an element $a\in A$ and a positive real $t>0$ let $ta$ be the unique element in $A$ whose simple roots satisfy $\alpha(ta)=t\alpha(a)$ for all  $\alpha\in\Pi$. Then one deduces that
\begin{equation}\label{a-scaling}
\delta(ta)= \prod_{\alpha\in \Delta_+} \alpha(ta) = t^{{\rm ht}(G)}\delta(a).
\end{equation}
In particular, the height of $G$ describes the size of the {\it spherical vector} in $\pi_\nu$ along directions $ta$. Recall that the spherical vector in $\pi_\nu$ is the unique $K$-fixed vector taking value $1$ at the identity. It has $L^2$-norm $1$ and is given by the expression
\[
f_\nu(g)=e^{\langle\rho+\nu,H(g)\rangle}=\delta(g)^{1/2}e^{\langle\nu,H(g)\rangle}.
\]
Here and elsewhere, $\delta(g)=\delta(a)$ if $g=uak$; alternatively, $\delta(g)=e^{2\langle \rho, H(g)\rangle}$.

The height of $G$ can also be used to describe the continuous spectrum 
$[\lambda_1(S),\infty )$ of the Laplacian acting on $L^2(S)$ for the symmetric space $S$. 
Indeed one has 
$\lambda_1(S)=\lambda_\nu$ for $\nu=0\in i\mathfrak{a}^*$, the Laplace eigenvalue for the 
trivial 
infinitesimal character in the notation of \S\ref{s:spherical-rep}. For example, when 
$G=\PGL_n(\R)$ and $S_n=\PGL_n(\R)/{\rm 
PO}(n)$ we have $\lambda_1(S_n)=(n^3-n)/24$ (see, for example, ~\cite{Miller02}) and
\[
{\rm ht}(\PGL_n)= \sum_{i=1}^{n-1}\sum_{j=1}^ii=8\lambda_1(S_n)-\dim U.
\]

\subsection{Whittaker models, phase functions, and associated Lagragians}\label{Whitt-structures}
We now describe various Whittaker structures associated with the above representations $\pi_\nu$.

Let $\psi$ be a unitary character of $U$. Then $\psi$ factors through $U_{\rm der}=[U,U]$ and since the abelianization of $U$ is $U_{\rm ab}=U/U_{\rm der}=\prod_{\alpha\in\Pi}U_\alpha$, where $U_\alpha$ is the analytic subgroup with Lie algebra $\g_\alpha$, we may factorize $\psi$ as $\psi=\prod_{\alpha\in\Pi}\psi_\alpha$. We call $\psi$ {\it non-degenerate} if each $\psi_\alpha$ is non-trivial. Using the elements $X_\alpha\in\mathfrak{g}_\alpha$ from \S\ref{sec:root-notation}, we identify $\R$ with $U_\alpha$ via the map $t\mapsto e^{t X_\alpha}$. We denote by $\psi_1=\prod_{\alpha\in\Pi}\psi_{1,\alpha}$ the unique character of $U$ such that $\psi_{1,\alpha}(e^{t X_\alpha})=e^{2\pi it}$ for all $\alpha\in\Pi$. We let $\ell_1$ be the unique element in $\mathfrak{u}^*_{\rm ab}$ such that $\psi_1(u)=e^{i \langle\ell_1,u\rangle}$.

Now consider the space $C^\infty(U\backslash G,\psi)$ of smooth functions $W$ on $G$ satisfying the transformation formula $W(ug)=\psi(u)W(g)$ for all $g\in G$ and $u\in U$. Then $G$ acts on $C^\infty(U\backslash G,\psi)$ by right-translation. This is the Whittaker space associated to $\psi$; it is the induction to $G$ of the one-dimensional representation $\psi$ of $U$.

For $\nu\in i\mathfrak{a}^*$, one can define~\cite{Jacquet67, Shahidi80}  a non-zero linear form on $\pi_\nu$ by setting
\begin{equation*}
\mathbb{J}^\psi(f)=\int_U f(\mathsf{w}u)\overline{\psi(u)}du,
\end{equation*}
a conditionally convergent integral~\cite[\S15]{book:Wall2}. One readily verifies that for $u\in U$, $\mathbb{J}^\psi(\pi_\nu(u) f)=\psi(u)\mathbb{J}^\psi(f)$, so that $0\neq \mathbb{J}^\psi\in {\rm Hom}_U (\pi_\nu,\psi)$. It is known that $\dim_\C {\rm Hom}_U (\pi_\nu,\psi)=1$. Thus $\mathbb{J}^\psi$ is the unique non-zero element up to scaling. We can replace $f$ by its translate by a group element to form
$\mathbb{J}^\psi(\pi_\nu(g)f)$, which as a function on $G$ satisfies $\mathbb{J}^\psi(\pi_\nu(ug)f)=\psi(u)\mathbb{J}^\psi(\pi_\nu(g)f)$ for every $g\in G$ and $u\in U$. The assignment $f\mapsto\mathbb{J}^\psi(\pi_\nu(\cdot )f)$ is a non-zero intertwining from (the smooth subspace of) $\pi_\nu$ to $C^\infty(U\backslash G,\psi)$. We denote the image by $\mathcal{W}(\pi_\nu,\psi)$ and refer to it as the Whittaker model of $\pi_\nu$. 

Let $W_\nu^\psi$ denote the image of the spherical function $f_\nu\in \pi_\nu^K$ under this intertwining: $W_\nu^\psi(g)=\mathbb{J}^\psi(\pi_\nu(g)f_\nu)$ for $g\in G$. This is the Jacquet-Whittaker function, given explicitly by
\begin{equation}\label{Jacquet-int}
W_\nu^\psi (g)=\int_U \delta(\mathsf{w}ug)^{1/2} e^{i B(H_\nu, H(\mathsf{w}ug))}\overline{\psi(u)}du.
\end{equation}
Clearly $W_\nu^\psi$ lies in $\mathcal{W}(\pi_\nu,\psi)^K$, the one-dimensional space of 
$K$-fixed vectors in $\mathcal{W}(\pi_\nu,\psi)$. When $\psi=\psi_1$ we simplify the 
notation and write $W_\nu$ in place of $W_\nu^{\psi_1}$. From the above integral we may 
extract the oscillatory dependence via
\begin{equation}\label{def:F}
F_\nu(u,g)=B(H_\nu,H(\mathsf{w}ug))-\langle\ell_1,u\rangle,
\end{equation}
the {\it Whittaker phase function}. By the right $K$-invariance in the second variable we 
often view $F_\nu$ as a function on $U\times S$, and write $F_\nu(u,x)$ for $x=gK$. 

Denote by $\Sigma_\nu$ the fiber critical set  of $F_\nu$ with respect to the natural projection $U\times S\rightarrow S$; thus
\[
\Sigma_\nu=\{ (u,x)\in U\times S: d_uF_\nu (u,x)=0\}.
\]
There is an associated fiber preserving map~\cite{Duistermaat:oscillatory,Hormander:FIO-I}
\begin{equation}\label{immersion}
\Sigma_\nu\rightarrow T^*(S),\qquad (u,x)\mapsto (x, d_xF_\nu(u,x)),
\end{equation}
into the cotangent bundle $T^*(S)\rightarrow S$ of $S$, whose image we denote by $\Lambda_\nu$.

If $\nu$ is regular then $F_\nu$ is a non-degenerate phase function \cite[Theorem 
6.7.1]{Kost79:Toda}, which implies that $\Sigma_\nu$ is a smooth manifold of dimension 
$\dim S$ and $\Lambda_\nu$ is a Lagrangian submanifold of $T^*(S)$. In particular, 
$\Lambda_\nu\rightarrow S$ is a Lagrangian mapping. 

\section{Reduction to local estimates}\label{sec:intro:gl2} The purpose of this section is to reduce the proof of Theorem \ref{th:gln} to Theorem \ref{general tau}, and of Proposition~\ref{ess supp} to Theorem \ref{whitt supp}.

\subsection{Reduction of Theorem \ref{th:gln} to Theorem \ref{general 
tau}}\label{sec:intro:outline} We follow the method of~\cite{Temp:p-adic}.
Let $G=\PGL_n(\R)$ and $K=\PO(n)$, and let $f$ be a Hecke--Maass form on $\Gamma \backslash S_n$ with eigenvalue $\lambda  >  0$. 

 We view $f$ as a right $K$-invariant function in $L^2(\Gamma \backslash G)$. Choose a non-degenerate character $\psi$ of $U$, trivial on $\Gamma_U=\Gamma\cap U$, and consider the Whittaker integral
\[
W_f(g)=
\int_{\Gamma_U\backslash U} 
f(ug) 
\overline{\psi(u)} du,
\quad g\in G.
\]
Since $\Gamma_U\backslash U$ is compact, we deduce that $\pnorm{f}_\infty \ge {\rm vol}(\Gamma_U\backslash U)^{-1}\pnorm{W_f}_\infty$.

From the Hecke assumption on $f$, we know that $W_f$ is a non-zero vector belonging to the one-dimensional space $\mathcal{W}(\pi_\nu,\psi)^K$ of $K$-fixed vectors in the local Whittaker model  of $\pi_\nu$.  A result of Baruch~\cite[Corollary 10.4]{Baruch}, extending to the archimedean case the analogous result of Bernstein over non-archimedean local fields, shows that there is a unique (up to scaling) $G$-invariant inner product on $\mathcal{W}(\pi_\nu,\psi)$ given by
\begin{equation*}
\int_{U\backslash P_n}W_1(p)\overline{W_2}(p)d\dot{p}.
\end{equation*}
Here $P_n$ is the mirabolic subgroup of $G$ consisting of (homothety classes of) matrices 
with $(0,\dots,1)$ in the bottom row. Note that $P_n=G_{n-1}\ltimes U/U_{n-1}$, where 
$G_{n-1}=\PGL_{n-1}(\R)$ and $U_{n-1}$ is the unipotent radical of the standard Borel in 
$G_{n-1}$. The right-invariant Haar measure $d\dot{p}$ on $U\backslash P_n$ is given by 
choosing Haar measures on the unimodular groups $G_{n-1}$, $U_{n-1}$, and $U$. 

The unfolding of the Rankin-Selberg integral implies~\cite{FLO} that
\[
\pnorm{f}_2^2
=
c\, \Run L(s,\pi\times \widetilde\pi)
\pnorm{W_f}_2^2,
\]
where $\pi$ is the cuspidal automorphic representation generated by the Hecke-Maass form $f$. Here $c>0$ is a constant depending only on the volume
normalization. Moreover, by Li~\cite{Li09} (see also \cite{Brum06, Molt02, RW}), we have 
\[
\Run L(s,\pi\times \widetilde \pi)\ll_\varepsilon \lambda^\varepsilon,
\quad\text{for all $\epsilon>0$.}
\]
The implicit constant depends on $\Gamma$, but since we view the space $\Gamma\backslash S_n$ as being fixed, we will always drop the dependence on $\Gamma$. From this we deduce the lower bound
\[
\pnorm{W_f}_\infty\gg_\varepsilon \lambda^{-\varepsilon}\pnorm{W_f}_\infty/\pnorm{W_f}_2.
\]

From its scale invariance and the multiplicity one of spherical Whittaker functions, this last quotient is unchanged under the substitution of the
global Whittaker period $W_f$ by any other non-zero vector $W\in \mathcal{W}(\pi_\nu,\psi)^K$, where $\nu\in i\mathfrak{a}^*$ is the spectral
parameter. Taking $W=W_\nu$, this yields
\[
\pnorm{f}_\infty \gg_\varepsilon \lambda^{-\varepsilon}\pnorm{W_\nu}_\infty/\pnorm{W_\nu}_2 .
\] 
Now Theorem~\ref{general tau} says that $\pnorm{W_\nu}_\infty\gg 
\lambda^{\frac{c(G)}{2}}\pnorm{W_\nu}_2$ for $\nu \in \sqrt{\lambda}\Omega$.
This completes the reduction of Theorem \ref{th:gln} to Theorem \ref{general tau}.\qed

\begin{remark}
In~\cite{GeL06}, Gelbart, Lapid, and Sarnak establish a lower bound on Langlands-Shahidi $L$-functions $L(1+it,f,r)$ for generic automorphic cusp forms $f$ and $|t|\to \infty$. Their method, like that of this paper, relies on lower bounds for Whittaker functions. To compare,
\begin{itemize}
\item[--] in this paper, a lower bound for $\pnorm{W}_\infty$ and the convexity upper bound for $L(1,f\times \tilde f)$ together imply a lower bound for $\pnorm{f}_\infty$; 
\medskip
\item[--] in~\cite{GeL06}, a lower bound for Whittaker functions [{\it loc. cit}, Lem.\,7] and an upper bound for $\pnorm{\Lambda^T E(\frac12+it,f)}_2$ [{\it loc. cit.}, Prop.\,2] together imply a lower bound for $L(1+it,f,r)$. 
\end{itemize}
\end{remark}

\subsection{Reduction of Proposition~\ref{ess supp} to Proposition \ref{Lapid}}\label{sec:ess sup}

For simplicity we assume that $\Gamma=\PGL_3(\Z)$; the general case for arbitrary congruence $\Gamma$ is similar. We use the following subgroup notation: ${\rm B}_2$ is the standard Borel subgroup of $\GL_2$, ${\rm U}_2$ its unipotent radical, and ${\rm A}_2$ the group of diagonal matrices. We view $\GL_2$ as embedded in $G=\PGL_3(\R)$ via $g\mapsto\left(\begin{smallmatrix}g & \\ & 1\end{smallmatrix}\right)$. The analogous subgroups $U$, $A$, and $K$ of $G$ have the same meaning as in the introduction.

\medskip

\noindent {\sc -- Fourier expansion:} The Fourier--Whittaker expansion of the $L^2$-normalized cusp form $f$ at the unique cusp for $\Gamma=\PGL_3(\Z)$ is given by
\begin{equation}\label{FW-expansion}
f(g)=\sum_m\sum_{[\gamma]}  \rho_f(m)W_\nu\left(d_m\left(\begin{smallmatrix}\gamma & \\ & 1\end{smallmatrix}\right) g\right),
\end{equation}
where $m=(m_1,m_2)$ ranges over all vectors in $\Z_{\neq 0}^2$, $d_m={\rm diag}(m_1 m_2,m_2, 1)$, and $[\gamma]$ ranges over cosets ${\rm U}_2(\Z)\backslash \GL_2(\Z)$. The coefficients $\rho_f(m)$ are certain complex numbers satisfying $\rho_f(1,1)\neq 0$. They grow at most polynomially in $t \max(|m_1|,|m_2|)$, a fact established in \cite{BrumAJM}. (Recall that the spectral parameter $\nu$ of $f$ is in $t\Omega\subset i\mathfrak{a}_{\rm reg}^*$ with $t>1$).

\medskip

\noindent {\sc -- Staying in the cusp:} Writing $g\in\PGL_3(\R)$ in its Iwasawa 
decomposition $g=uak$, we can clearly assume that $k=e$. The hypothesis of the first 
statement in Proposition~\ref{ess supp} is that $a={\rm diag}(y_1 y_2, y_2,1)$ satisfies 
$\min(y_1,y_2)\geq\sqrt{3}/2$ and $\max(y_1,y_2)\gg t$ for a large parameter $t$. 
Theorem~\ref{whitt supp} then states that such $g$ lie in the rapid decay regime for 
$W_{t\nu}$. We would like to say that this is equally true for every translate $d_m 
\left(\begin{smallmatrix}\gamma & \\ & 1\end{smallmatrix}\right)g$ appearing in the 
Fourier-Whittaker expansion above. Now since $d_m$ normalizes $U$, the $A$-part of $d_m 
\left(\begin{smallmatrix}\gamma & \\ & 1\end{smallmatrix}\right)g$ in the Iwasawa 
 decomposition $\PGL_3(\R)=UAK$ is equal to $d_m$ times the $A$-part of 
$\left(\begin{smallmatrix}\gamma & \\ & 1\end{smallmatrix}\right)g$. For the latter 
matrix, we have the following lower bound on the maximum of the roots.

\begin{lem}\label{lem:reduction}
Let $g\in\PGL_3(\R)$ be as above and let $\gamma\in \GL_2(\Z)$. Let $a'={\rm diag}(y_1' y_2',y_2',1)$ be the Iwasawa $A$-part of $\left(\begin{smallmatrix}\gamma & \\ & 1\end{smallmatrix}\right)g$. Then
\[
\max (y_1',y_2') \gg \max (y_1,y_2).
\]
\end{lem}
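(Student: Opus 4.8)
The plan is to make the claimed lower bound explicit by working directly in coordinates. Write $g = uak$ with $u = \left(\begin{smallmatrix} 1 & x & * \\ & 1 & * \\ & & 1\end{smallmatrix}\right)$ and $a = {\rm diag}(y_1 y_2, y_2, 1)$, where by the hypothesis of Proposition~\ref{ess supp} we have $\min(y_1,y_2) \geq \sqrt{3}/2$ and $\max(y_1,y_2) \gg t$; since $K$ is orthogonal it can be dropped. Since $\left(\begin{smallmatrix}\gamma & \\ & 1\end{smallmatrix}\right)$ normalizes the $\GL_2$ block and the Iwasawa $A$-part we want to control is, up to the fixed $d_m$, just the $A$-part of $\left(\begin{smallmatrix}\gamma & \\ & 1\end{smallmatrix}\right) g$, the real content is the following statement about $\GL_2$: if $h = \left(\begin{smallmatrix} \sqrt{y_1} & x/\sqrt{y_1} \\ & 1/\sqrt{y_1}\end{smallmatrix}\right)$ (a representative for the $\GL_2$ contribution, so that $y_1$ is the height of $h$ in $\SL_2(\R)/\SO(2) = \mathbb{H}$, up to harmless normalization) and $\gamma \in \GL_2(\Z)$, then the height $y_1'$ of $\gamma h$ satisfies $y_1' \gg y_1$ whenever $y_1 \geq \sqrt{3}/2$ — and more precisely the relevant statement combines this with the second coordinate $y_2$.

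First I would set up the precise dictionary between the Iwasawa $A$-coordinates $(y_1, y_2)$ of $\PGL_3$ and the geometry, so that the two roots are $\alpha_1(a) = y_1$, $\alpha_2(a) = y_2$ (the simple roots), and $\max(y_1,y_2)$ is what governs depth in the cusp; the normalization $\min(y_i) \geq \sqrt{3}/2$ is exactly the Siegel-domain condition, chosen so that Iwasawa coordinates behave like a fundamental domain. Then I would reduce to the $\GL_2$ sub-problem: multiplying $g$ on the left by $\left(\begin{smallmatrix}\gamma & \\ & 1\end{smallmatrix}\right)$ changes only the upper-left $2\times 2$ Iwasawa data, and the third coordinate of the torus part is pinned by the determinant normalization in $\PGL_3$, so the new roots $(y_1', y_2')$ are expressible in terms of $y_2$ and the single $\GL_2$ height. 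The heart of the matter is then the classical fact about $\SL_2(\Z)$ acting on $\mathbb{H}$: for any $z = x + iy$ with $y$ bounded below by a positive constant (here $\sqrt 3/2$, essentially the injectivity-radius-type lower bound for the modular curve), and any $\gamma \in \SL_2(\Z)$, the imaginary part of $\gamma z$ is $\gg y$. Indeed ${\rm Im}(\gamma z) = y / |cz+d|^2$ with $(c,d)$ a primitive integer vector, and $|cz+d|^2 = (cx+d)^2 + c^2 y^2 \geq c^2 y^2$; if $c \neq 0$ this is $\geq y^2 \geq (\sqrt 3/2) y$, while if $c = 0$ then $|cz+d|^2 = d^2 = 1$ and ${\rm Im}(\gamma z) = y$. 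Either way ${\rm Im}(\gamma z) \geq \min(1, \sqrt{3}/2)\, y = (\sqrt 3/2)\, y$, giving the desired $y_1' \gg y_1$; carrying this through the $\PGL_3$ torus bookkeeping yields $\max(y_1', y_2') \gg \max(y_1, y_2)$.

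The main obstacle is not the $\SL_2$ estimate itself — that is elementary — but the bookkeeping that translates it into the $\PGL_3$ statement: one must track how the two simple-root coordinates $(y_1, y_2)$ transform, keep careful account of the $\GL_2$ versus $\SL_2$ and the projective ($\det = 1$) normalizations so that no spurious factors of $\det \gamma$ or of $y_2$ are dropped, and verify that the case analysis (whether $\max$ is achieved at $y_1$ or $y_2$, and whether the $\GL_2$ height increases or is merely not-too-much-decreased) always lands on the claimed inequality. One subtlety worth isolating: the bound $y_1' \gg y_1$ alone is not enough if $\max(y_1, y_2) = y_2$ and $\gamma$ conspires to shrink things, so one needs that $y_2$ itself, being unaffected in the relevant way or only boundedly affected, still controls $\max(y_1', y_2')$ from below; making this precise is where the Siegel-domain hypothesis $\min(y_1, y_2) \geq \sqrt 3 /2$ is used a second time. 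Once these normalizations are pinned down the inequality falls out with an explicit implied constant depending only on $\sqrt 3/2$.
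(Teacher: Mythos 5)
Your proposal has a genuine gap, and it lies exactly in the step you lean on: the claimed $\SL_2(\Z)$ estimate is false. You assert that for $z=x+iy$ with $y\ge\sqrt 3/2$ and any $\gamma\in\SL_2(\Z)$ one has $\operatorname{Im}(\gamma z)\gg y$. Take $z=100i$ and $\gamma=\left(\begin{smallmatrix}0&-1\\1&0\end{smallmatrix}\right)$; then $\gamma z = i/100$ and the imaginary part has dropped by a factor of $10^4$. In your own derivation the error is a reversed inequality: you correctly show $|cz+d|^2\ge c^2y^2\ge (\sqrt 3/2)\,y$ when $c\ne 0$, but then conclude that $\operatorname{Im}(\gamma z)=y/|cz+d|^2$ is bounded \emph{below} by $(\sqrt 3/2)\,y$; a large denominator gives a small quotient, so what you have actually proved is $\operatorname{Im}(\gamma z)\le 2/\sqrt 3$ in this case, the opposite of what you need.

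This is not a fixable slip within your strategy; the $\GL_2$ reduction itself misses the mechanism that makes the lemma true. The subtlety you flag at the end --- that $y_2$ is ``unaffected in the relevant way or only boundedly affected'' --- is where the real content lives, and that assumption is wrong. A direct computation (equivalently, the paper's Bruhat-decomposition argument) shows that if $\gamma=\left(\begin{smallmatrix}*&*\\ c&d\end{smallmatrix}\right)$ with $c\ne 0$, and $g=ua$ with $u$ having $(1,2)$-entry $x$, then
\[
y_2' \;=\; y_2\sqrt{c^2y_1^2+(cx+d)^2}\;\ge\; |c|\,y_1y_2 \;\ge\; y_1y_2 \;\gg\;\max(y_1,y_2),
\]
the last step using $\min(y_1,y_2)\gg 1$. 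So the $\GL_2$ height $y_1'$ really can collapse to size $y_1^{-1}$, and what rescues the statement is that the \emph{second} root $y_2'$ is multiplied by roughly $y_1$. This is an intrinsically rank-two phenomenon: the two simple-root coordinates trade off against one another under the embedded $\GL_2(\Z)$ action, and any argument that tries to keep $y_1'$ large while treating $y_2'$ as a spectator cannot succeed. The paper's proof makes this explicit by decomposing $\gamma=bwu'$ (Bruhat), computing the roots of $waw$ as $(y_1^{-1},\,y_1y_2)$, and then absorbing the remaining unipotent and discrete factors to get $y_2'=|q|\,y_1y_2\sqrt{1+x^2}\gg\max(y_1,y_2)$.
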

\begin{proof}
If $\gamma\in {\rm B}_2(\Z)$ then there is $k'={\rm diag}(\pm 1,\pm 1)$ such that $\gamma k'\in {\rm U}_2(\Z)$. We see then that $\left(\begin{smallmatrix}\gamma & \\ & 1\end{smallmatrix}\right)g\in UaK$. Thus in this case we in fact have $y_i'=y_i$.

If $\gamma\notin {\rm B}_2(\Z)$ then we use the Bruhat decomposition of $\GL_2(\Z)$ to write $\gamma$ as $bw_0u'$, for some $b\in {\rm B}_2(\Q)$ and $u'\in {\rm U}_2(\Q)$, where $w_0:=\left(\begin{smallmatrix} 0 & 1 \\ 1& 0 \end{smallmatrix}\right)$. Since ${\rm B}_2(\Q)={\rm U}_2(\Q){\rm A}_2(\Q)$ we can assume that $b\in {\rm A}_2(\Q)$, say $b={\rm diag}(\pm 1/q,q)$ for $q\in\Q^\times$. Since $\gamma$ has integer entries one in fact has $q\in\Z-\{0\}$.

Writing $w:=\left(\begin{smallmatrix} w_0 &  \\ & 1 \end{smallmatrix}\right)=\left(\begin{smallmatrix} 0 & 1 & 0\\ 1& 0 & 0 \\
0 & 0 & 1 \end{smallmatrix}\right)$, we have
\[
\begin{pmatrix}\gamma & \\ & 1\end{pmatrix}g=\begin{pmatrix}b & \\ & 1\end{pmatrix}w\begin{pmatrix}u' & \\ & 1\end{pmatrix}ua=\begin{pmatrix}b & \\ & 1\end{pmatrix}\cdot waw\cdot wv,
\]
where $v=a^{-1}\left(\begin{smallmatrix}u' & \\ & 1\end{smallmatrix}\right)ua\in U$. The roots ${}^wy_i$ of $waw$ are ${}^wy_1=y_1^{-1}$ and ${}^wy_2=y_1y_2$. Moreover, if $v=\left(\begin{smallmatrix} 1 & x & *\\ 0 & 1 &*\\ 0 & 0 & 1\end{smallmatrix}\right)$ then $wv$ has Iwasawa $A$-part ${\rm diag}(1/\sqrt{1+x^2},\sqrt{1+x^2},1)$. Thus $y_1'=q^{-2}\, y_1^{-1} (1+x^2)^{-1}$ and $y_2'=|q| y_1y_2 \sqrt{1+x^2}$. The first root $y_1'$ can be very small, but since $|q|\geq 1$, $\sqrt{1+x^2}\geq 1$, and $\min(y_1,y_2)\gg 1$ we have $y_2'\gg\max(y_1,y_2)$ as desired.
\end{proof}

\medskip
\noindent {\sc -- Conclusion of the proof:}
We continue with the reduction of the first statement of Proposition~\ref{ess supp} to 
Proposition~\ref{Lapid}. Recall that $\nu\in t\Omega\subset i\mathfrak{a}_{\rm reg}^*$. 
By hypothesis, we have $\max (y_1,y_2)\gg t^{1+\epsilon}$, for some $\epsilon>0$.

We return to the Fourier--Whittaker expansion \eqref{FW-expansion} of $f$. Note that the 
maximum of the roots of $d_m\left(\begin{smallmatrix}\gamma & \\ & 
1\end{smallmatrix}\right) g$ is equal to $\max(m_1y'_1,m_2y'_2)$, and that, since $g$ 
is assumed to lie in a Siegel domain, we have $y_1,y_2\gg 1$. We may therefore 
apply Proposition~\ref{Lapid} (as well as the aforementioned bound on $\rho_f(m)$) to every term in \eqref{FW-expansion}. In this way we obtain
\[
f(g)\ll_N t^{N+O(1)} \sum_m  (m_1m_2)^{O(1)} \sum_{[\gamma]}  \max(m_1y'_1,m_2y'_2)^{-N+O(1)}.
\]
We shall prove that the double sum is at most $\max (y_1,y_2)^{-N+O(1)}$. In this way, we will have established that
\begin{equation}\label{decay-claim}
f(g)\ll_N \big(t\max (y_1,y_2)\big)^{O(1)} \big(\max (y_1, y_2)/t\big)^{-N},
\end{equation}
proving rapid decay in the region $\max(y_1,y_2)\gg t^{1+\epsilon}$, as desired.

We begin by treating, for every fixed $[\gamma]$, the sum over $m$. We claim that, for $N$ large enough,
\[
\sum_m (m_1m_2)^{O(1)} \max(m_1y'_1,m_2y'_2)^{-N+O(1)}\ll  \max(y'_1,y_2')^{-N+O(1)}.
\]
To see this, assume that $y_1'\geq y_2'$. Breaking up into ranges yields
\[
{y'_2}^{-N+O(1)}\sum_{m_1 y_1'\leq m_2y_2'} m_1^{O(1)}m_2^{-N+O(1)}+{y'_1}^{-N+O(1)}\sum_{m_1y_1'>m_2y_2'} m_2^{O(1)} m_1^{-N+O(1)}.
\]
The second sum can be estimated elementarily as
\[
{y'_1}^{-N+O(1)}\sum_{m_1\geq 1}m_1^{-N+O(1)}\sum_{m_2< m_1y_1'/y_2'}m_2^{O(1)}= {y'_1}^{-N+O(1)}(y_1'/y_2')^{O(1)}\sum_{m_1\geq 1}m_1^{-N+O(1)},
\]
which, using $y_2'\gg1$, is at most ${y'_1}^{-N+O(1)}$. We treat the first sum by breaking it up into dyadic pieces, as
\[
{y'_2}^{-N+O(1)}\sum_{m_1\geq 1}m_1^{O(1)}\sum_{M\geq 0} \sum_{m_2\sim 2^M m_1y_1'/y_2'} m_2^{-N+O(1)}. 
\]
This is then bounded by
\[
{y_1'}^{-N+O(1)}(y_1'/y_2')^{O(1)}\sum_{m_1\geq 1}m_1^{-N+O(1)}\sum_{M\geq 0}2^{M(-N+O(1))}\ll {y_1'}^{-N+O(1)}. 
\]
A similar argument applies when $y_2'\geq y'_1$. This proves the claim, and we get
\[
f(g)\ll_N t^N \sum_{[\gamma]}  \max(y'_1,y_2')^{-N+O(1)}.
\]

We distinguish two ranges, according to the size of the roots of $\left(\begin{smallmatrix}\gamma & \\ & 1\end{smallmatrix}\right)$ in the $A$-part of its Iwasawa decomposition. We label these roots as $y_1(\gamma), y_2(\gamma)$, and note that, unlike $y_1',y_2'$, they do not depend on $g$. In the first range we consider the set of $[\gamma]$ such that $\max (y_1(\gamma), y_2(\gamma)) \le \max(y_1,y_2)^M$, for a real $M>0$ to be chosen below. This is a finite set of cardinality at most $\max(y_1,y_2)^{c_1M}$, for some constant $c_1$. Now $\max(y'_1,y'_2) \gg \max(y_1,y_2)$, uniformly for all $\gamma$, by Lemma \ref{lem:reduction}. Thus the sum over all elements $\gamma$ in this range is less than $\max(y_1,y_2)^{c_1M - N+O(1)}$.

In the second range we consider the tail of the sum, consisting of elements $[\gamma]$ 
such that $\max (y_1(\gamma), y_2(\gamma))\ge \max(y_1,y_2)^M$. We apply an 
Iwasawa decomposition of the element $\left(\begin{smallmatrix}\gamma & \\ & 
1\end{smallmatrix}\right) g$ and find that its $A$-part is the product of the $A$-part of 
$\left(\begin{smallmatrix}\gamma & \\ & 1\end{smallmatrix}\right)$ times the $A$-part of 
$kg$ for some element $k\in K$ (specifically $k$ is the $K$-part of 
$\left(\begin{smallmatrix}\gamma & \\ & 1\end{smallmatrix}\right)$). The roots of the 
$A$-part of $kg$ are greater than $\max(y_1, y_2)^{-c_2}$, for some constant $c_2$, 
independent of $\gamma$. The contribution of this second range is thus bounded by
\[
\max(y_1, y_2)^{c_2N}\sum_{\substack{\max\{y_1(\gamma), y_2(\gamma)\}\\ \ge \max(y_1,y_2)^M}} \max (y_1(\gamma), y_2(\gamma))^{-N+O(1)}.
\]
Breaking up this last sum into dyadic intervals, we obtain
\[
\sum_{2^n\ge \max(y_1,y_2)^M} \sum_{\max (y_1(\gamma), y_2(\gamma))\sim 2^n}\max (y_1(\gamma), y_2(\gamma))^{-N+O(1)}\ll \sum_{2^n\ge \max(y_1,y_2)^M} 2^{n c_1}2^{n(-N+O(1))},
\]
where $2^{nc_1}$ is a bound on the total number of $[\gamma]$ in the dyadic interval. As long as $N>c_1+O(1)$, this geometric series converges. The total contribution from this range is then
\[
 \max(y_1, y_2)^{c_2N+M(c_1-N+O(1))}.
\]
By choosing $M$ large enough so that the exponent is negative, this is of the desired form.

We have obtained a rapid decay bound for both ranges, completing the proof of \eqref{decay-claim}.

\bigskip

\noindent {\sc -- Proof of \eqref{bad-upper}:} 
The proof of \eqref{bad-upper} is based on an explication of the dependence of the implied constant on the injectivity radius of $\Gamma\backslash S$ in Sarnak's bound~\eqref{d-r-omega}.

For the next few paragraphs, we take $G$ to be a connected split semi-simple Lie group without compact factors, and $K$ a maximal compact subgroup of $G$. Then the associated globally Riemannian symmetric space $S=G/K$ is of non-compact type. Let $\Gamma$ be an arithmetic non-uniform lattice in $G$, so that the quotient $\Gamma\backslash S$ is non-compact. (We will specialize to $\Gamma\backslash S_3$ momentarily.) For $p\in\Gamma\backslash S$, and $R>0$ smaller than the local injectivity radius about $p$, let $B(p,R)$ denote the geodesic ball of radius $R$. When $(\Delta+\lambda)f=0$, a direct inspection of the proof of Sarnak~\cite{SaMo} yields
\begin{equation}\label{uniform Sarnak}
|f(p)|\leq C \bigg(\int_{B(p,R)}|\omega_\lambda(x)|^2dx\bigg)^{-1/2}\bigg(\int_{B(p,R)}|f(x)|^2dx\bigg)^{1/2},
\end{equation}
where  $\omega_\lambda$ is the unique spherical function on $G$ about $p$ having the same $\mathscr{D}_G(S)$-eigenvalues as $f$ (and thus of eigenvalue $\lambda$) and normalized so that $\omega_\lambda(p)=1$. Going high in the cusp, we can find $p\in\Gamma\backslash S$ with arbitrarily small injectivity radius; in particular we can take $0<R<1/\sqrt{\lambda}$. On such balls, the spherical function $\omega_\lambda$ is $\asymp 1$ and one has
\begin{equation}\label{eq:non-osc}
\int_{B(p,R)}|\omega_\lambda(x)|^2dx\asymp {\rm vol}(B(p,R))\asymp R^d \qquad (0<R<1/\sqrt{\lambda}),
\end{equation}
where $d$ is the dimension of $S$.

We shall now describe a truncation of $\Gamma\backslash S$ up to height $T$. Recall 
the notation from \S\ref{sec:root-notation}. We assume that $B$ is defined over $\Q$. A 
Siegel set with respect to $B$ is a subset of $G$ of the form $\mathfrak{S}=\omega A_c 
K$, where $\omega$ is a relatively compact subset of $U$ and $A_c=\{a\in A: 
\alpha(a)>\frac1c\; 
\forall \alpha\in\Pi\}$. By reduction theory, there is $c>1$ and a finite 
subset $\Xi$ of $G(\Q)$ such that  $G=\Gamma\Xi\mathfrak{S}$. For $T> c$, let 
$A_c^T=\{a\in A: \frac1c<\alpha(a)<T\;\forall\alpha\in\Pi\}$ and $\mathfrak{S}^T=\omega 
A_c^TK$. Then we denote by $(\Gamma\backslash S)^{\leq T}$ the image of 
$\Gamma\Xi\mathfrak{S}^T$ in $\Gamma\backslash S$.

We claim that the injectivity radius on $(\Gamma\backslash S)^{\leq T}$ is at least $1/T^r$, where $r=\#\Pi$ is the rank of $G$. To see this, let $p$ lie in the truncation $(\Gamma\backslash S)^{\leq T}$, so that $p$ is the image of $xuak\in G$ under the natural projection map, where $x\in\Xi$, $u\in \omega$, $a\in A_c^T$, $k\in K$. Suppose that there is $g\in G$ with ${\rm dist}(e,g) \ll 1/T^r$ and $\gamma \in \Gamma$ such that $pg=\gamma p$. Our goal is to prove that $\gamma =e$. We write the equality $pg=\gamma p$ as $a.kgk^{-1}.a^{-1}=(xu)^{-1}\gamma xu$ and observe that ${\rm dist}(e,kgk^{-1}) \ll 1/T^r$ since $k\in K$ varies in a compact. The conjugation by $a$ is described by its roots $\alpha(a)$ for $\alpha\in\Pi$; since $a\in A_c^T$, the largest dilation is $T^r$. Since $xu$ also varies in a compact this implies ${\rm dist}(e,\gamma) \ll 1$. Thus if the constant is chosen small enough, $\gamma =e$ as desired.

We may therefore bound the value at any point $p\in (\Gamma\backslash S)^{\leq T}$ by its 
$L^2$-norm over the geodesic ball of radius $1/T^r$ about $p$. In particular, it follows 
from \eqref{uniform Sarnak} and \eqref{eq:non-osc} that for any $p\in (\Gamma\backslash 
S)^{\leq T}$, with $T=\lambda^{1/2+\epsilon}$, we have 
\begin{equation*}
|f(p)| \ll_\epsilon \lambda^{dr/4+\epsilon}\pnorm{f}_2.
\end{equation*}

We now specialize to the case $S=S_3$, $\Gamma$ a congruence subgroup of $\GL_3(\Z)$ and $f$ a Hecke-Maass cusp form. Since by the first half of Proposition \ref{ess supp} the size of $f$ on the complement of $\Gamma\backslash S_3^{\leq T}$, where $T=\lambda^{1/2+\epsilon}$, is smaller than any power of $\lambda$, the bound \eqref{bad-upper} is proved, where we use $d=\dim S_3=5$ and $r=2$.

\begin{remark}\label{rem:upper3}
	 It would be interesting to investigate more the essential support of cusp forms in higher rank, and for general groups.
Although Theorem \ref{whitt supp} is valid for arbitrary $n$, it requires additional work to extend Proposition \ref{ess supp} to all $n$.
 The Fourier expansion of $f$ on $\Gamma\backslash S_n$ still holds, but Lemma \ref{lem:reduction} is not true for $n\geq 4$. This can be seen in the following example.
Let $\gamma=\left( 
\begin{smallmatrix}
0 & 1 & 0 \\
1 & N & -1 \\
0 & N & -1 
\end{smallmatrix}
\right)
$, and $g=a\in A$ with roots $y_i$. It can be verified that 
\[
y'_1 = y_1^{-1} (1+N^2y_2^2)^{-1/2},\quad
y'_2 = y_1(N^2+y_2^{-2})^{-1/2},\quad
y'_3 = y_3(1+N^2y_2^2)^{1/2}.
\]
Letting $y_2=y_3=1$ and $y_1$ large of size about $N^2$ we see that Lemma~\ref{lem:reduction} is not valid in this case.
\end{remark}

\section{Rapid decay estimates} \label{sec:main-proof}
In this section we establish several estimates for Whittaker functions with large eigenvalue. In the first two subsections, we give quantitative information on the rapid decay regime of spherical Whittaker functions in the general setting of split semisimple real Lie groups. In the third subsection, we use these results to prove Theorem \ref{general tau} in the case of $\GL_n(\R)$.

\subsection{Rapid decay} Let $W_\nu$ be a spherical Whittaker function on a split 
semisimple real Lie group. The following proposition gives the rapid decay of $W_\nu(a)$ 
for $a$ large with respect to $\nu$. The proof is through repeated integration by parts 
and a convolution identity. This kind of argument is relatively standard, e.g., in 
estimates of Eisenstein series (see \cite[\S4]{Arthur78}). 

\begin{prop}\label{Lapid}
For every $\nu \in i \mathfrak{a}^*$, $N\ge 0$, and $a\in A$,
\begin{equation*}
|W_{\nu}(a)|\ll_{N,\epsilon}  \delta(a)^{\frac12} \|a\|^\epsilon 
\|\nu\|^{O(1)}(\max\limits_{\alpha\in\Pi} 
\alpha(a)/\|\nu\|)^{-N}.
\end{equation*}
\end{prop}

\begin{proof} 
For any $\varphi\in C_c^\infty(K \backslash G /K)$ we have 
$\widehat\varphi(\nu)W_\nu=W_\nu\star\varphi$, where $\varphi\mapsto\widehat\varphi(\nu)$ 
is the spherical transform. We decompose the following integration using the Iwasawa 
coordinates to get
\begin{align*}
(W_\nu\star\varphi)(a)&=\int_GW_\nu(ag)\varphi(g)dg=\int_A\int_U W_\nu(aua_1)\varphi(ua_1)\delta(a_1)^{-1}du\, da_1\\
&=\int_A\int_U \psi_a(u)W_\nu(aa_1)\varphi(ua_1)\delta(a_1)^{-1}du\, da_1,
\end{align*}
where $\psi_a(u)=\psi(aua^{-1})$.
Fix $c\in 
\mathbb{R}_{\ge 1}$, and let $A_0=A_c^c \subset A$ be defined by 
inequalities $c^{-1} \leq \alpha(a)\leq c$ for $\alpha\in\Pi$.
We choose $\varphi$ of sufficient small compact support, such that $\varphi(ua_1)$ 
vanishes for all $u\in U$ and $a_1\in A - A_0$. Then
\begin{equation}\label{W-convo}
(W_\nu\star\varphi )(a)= \int_{A_0} W_\nu(aa_1)\delta(a_1)^{-1} \int_U \psi_a(u) 
\varphi(ua_1)du\, da_1.
\end{equation}

For $G=\PGL_n(\R)$, an application of the Cauchy-Schwarz inequality in \eqref{W-convo} yields
\[
|\widehat 
\varphi(\nu)|^2|W_\nu(a)|^2\le\int_{A_0}\abs{W_\nu(aa_1)}^2\delta(a_1)^{\epsilon-1}da_1
\cdot
\int_{A_0}\left|\int_U\psi_a(u)\varphi(ua_1) du\right|^2\delta(a_1)^{-1-\epsilon}da_1.
\]
We change variables to find that the first integral is
\[
\delta(a)^{1-\epsilon}\int_{aA_0}\abs{W_\nu(a_1)}^2\delta(a_1)^{\epsilon-1}da_1
=
\delta(a)^{1-\epsilon}
\int_{aA_0}\abs{W_\nu(
\begin{pmatrix}
a_1 & 0 \\
0 & 1
\end{pmatrix}
)}^2
\delta(a_1)^{\epsilon-1} \det(a_1)^{\epsilon-1} da_1.
\]
We write $\delta(a_1)=||a||^{O(1)}$, so that the integral is
\[
\leq \delta(a)^{1-\epsilon} ||a||^{O(\epsilon)}
\int_{A}\abs{W_\nu(
\begin{pmatrix}
a_1 & 0 \\
0 & 1
\end{pmatrix}
)}^2
\delta(a_1)^{-1} \det(a_1)^{\epsilon-1} da_1
\leq
\delta(a) \|a\|^{O(\epsilon)} 
\|\nu\|^{O(1)},
\]
by the Stade formula~\eqref{clean stade} applied to $s=\epsilon$. Hence,
\begin{equation}\label{GLn-case}
|\widehat \varphi(\nu)|^2|W_\nu(a)|^2\ll
\delta(a)\|a\|^{O(\epsilon)}
\|\nu\|^{O(1)}
\int_{A_0}\left|\int_U\psi_a(u)\varphi(ua_1) du\right|^2\delta(a_1)^{-1}da_1.
\end{equation}

For general $G$, we apply absolute values to \eqref{W-convo} and treat the integral over 
$A_0$ by bounding $W_\nu(aa_1)$ pointwise as follows. Using the integral 
representation 
\eqref{Jacquet-int} we have
\[
|W_{\nu+\epsilon 
\rho}(a)|\leq \int_U  \delta(\mathsf{w}ua)^{\frac12+\epsilon} du
=
\delta(a)^{\frac12-\epsilon}\int_U \delta(\mathsf{w}u)^{\frac12+\epsilon} du.
\]
The Gindinkin-Karpelevi\u{c} formula implies the convergence of the above integral 
(see~\cite[Thm.~2.8]{Jacquet67} 
or~\cite[(3.57)]{DKV:spectra}), from which one deduces $W_{\nu+\epsilon 
\rho}(a)\ll_\epsilon
\delta(a)^{\frac12-\epsilon}$. The functional 
equation~\cite[Prop.~3.3 and 
(4.2.3)]{Jacquet67}, combined with the Phragm\'en-Lindel\"of maximum principle imply that 
$|W_{\nu}(a)|\ll_\epsilon 
\delta(a)^{\frac12} \|a\|^\epsilon \|\nu\|^{\epsilon}$. To apply the 
Phragm\'en-Lindel\"of 
principle, we need that 
$W_\nu$ be of finite order, which is established in~\cite{McKee:order-one}. Inserting 
this bound, we 
obtain
\begin{equation}\label{general-G-case}
|\hat\varphi(\nu)W_\nu(a)|\ll \delta(a)^{\frac12} \|a\|^\epsilon \|\nu\|^{\epsilon} 
\int_{A_0} 
\bigg|\int_U \psi_a(u) 
\varphi(ua_1)du\bigg|\, da_1.
\end{equation}

In either \eqref{GLn-case} or \eqref{general-G-case}, we must still bound $\int_U\psi_a(u)\varphi(ua_1) du$, which we do for a specific choice of the function $\varphi$. For any 
$\varphi_0\in C_c^\infty(A_{\mathrm{reg}})$, put
\[
\varphi_\nu(k_1ak_2)= \varphi_0(a)\sum_{w\in W} \nu(waw^{-1}) , \quad k_1,k_2\in K,\ 
a\in 
A.
\]
From~\cite[Lem.~6.3 and Prop.~6.9]{DKV:spectra}, there exists $\varphi_0\in 
C_c^\infty(A_{\mathrm{reg}})$ of arbitrarily small support such that $|\widehat 
\varphi_\nu(\lambda)|\gg \|\nu\|^{-O(1)}$ for all
$\lambda, \nu\in i\mathfrak{a}^*$ such that $\pnorm{\lambda-\nu}\leq 1$. In particular 
$|\widehat \varphi_\nu(\nu)|\gg \|\nu\|^{-O(1)}$. We claim that for 
any $a\in A$ and any
integer $N\ge 1$,
\[
\int_U \psi_a(u) \varphi_\nu(ua_1) du \ll_N \pnorm{\nu}^N (\max\limits_{\alpha\in\Pi} 
\alpha(a))^{-N},
\]
where the implied constant depends only on $N$ and the choice of $\varphi$. This will 
finish the proof.

For any $\alpha\in\Pi$ and $0\neq X_\alpha\in\mathfrak{u}_\alpha$ then the first derivative of the additive character is
\[
X_\alpha\cdot \psi_a(u)= i\alpha(a) \psi_a(u).
\]
For $a\in A$ let $\alpha_{\max}$ be such that $\alpha_{\max} (a)=\max\limits_{\alpha\in\Pi} \alpha(a)$ and let $X_{\max}$ be an element of unit norm in the root space $\mathfrak{u}_{\alpha_{\max}}$. Integrating by parts $N$ times the integral is, up to a sign, equal to
\begin{equation}\label{IPP-convo}
\alpha_{\max}(a)^{-N} \int_U \psi_a(u) \varphi_\nu(X_{\max}^N;ua_1) du,
\end{equation}
where $X^N_{\max}$ is viewed as an element in $\mathscr{U}(\mathfrak{g})$. Now using the definition of $\varphi_\nu$ we have $\varphi_\nu(X_{\max}^N;ua_1) \ll
\pnorm{\nu}^N $ where the implied constant depends only on $\varphi$.
\end{proof}

\subsection{Precise decay regime}\label{s:decay-regime}
In this paragraph we give an alternative description of the rapid decay regime of the Whittaker function relative to Proposition \ref{Lapid}. We are again assuming here that $G$ is an arbitrary split semisimple real Lie group. 

The idea here is standard: the Whittaker function is given as an oscillatory integral, 
and where there are no critical points one has rapid decay (again by integration by 
parts). To make the link with later sections, we express the rapid decay regime in terms 
of the fibers of an associated Lagrangian mapping $\Lambda_\nu\rightarrow S$ introduced 
in \S\ref{Whitt-structures}. 

We begin with the following lemma which establishes~\eqref{W-form2}. Recall our 
conventions that
\[
\nu(a) = e^{\langle \nu,\log a \rangle}\quad\text{and}\quad\ell_a(u)=\Ad^*_a\ell_1(u)=\ell_1(aua^{-1}).
\]

\begin{lem}\label{c-o-v}
For any $\nu\in\mathfrak{a}_\C^*$ and $a\in A$ we have
\[
W_\nu(a)=\delta(a)^{1/2}\nu(\mathsf{w}a\mathsf{w})\int_U 
\delta(\mathsf{w}u)^{1/2}e^{i(B(H_\nu,H(\mathsf{w}u))-\ell_a(u))}du.
\]
\end{lem}
\begin{proof}
We have
\[
H(\mathsf{w}ua)=H(\mathsf{w}a\mathsf{w}\cdot \mathsf{w}\cdot 
a^{-1}ua)=H(\mathsf{w}a\mathsf{w})+H(\mathsf{w} a^{-1}ua).
\]
Since $\delta(\mathsf{w}a\mathsf{w})=\delta(a)^{-1}$, it follows that 
$\delta(\mathsf{w}ua)=\delta(a)^{-1}\delta(\mathsf{w} 
a^{-1}ua)$. In view of~\eqref{Jacquet-int}, we deduce
\[
W_\nu(a)=\delta(a)^{-1/2}\nu(\mathsf{w}a\mathsf{w})
\int_U \delta(\mathsf{w} 
a^{-1}ua)^{1/2}e^{i(B(H_\nu,H(\mathsf{w}a^{-1}ua))-\langle\ell_1,u\rangle)}du.
\]
Applying the automorphism $u\mapsto a^{-1}ua$, whose Jacobian is $\delta(a)$, we deduce the result.
\end{proof}

We now turn toward the estimation of the integral in Lemma \ref{c-o-v} by means of an 
integration by parts. Contrary to the integration in \eqref{IPP-convo}, the $U$-integral 
in 
Lemma \ref{c-o-v} does not involve a compactly supported amplitude function. To deal with 
the subtle issues of convergence inherent in such a non-compact setting, we introduce the 
following hypothesis. Recall from \S\ref{s:spherical-rep} that for $t>0$ and $a\in A$ we 
denote by $ta$ the unique element in $A$ whose simple roots are those of $a$ scaled by 
$t$.

\begin{hyp}\label{hypothesis} 
For every $\nu\in i\mathfrak{a}^*$ and $a\in A$, there 
exists a smooth compactly supported function $\alpha\in \mathcal{C}_c^\infty(U)$ such 
that for every $N\ge 1$,
\[
\int_U \delta(\mathsf{w}u)^{1/2} (1-\alpha(u)) e^{it F_\nu(u,a)}  du \ll_{N,\nu,a} t^{-N},
\]
where the multiplicative constant depends continuously on $\nu,a$.
\end{hyp}

It would take us too far afield to verify this hypothesis in the present article; we hope to address this question in a subsequent work.

\begin{prop}\label{rapid-fiber}
Assume Hypothesis~\ref{hypothesis}.
Let $\nu\in i\mathfrak{a}^*$ be non-zero and $X\in\mathfrak{a}$ be such that $e^X\in S$ lies outside the image of $\Lambda_\nu\rightarrow S$. Then for $t>0$ large enough we have
\[
W_{t\nu}(te^X)\ll_{N,\nu,X}t^{-N}
\]
for every $N\geq 1$.
\end{prop}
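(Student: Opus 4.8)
The approach is non-stationary phase applied to the oscillatory integral defining $W_{t\nu}$, with the hypothesis on $e^X$ entering exactly as the absence of critical points.

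\emph{Step 1: normalise the integral.} Starting from the Jacquet integral
\[
W_{t\nu}\bigl(e^{(\log t)X}\bigr)=\int_U \delta\bigl(\mathsf{w}u\,e^{(\log t)X}\bigr)^{1/2}\,e^{\,it\,B(H_\nu,\,H(\mathsf{w}u\,e^{(\log t)X}))}\,\overline{\psi_1(u)}\,du,
\]
I would substitute $u\mapsto e^{(\log t)X}u\,e^{-(\log t)X}$ and use that $H(\cdot)$ is additive under left translation by $A$, that $\delta(g)=e^{2\langle\rho,H(g)\rangle}$, and that $\mathsf{w}e^{(\log t)X}=e^{(\log t)\mathsf{w}X}\mathsf{w}$. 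Up to a unimodular factor and a prefactor $c_t$ bounded by a fixed power of $t$, this puts $W_{t\nu}(e^{(\log t)X})$ in the form
\[
c_t\int_U \delta(\mathsf{w}u)^{1/2}\,e^{\,it\,F_\nu(u,\,e^{X})}\,du,
\]
where $F_\nu$ is the Whittaker phase function \eqref{def:F}. This is the same homogeneity already exploited for $\GL_n$ in \S\ref{sec:main-proof}: the simultaneous scaling of the eigenvalue parameter $\nu\mapsto t\nu$ and the dilation of the base point become the single large parameter $t$ multiplying the $t$-independent phase $F_\nu(\cdot,e^X)$. (The precise dilation of the base point dictated by the normalisation of $\mathfrak a$ has to be tracked here; in any case one lands on the phase whose fiber critical set defines $\Lambda_\nu$.)

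\emph{Step 2: no critical points, then integrate by parts.} By construction ${\rm Im}(\Lambda_\nu\to S)$ is the image of the fiber critical set $\Sigma_\nu$ under the projection $U\times S\to S$, so the hypothesis $e^X\notin{\rm Im}(\Lambda_\nu\to S)$ says exactly that $u\mapsto F_\nu(u,e^X)$ has no critical point on $U$. On any fixed compact $C\subset U$ compactness gives $\inf_{C}\lvert d_uF_\nu(u,e^X)\rvert>0$; on the complement of a large compact set the linear character term makes $\lvert d_uF_\nu(u,e^X)\rvert\to\infty$, since the Iwasawa term $B(H_\nu,H(\mathsf{w}u))$ has bounded differential by Lemma \ref{lem:DKV} (as $\upkappa(\mathsf{w}u)\in K$). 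Hence $\lvert d_uF_\nu(u,e^X)\rvert$ is bounded below on all of $U$ and grows at infinity. Repeated integration by parts against the standard first-order operator $L$ with $LF_\nu\equiv 1$, so that $Le^{itF_\nu}=it\,e^{itF_\nu}$, gains a factor $t^{-1}$ per step and replaces the amplitude $\delta(\mathsf{w}u)^{1/2}$ by finitely many smooth multiples of it divided by powers of $\lvert d_uF_\nu\rvert$ and its derivatives; because the latter grow at infinity, these new amplitudes are absolutely integrable over $U$ and the boundary terms at infinity vanish. After $N$ steps one obtains $\int_U\delta(\mathsf{w}u)^{1/2}e^{itF_\nu(u,e^X)}du\ll_{N,\nu,X}t^{-N}$, and multiplying by $c_t$ finishes the proof.

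\emph{Main obstacle.} The delicate point is the integration by parts at infinity of $U$: $\delta(\mathsf{w}u)^{1/2}$ is only conditionally integrable over $U$ (this is why the Jacquet integral is not absolutely convergent), so the tail cannot be dominated by the amplitude, and the integration by parts has to be arranged so that each step genuinely improves integrability, exploiting the growth of the character derivative there. A convenient way to insulate this issue — and to see the dependence of the implied constant only through $\nu$ and $X$ — is to first apply Proposition \ref{Lapid} to dispose of the range $\max_{\alpha\in\Pi}\alpha(e^{(\log t)X})\gg t$; on the complementary range the base point stays in a fixed compact subset of $A$, and there the remaining estimate is the clean non-stationary phase argument above.
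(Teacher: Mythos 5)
Your overall strategy is the paper's own: read the hypothesis $e^X\notin{\rm Im}(\Lambda_\nu\to S)$ as the absence of fiber-critical points for $u\mapsto F_\nu(u,e^X)$, and get rapid decay by integration by parts. But the justification in Step~2 for controlling the tail of the integral is wrong. You claim $|d_uF_\nu(u,e^X)|\to\infty$ as $u\to\infty$ in $U$; in fact it is bounded above uniformly. The character term $\langle\ell_1,e^Xue^{-X}\rangle$ is linear in the abelianisation coordinates and independent of the commutator coordinates of $U$, hence has constant differential; and by Lemma~\ref{lem:DKV} the differential of the Iwasawa term $B(H_\nu,H(\mathsf{w}u))$ is $Y\mapsto\langle\nu,E_\mathfrak{a}(Y^{\upkappa(\mathsf{w}u)^{-1}})\rangle$, bounded by $\|\nu\|$ since $\upkappa(\mathsf{w}u)\in K$. (For $\GL_2(\R)$ one can check the derivative of the phase tends to a nonzero \emph{constant}, namely $-\alpha(e^X)$, not to infinity.) So a single global integration by parts does not produce an absolutely integrable amplitude ``because the phase derivative grows at infinity''; that premise is false and the ensuing convergence claim is circular. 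You flag the tail control as the ``main obstacle,'' but the proposed escape via Proposition~\ref{Lapid} does not help: for fixed $t$ and $X$ the base point $e^{(\log t)X}$ is a single point, not a range to be split; it does not stay in a fixed compact of $A$ unless $X=0$; and Proposition~\ref{Lapid} requires $\min_\alpha\alpha(a)\gg 1$, which fails whenever $\langle\alpha,X\rangle<0$ for some $\alpha\in\Pi$ --- a case the present proposition allows.

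The paper's actual mechanism (spelled out in the proof of Theorem~\ref{n=3 uniform statement}, to which the proof of this proposition defers) is a dyadic partition of $U$ into shells $K_n$ of radius $N=2^n$. On each shell one uses $\|d_uF_\nu\|^{-1}=O(1)$ uniformly (bounded \emph{below} suffices; no growth is claimed), ${\rm vol}(K_n)\asymp N^{\dim U}$, and, crucially, ${\rm Sob}_{k,\infty}\bigl(\delta(\mathsf{w}\cdot)^{1/2}\chi_n\bigr)\ll N^{-k-c}$ for some $c>0$, the extra gain $N^{-c}$ coming from the polynomial decay of the half-density $\delta(\mathsf{w}u)^{1/2}$ and its derivatives on the shell (not from the phase). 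After $k$ integrations by parts the $n$-th shell contributes $O(t^{-k}N^{\dim U-k-c})$, which sums over $n$ once $k$ is large enough. The decay is driven by the amplitude, not the phase derivative; your argument needs this dyadic localisation, with the amplitude supplying the convergence, to close.
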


\begin{proof}
We have $\ell_{te^X}(u)=t\ell_{e^X}(u)$ and $H_{t\nu}=tH_\nu$ so that
\[
B(H_{t\nu},H(\mathsf{w}u))-\ell_{te^X}(u)=t(B(H_\nu,H(\mathsf{w}u))-\ell_{e^X}(u))=tF(u,e^X).
\]
From the preceding lemma, and the scaling relation \eqref{a-scaling}, we have
\[
|W_{t\nu}(te^X)|=t^{{\rm ht}(G)/2}\delta(e^X)^{1/2}\bigg|\int_U \delta(\mathsf{w}u)^{1/2}e^{itF_\nu(u,a)}du\bigg|.
\]
Since $e^X$ lies outside the image of $\Lambda_\nu$, the phase function $u\mapsto F_\nu(u,e^X)$ has no critical points. 
Rapid decay follows from Hypothesis~\ref{hypothesis} and repeated integration by parts. 
\end{proof}

We shall see in \S\ref{associated-asymp} that (under a regularity assumption on $\nu$) 
for every $X\in \mathfrak{a}$ satisfying
\begin{equation}\label{light-zone-ineq}
\sum_{\alpha\in \Pi} e^{2\langle \alpha, X \rangle} > \|\nu\|^2,
\end{equation}
we have that $e^X$ lies outside the image of $\Lambda_\nu\rightarrow S$.
This allows one to compare Proposition~\ref{Lapid} and Proposition~\ref{rapid-fiber}.

\subsection{Proof of Theorem \ref{general tau} for $G=\GL_n(\R)$}\label{GLn-argument}
In this subsection we prove Theorem \ref{general tau} in the special case of $\PGL_n(\R)$, with the loss of $\varepsilon$ in the exponent. 
Fix an open bounded subset $\Omega\subset i\mathfrak{a}^*_{\text{reg}}$.
Then $\R_{>0} \Omega$ is an open cone of regular spectral parameters.
For $\nu\in \R_{>0}\Omega$,  we recall from \S\ref{s:spherical-rep} that $\lambda_{\nu}$ is the associated Laplacian eigenvalue. We want to prove that 
\[
\|W_\nu\|_\infty = \sup_{g\in G}|W_\nu(g)|=\sup_{a\in A}|W_\nu(a)| \gg_\varepsilon 
\lambda^{\frac{c(G)}{2}-\varepsilon}_\nu.
\] 
The argument will combine Proposition \ref{Lapid} (specialized to $\PGL_n(\R)$) with the Stade formula (see \eqref{clean stade} below).

The unramified principal series representation $\pi_\nu$ and the Whittaker model 
$\mathcal{W}(\pi_\nu,\psi)$ come equipped with canonically normalized $G$-invariant inner 
products (see \S\ref{sec:intro:outline}). The Jacquet-Whittaker function $W_\nu$ is the 
image of a unitary intertwining of the $L^2$-normalized $K$-fixed vector in $\pi_\nu$. We 
deduce that $\pnorm{W_\nu}_2=1$. 

Let $\Psi(s,W_\nu,\overline{W_\nu})$ denote the integral
\[
\Gamma_\R(ns)\int_{T_{n-1}} \left|W_\nu
\begin{pmatrix}
a & 0 \\
0 & 1
\end{pmatrix}
\right|^2 \det(a)^{s-1}\delta(a)^{-1}da.
\]
By the Stade formula \cite{Stade02}, we have for some $\eta$ depending on $n$,
\begin{equation}\label{clean stade}
\Psi(s,W_\nu,\overline{W}_\nu)=2^\eta\frac{L_\R(s,\pi_\nu\times\widetilde\pi_\nu)}{L_\R(1,\pi_\nu\times\widetilde\pi_\nu)}.
\end{equation}
Here, the Rankin-Selberg local $L$-function is
\begin{equation}\label{RS-def}
L(s,\pi_\nu\times \widetilde\pi_\nu)=\prod^n_{i=1}
\prod^{n}_{j=1}
\Gamma_\R(s+\mu_i-\mu_j),
\end{equation}
for certain $\mu_i\in i\R$, sometimes called the {\it Langlands parameters} of $\pi$. Since the central character of $\pi$ is trivial, we have $\sum_i \mu_i=0$. Applying Stirling's formula to the quotient of Gamma factors we obtain
\[
\Psi(\sigma,W_\nu,\overline{W}_\nu)\asymp \prod_{i\neq j}(1+|\mu_i-\mu_j|)^{(\sigma-1)/2}.
\]
Since $\nu\in \R_{>0}\Omega \subset i\mathfrak{a}_{\rm reg}^*$, each of the latter 
factors is of size $\lambda_\nu^{1/2}$,  yielding 
$\Psi(\sigma,W_\nu,\overline{W}_\nu)\asymp\lambda_\nu^{(\sigma-1)\dim U/2}$.

It will be convenient to introduce explicit coordinates in the integral defining $\Psi(s,W_\nu,\overline{W}_\nu)$ in order to extract the size of $W_\nu$. Writing
\[
\begin{pmatrix}
a & 0 \\
0 & 1
\end{pmatrix}={\rm diag}(y_1\cdots y_{n-1},y_2\cdots y_{n-1},\ldots ,y_{n-1},1)\in A,
\]
and using \S\ref{sec:Iwasawa}, we have
\[
\delta(a)=\prod_{i=1}^{n-2}y_i^{(n-1-i)i}, \quad \det(a)=\prod_{i=1}^{n-1}y_i^i,\quad 
da=\frac{dy_1}{y_1}\cdots \frac{dy_{n-1}}{y_{n-1}}.
\]
With these coordinates we may write $\Psi(s,W_\nu,\overline{W}_\nu)$ as
\[
\Gamma_\R(ns)\int_0^\infty\cdots  \int_0^\infty \left| W_\nu\left({\rm diag}(y_1\cdots y_{n-1},\ldots,
y_{n-1},1)\right)\right|^2\prod_{i=1}^{n-1}y_i^{i(s+i-n)}\frac{dy_i}{y_i}.
\]

We now decompose $\Psi(\sigma,W_\nu,\overline{W}_\nu)$ as $I_1+I_2$, where the integral $I_1$ is taken over the range $\max_i y_i \ll \lambda_\nu^{\frac12+\epsilon}$ and the integral $I_2$ over the complementary range. By Proposition \ref{Lapid}, after passing to a one-dimensional integral, we have
\[
I_2\ll \int_{r\gg \lambda_\nu^{\frac12+\epsilon}}r^{-N}\frac{dr}{r}\ll \lambda_\nu^{-N/2}
\]
for $N>1$ large enough. On the other hand, if $\sigma  >  n-1$, then we have $I_1\leq V\pnorm{W_\nu}^2_\infty$, where for a large enough constant $C>1$ we have put
\[
V=\prod_{i=1}^{n-1}\int_0^{C \lambda_\nu^{\frac12+\epsilon}}y_i^{i(\sigma+i-n)}\frac{dy_i}{y_i}.
\]
We deduce $\pnorm{W_\nu}^2_\infty\gg V^{-1}\lambda_\nu^{(\sigma-1)\dim U/2}$, and also
\[
V\asymp\lambda_\nu^{(\frac12+\epsilon)\cdot\sum_{i=1}^{n-1}i(\sigma+ i -n)}=\lambda_\nu^{(\frac12+\epsilon)\cdot \left(\sigma-1-\frac{n-2}{3}\right)\dim U}.
\]
Thus $\pnorm{W_\nu}_\infty^2\gg_\varepsilon \lambda_\nu^{\frac{(n-2)\dim U}{6}-\epsilon} = \lambda_\nu^{c(n)-\epsilon}$, which concludes the argument.\qed

\bigskip

The above argument can be refined to give lower bounds on $W_\nu$ even when $\nu$ is irregular. Indeed Proposition \ref{Lapid} (as well as Proposition \ref{rapid-fiber}) are valid for irregular $\nu$, as is the Stade formula. We have included the regularity assumption in Theorem \ref{general tau} to simplify notation and bring the idea of the proof to the forefront.

\begin{remark}\label{rem:KM}
We speculate on the geometric significance of the exceptionally large exponent $c(n)$ in Theorem \ref{th:gln}. For convenience, we restrict to the case $\Gamma=\PGL_n(\Z)$ in this paragraph.

Using standard notation for Siegel sets we consider the collar $\mathfrak{S}^{\asymp Y}:=\omega A^{\asymp Y}K$, where $\omega\subset U$ is a compact subset of $U$ and
\begin{equation*}
A^{\asymp Y}=\{a={\rm diag}(y_1\cdots y_{n-1},\ldots ,y_{n-1},1)\in A: y_i\asymp Y \; \forall \; i=1,\ldots ,n-1\},
\end{equation*}
for some parameter $Y\geq 1$. The right $G$-invariant measure, when expressed in the Iwasawa $UAK$ coordinates, is given by $dg=\delta(a)^{-1}dudadk$. Then the volume of this collar is
\begin{equation*}
\int_{\mathfrak{S}^{\asymp Y}}dg\asymp \int_{A^{\asymp Y}}\delta(a)^{-1}da\asymp Y^{-{\rm ht}(\PGL_n)}.
\end{equation*}
The relative volume of $\mathfrak{S}^{\asymp Y}$ is therefore seen to decrease as $n$ gets large, and this by a cubic power of $n$. In other words, the cuspidal regions of $\Gamma\backslash S_n$ become dramatically more ``pinched" as $n$ gets large. The narrower cusps of the higher rank spaces $\Gamma\backslash S_n$ create a bottleneck as the cusp forms transition from the oscillatory to the decay regime. With so little space to do so they get exceedingly large.

As mentioned in the introduction, Kleinbock and Margulis proved in \cite{KM99} that almost all geodesics penetrate the cusp at logarithmic speed $1/{\rm ht}(G)$. There, the collar plays the role of a moving target for the geodesic flow.
\end{remark}

\section{Proof of Theorem \ref{whitt supp}}\label{tau n}
The goal in this section is to study the critical points of the Whittaker phase function 
and to deduce Theorem~\ref{whitt supp} from the introduction.

\subsection{The Kostant-Toda lattice}\label{KT-lattice}
Since it figures prominently in the statement of Theorem \ref{whitt supp}, we now review 
some of the basic structures involved in the Kostant-Toda lattice, extensively studied in 
\cite{Kost79:Toda,Adler79}. This is the generalization to arbitrary split semisimple Lie 
groups of the classical Toda lattice, the latter being a totally integrable physical 
system of $n-1$ points of unit mass on the real line with nearest-neighbor exponentially 
attractive particle interaction. The classical Toda system can be represented by the 
Dynkin diagram of $\SL_n$.

We begin by writing, in the notation of \S\ref{sec:root-notation}, 
$\mathcal{T}:=\mathfrak{a}\oplus\bigoplus_{\alpha\in\Pi}(\g_\alpha+\g_{-\alpha})$. For 
example, if $G=\SL_n(\R)$ then $\mathcal{T}$ is the space of tridiagonal  matrices. 
Furthermore we write $\mathcal{J}:=\mathcal{T}\cap\p$. We have a projection map 
$\mathcal{J}\to\mathcal{Y}$ onto the subspace 
$\mathcal{Y}=\sum_{\alpha\in\Pi}(\g_\alpha+\g_{-\alpha})\cap\p$ of $\mathcal{J}$. In 
particular, for $G=\SL_n(\R)$, the map $\mathcal{J}\rightarrow\mathcal{Y}$ is just the 
extraction of the off-diagonal entries.

Let $\mathcal{T}_+\subset \mathcal{T}$ denote the open subset of those elements having 
positive $(\g_\alpha+\g_{-\alpha})$-coordinates. The elements of $\mathcal{T}_+$ are 
called {\it generalized Jacobi matrices (or elements).}  We let 
$\mathcal{J}_+:=\mathcal{T}_+\cap\p$ be the space of {\it generalized symmetric Jacobi 
matrices (or elements)} \cite[(5.4.2)]{Kost79:Toda}; it is a closed connected 
$2r$-dimensional submanifold of $\mathcal{T}_+$, where $r$ is the rank of $G$. We may 
write an arbitrary element in $\mathcal{J}_+$  as
\begin{equation}\label{symplectic}
\sum_{\alpha\in\Pi} p_\alpha H_\alpha+\sum_{\alpha\in\Pi} \alpha(a)(X_\alpha+X_{-\alpha}),\qquad p_\alpha\in\R,\; a\in A,
\end{equation}
referred to as \textit{Flaschka coordinates}. We have a map $\Upsilon: 
A\rightarrow \mathcal{J}_+$, given by sending $a$ to the element 
$\Upsilon (a)$ having Flaschka coordinates $p_\alpha=0$. The image of 
$\Upsilon$ is $\mathcal{Y}_+ = \mathcal{T}_+ \cap \mathcal{Y}=\mathcal{J}_+ \cap 
\mathcal{Y}$.

We have a decomposition $\g=\p\oplus\mathfrak{u}$. We let $X\mapsto X_s$ denote the 
corresponding projection onto $\p$. We can define a left-action of $B$ on 
$\mathfrak{p}$ 
by setting 
$(b,X)\mapsto (\Ad_{b} X)_s$. We have that $\mathcal{J}_+$ is the orbit of $B^\circ$ of 
$\Upsilon(e)$, where $B^\circ=UA$ is the identity component of $B$ (see~\cite[\S 
4.2.2]{Perelomov1990Book} for the case $G=\SL_n(\R)$, and the next 
\S\ref{dual-setting} for 
how to deduce it from a result of Kostant for general $G$).

We may endow $\mathcal{J}_+$ with the symplectic form 
$\sum_{\alpha\in\Pi}dp_\alpha\wedge d\alpha(a)/\alpha(a)$. The projection map 
$\mathcal{J}_+\rightarrow\mathcal{Y}_+$ is a Lagrangian fibration, with a 
canonical Lagrangian section $\mathcal{Y}_+ \subset \mathcal{J}_+$ given by inclusion.
As 
$H$ varies through the positive 
Weyl chamber $\mathfrak{a}_+$, the isospectral manifolds $\mathrm{Ad}_K(H)  \cap 
\mathcal{J}_+$ form a Lagrangian foliation of $\mathcal{J}_+$  \cite[\S 
4.1]{Kost79:Toda}. 

The Kostant-Toda lattice~\cite[(4.1.10)]{Perelomov1990Book}, is given by the 
Hamiltonian
\begin{equation}\label{Hamiltonian}
\frac12\sum_{\alpha\in\Pi}p_\alpha^2+\sum_{\alpha\in\Pi}\alpha(a)^2
\end{equation}
on the phase space $\mathcal{J}_+$. For example, for $G=\SL_n(\R)$, since 
$\alpha_i=E_{i,i}-E_{i+1,i+1}$, one has $\alpha_i(\exp(\frac12{\rm diag}(H_1,\ldots 
,H_n)))=e^{(H_i-H_{i+1})/2}$, recovering the nearest-neighbor exponential repulsion. 
It is an integrable system, and the flow preserves the above Lagrangian foliation. The 
Hamiltonian \eqref{Hamiltonian} is the restriction to $\mathcal{J}_+$ of the Killing form 
(appropriately normalized) on $\g$.

\subsection{The dual setting}\label{dual-setting}
From the Iwasawa decomposition 
$\g=\mathfrak{b}\oplus\mathfrak{k}$, we 
may deduce a corresponding decomposition
\begin{equation}\label{dual-decomp}
\g^*=\ker(\g^*\rightarrow\mathfrak{k}^*)\oplus \ker(\g^*\rightarrow\mathfrak{b}^*),
\end{equation}
into the spaces of linear functionals vanishing on 
$\mathfrak{k}$ and $\mathfrak{b}$, respectively.
The Cartan decomposition $\g=\p\oplus \mathfrak{k}$ induces an 
identification $\ker(\g^*\to\mathfrak{k}^*)\cong\p^*$ via $\xi\mapsto\xi|_\p$, which will be in place throughout this entire \S\ref{tau n}.
In particular we view $\p^*$ as a subspace of $\g^*$.

Through the use of a non-degenerate $\Ad$-invariant symmetric bilinear form 
$B(\cdot,\cdot)$ on $\g$, we 
may transport the structures of the previous subsection into similar ones for the dual 
space $\g^*$ using the induced isomorphism $\g^*\simeq\g$. We write $X_\xi\in\g$ for the image of $\xi\in\g^*$ under this identification. This yields an identification $\ker(\g^*\rightarrow\mathfrak{k}^*)\simeq\p$, since $\mathfrak{p}$ and $\mathfrak{k}$ are 
orthogonal, and 
$\ker(\g^*\rightarrow\mathfrak{b}^*)\simeq\mathfrak{u}$, which is a standard result in 
Lie theory. In this way, the decomposition 
\eqref{dual-decomp} corresponds to the previous decomposition $\g=\p\oplus\mathfrak{u}$.
Continuing, we see that:

\smallskip

\noindent $\bullet$ $\mathcal{J}\subset\p$ corresponds to the subspace 
$\mathcal{J}^*\subset\p^*$ of functionals vanishing on $[\mathfrak{u},\mathfrak{u}]\oplus 
\mathfrak{k}$;

\smallskip

\noindent $\bullet$ $\mathfrak{a}\subset \mathcal{J}$ corresponds to the subspace 
$\mathfrak{a}^*\subset \mathcal{J}^*$ of functionals vanishing on $\mathfrak{u}\oplus 
\mathfrak{k}$;

\smallskip

\noindent $\bullet$ $\mathcal{Y}\subset \mathcal{J}$ corresponds to the subspace 
$\mathfrak{u}_{\rm 
ab}^*\subset \mathcal{J}^*$ of functionals vanishing on 
$[\mathfrak{u},\mathfrak{u}]\oplus \mathfrak{a}\oplus 
\mathfrak{k}$;

\smallskip

\noindent $\bullet$  $\mathcal{J}^*=\mathfrak{a}^*\oplus 
\mathfrak{u}_{\rm ab}^*$, and the projection $\mathcal{J}\rightarrow\mathcal{Y}$ 
corresponds to  
$\mathcal{J}^*\rightarrow \mathfrak{u}_{\rm ab}^*$;

\smallskip

\noindent $\bullet$ $\mathcal{J}_+$ corresponds to 
$\mathcal{J}^*_+=\{v\in\mathcal{J}^*:v(X_\alpha)>0 \ \forall\, \alpha\in\Pi\}$, and 
$\mathcal{Y}_+$ to $\mathfrak{u}_{{\rm ab},+}^*=\mathfrak{u}_{\rm 
ab}^*\cap\mathcal{J}_+^*$;

\smallskip

\noindent $\bullet$ the Borel subgroup $B$ acts on 
$\mathfrak{p}^*$ by transport of the $B$-action on $\mathfrak{p}$, via the above 
isomorphism $\mathfrak{p}^*\simeq \mathfrak{p}$. More precisely, $(b,\xi)$ is sent to 
the linear form $Y\mapsto B(b.X_\xi,Y)$. This indeed lies in 
$\ker(\g^*\rightarrow\mathfrak{k}^*)$ from the orthogonality of $\p$ and $\mathfrak{k}$;

\smallskip

\noindent $\bullet$ the Lagrangian fibration $\mathcal{J}_+\rightarrow\mathcal{Y}_+$ 
corresponds to $\mathcal{J}^*_+\rightarrow \mathfrak{u}_{{\rm ab},+}^*$;

\smallskip

\noindent $\bullet$ the Lagrangian section $\mathcal{Y}_+ \subset \mathcal{J}_+$ 
corresponds to $\mathfrak{u}_{{\rm ab},+}^* \subset \mathcal{J}^*_+$;

\smallskip

\noindent $\bullet$ the isomorphism 
$\Upsilon:A \overset{\sim}{\longrightarrow} \mathcal{Y}_+$ corresponds to the isomorphism 
$A \overset{\sim}{\longrightarrow} \mathfrak{u}_{{\rm 
ab},+}^*$, 
given by $a\mapsto\ell_a=\Ad^*_a(\ell_1)$, 
where the standard functional $\ell_1\in\mathfrak{u}_{\rm ab}^*$ is introduced in 
\S\ref{Whitt-structures}.

\smallskip
 
From the $\Ad$-equivariance of the map $\xi\mapsto X_\xi$, the left-action of $B$ on $\mathfrak{p}^*$ can alternatively be described as
\begin{equation}\label{dual-B-action}
(b,\xi) \mapsto \Ad_{b}^*(\xi)|_\p.
\end{equation}
As above, we view the functional $\ell_1\in \mathfrak{u}_{\rm ab}^*$ as an 
element of 
$\mathcal{J}^*=\mathfrak{a}^*\oplus \mathfrak{u}^*_{\rm ab}\subset \mathfrak{p}^*\subset 
\mathfrak{g}^*$, trivial on $\mathfrak{a}$.
Then $\mathcal{J}^*_+$ is the $B^\circ$-orbit of $\ell_1$ as a consequence of Kostant's 
result to be recalled below. An arbitrary element in $\mathcal{J}^*_+$ may be written as $\Ad_{b}^*(\ell_1)|_\p$, 
where the element $b\in B^\circ$ is well-defined up to right-multiplication by $[U,U]$.

The Lagrangian fibration 
$\mathcal{J}^*_+\rightarrow\mathfrak{u}_{{\rm ab},+}^*$ is given by 
\begin{equation}\label{u-ab-proj}
\Ad_{b}^*(\ell_1)|_\p\longmapsto \Ad_{b}^*(\ell_1)|_\mathfrak{u} = \ell_{a},
\end{equation}
where $a=e^{H(b)}$ in the Iwasawa decomposition, that is $b\in aU=Ua$.

Next, we have a canonical isomorphism between $\p^*$ and $\mathfrak{b}^*$ given by 
$\xi 
\mapsto \xi|_{\mathfrak{b}}$, where we recall that we identify $\xi\in \mathfrak{p}^*$ 
with a functional on $\mathfrak{g}$ vanishing on $\mathfrak{k}$.
Equivalently, this isomorphism between $\p^*$ and $\mathfrak{b}^*$ is obtained by
duality from $c: \mathfrak{b}\subset \mathfrak{g} \twoheadrightarrow \mathfrak{p}$, where 
the second map is Cartan symmetrization map. Indeed, $\xi|_{\mathfrak{b}}=c^*(\xi)$ 
because for every 
$Y\in\mathfrak{b}$, we have
\[
\langle \xi|_{\mathfrak{b}},Y \rangle= \langle \xi,Y \rangle= \langle \xi, c(Y) \rangle= \langle 
c^*(\xi),Y \rangle,
\]
since $\xi$ is trivial on $\mathfrak{k}$. Similarly, the inverse isomorphism from 
$\mathfrak{b}^*$ to $\p^*$ 
is obtained by duality from $\mathfrak{p}\subset \mathfrak{g} \twoheadrightarrow 
\mathfrak{b}$, where the second map is Iwasawa projection.
Hence the isomorphism intertwines the co-adjoint $B$-action on $\mathfrak{b}^*$, and the 
$B$-action on $\p^*$ given by~\eqref{dual-B-action}.
Let $v_b\in \mathfrak{b}^*$ be the element that corresponds to $\Ad_{b}^*(\ell_1)|_\p\in 
\mathcal{J}_+^*\subset \mathfrak{p}^*$ under the isomorphism.
Specifically $v_b=\operatorname{Ad}^*_{b}(v_1)$, and $v_1$ is the standard functional on 
$\mathfrak{b}=\mathfrak{a}\oplus \mathfrak{u}$ that vanishes on $\mathfrak{a}$, and 
coincides with $\ell_1$ on $\mathfrak{u}$. The analogue of~\eqref{u-ab-proj} is simply
$v_b|_{\mathfrak{u}}=\ell_a$, where $a=e^{H(b)}$.

As was shown in \cite[\S2]{Adler79} for $G=\SL_n(\R)$ 
and \cite[\S6]{Kost79:Toda} in general, the image of $\mathcal{J}_+^*$ in 
$\mathfrak{b}^*$ can 
be realized as the 
co-adjoint orbit of $B^\circ$ acting on $v_1\in \mathfrak{b}^*$ (see also 
\cite[Thm.~5.1]{Bloch-Gay--Balmaz-Ratiu}). 
One may then use the Lie-Poisson structure on 
$\mathfrak{b}^*$ to define a $B^\circ$-invariant symplectic form on 
$\mathcal{J}^*_+$ (the basic formula is described in 
\S\ref{sub:coadjoint}). It is a key result of Kostant~\cite[Prop. 
6.4]{Kost79:Toda}, generalizing an earlier work 
of Flashka for $G$ of type $A$, that the resulting symplectic form on $\mathcal{J}_+^*$
corresponds to the one defined on $\mathcal{J}_+$ in 
\S\ref{KT-lattice}. The same result is also established in~\cite[Theorem 
5.4]{Bloch-Gay--Balmaz-Ratiu} (to see this, use Prop.~5.2 in \emph{loc. cit.} to show 
that the Flashka 
map denoted $F$ in \emph{loc. cit.} coincides with the isomorphism between
$\mathcal{J}^*_+$ and $\mathcal{J}_+$ composed with the coordinate 
map~\eqref{symplectic}).
In particular, the composition 
  $\mathcal{J}_+^*\subset \p^* \overset{\sim}{\rightarrow} \mathfrak{b}^*$ is a moment 
  map for the 
 action~\eqref{dual-B-action} of $B^\circ$. 

With those symplectic structures in place, we may form the {\it Toda isospectral manifold}
\[
\mathscr{L}_\nu:=\mathrm{Ad}_K^*({\rm Im}\,\nu) \cap \mathcal{J}^*\qquad (\nu\in i\mathfrak{a}^*),
\]
which appears in the statement of Theorem \ref{whitt supp}. When $\nu$ is regular,  
$\mathscr{L}_\nu^+:=\mathrm{Ad}_K^*({\rm Im}\,\nu) \cap \mathcal{J}^*_+$ is a Lagrangian 
leaf of $\mathcal{J}^*$ \cite[Theorem 6.7.1]{Kost79:Toda}. Composing the Lagrangian 
immersion $\mathscr{L}_\nu^+ \rightarrow\mathcal{J}^*_+$  with the Lagrangian fibration 
$\mathcal{J}^*_+\rightarrow\mathfrak{u}_{{\rm ab},+}^*$ yields the Lagrangian mapping
\begin{equation}\label{first-triple}
\mathscr{L}_\nu^+\longrightarrow\mathcal{J}^*_+\longrightarrow\mathfrak{u}_{{\rm ab},+}^*.
\end{equation}
The next paragraphs explore the relation of this mapping with similar structures coming 
from our stationary phase analysis of the Jacquet integral.

\subsection{An explicit description of $\Lambda_\nu$} 
We now return to the Lagrangian $\Lambda_\nu$, where $\nu\in i\mathfrak{a}^*$, which is
the image of the map 
$\Sigma_\nu\rightarrow T^*(S)$ of \eqref{immersion}. We would 
like to calculate defining equations for $\Lambda_\nu$. In this subsection, we do not 
assume 
that $\nu$ is regular. The main tool is the moment map for the Hamiltonian action of $G$ 
on the cotangent bundle $T^*(S)$, endowed with its canonical $G$-invariant symplectic 
form $\omega_S$. 

We begin by recalling the definition of the moment map. The symmetric space $S=G/K$ 
admits a left isometric action by $G$, which extents canonically to a Hamiltonian 
left-action of $G$ on $T^*(S)$. When $T^*(S)$ is trivialized as the fiber product 
$G\times_K\mathfrak{p}^*$, the $G$-action is given by $g'.[g,\xi]=[g'g,\xi]$. Recall that 
$\mathfrak{p}^*$ is the space of functionals on $\g$ which vanish 
on $\mathfrak{k}$, and thus the trivialization includes the natural identification $\xi 
\mapsto [1,\xi]$ of $\mathfrak{p}^*$ with $T^*_e(S)$. The moment map is
\[
m_G: T^*(S)\longrightarrow\mathfrak{g}^*,\qquad [g,\xi] \longmapsto \mathrm{Ad}^*_g(\xi).
\]
As should be the case for a moment map, 
note that $m_G$ is $G$-equivariant with respect to the translation action on 
$T^*(S)$ 
and the $\Ad^*$-action on $\mathfrak{g}^*$.

\begin{prop}\label{p:moment}
Let $\nu\in i\mathfrak{a}^*$ be arbitrary. Then $\Lambda_\nu$ consists of $[g,\xi]\in 
 T^*(S)$ such that
\begin{equation}\label{K-orbit}
\Ad_{\upkappa (g)}^*(\xi) \in \mathrm{Ad}_{S_\mathsf{w}^+}^*({\rm Im}\,\nu)
\end{equation}
and
\begin{equation}\label{moment}
\mathrm{Ad}^*_g(\xi)|_{\mathfrak{u}} =\ell_1.
\end{equation}
\end{prop}

\begin{proof} 
We use Lemma~\ref{lem:DKV} to evaluate the partial derivative of the Whittaker phase 
function \eqref{def:F}, in the second variable $x\in S$, with respect to $Y\in 
\mathfrak{p}$. We obtain
\begin{equation}\label{Y-derivative}
F_\nu(u,g;Y)=
B\left(H_\nu, \mathrm{Ad}_{\upkappa(\mathsf{w}ug)}(Y)\right)
=
\langle 
\mathrm{Ad}^*_{\upkappa(\mathsf{w}ug)^{-1}}({\rm Im}\,\nu), Y
\rangle.
\end{equation}
We deduce from the definition \eqref{immersion} of the map $\Sigma_\nu\rightarrow T^*(S)$ that
\[
\Lambda_\nu=\left\{[g, \mathrm{Ad}^*_{\upkappa(\mathsf{w}ug)^{-1}}({\rm Im}\,\nu)] :  (u,x)\in \Sigma_\nu,\; x=gK \right\}.
\]
Recall the notation $\uptau(g)=g\upkappa(g)^{-1}$ from \S\ref{sec:Iwasawa}. For $u\in U$ 
and $g\in G$, set $v=e^{-H(g)}u\uptau (g)\in U$. By the left-$A$-invariance of 
$\upkappa:G\rightarrow K$, we have $\upkappa(\mathsf{w}u\uptau 
(g))=\upkappa(\mathsf{w}v)$. Thus
\[
\mathrm{Ad}^*_{\upkappa (g)}\mathrm{Ad}^*_{\upkappa(\mathsf{w}ug)^{-1}}({\rm 
Im}\,\nu)=\mathrm{Ad}^*_{\upkappa(\mathsf{w}u\uptau (g))^{-1}}({\rm 
Im}\,\nu)\in\Ad^*_{S_\mathsf{w}^+}({\rm Im}\,\nu),
\] 
and condition \eqref{K-orbit} is thus satisfied. Then, we use again Lemma~\ref{lem:DKV} 
to evaluate the partial derivative of $F_\nu(u,g)$ with respect to $Z\in
\mathfrak{u}$. We write $\mathsf{w}ue^{tZ}g= \mathsf{w}ug e^{t \mathrm{Ad}_{g^{-1}}(Z)}$, and thus obtain
\begin{equation}\label{F-nu-U-derivative}
F_\nu(u;Z,g) =
\langle
\mathrm{Ad}^*_g \mathrm{Ad}^*_{\upkappa(\mathsf{w}ug)^{-1}}({\rm Im}\,\nu),
Z
\rangle
-\langle \ell_1, Z \rangle.
\end{equation}
We deduce that the fiber critical set $\Sigma_\nu$ consists of pairs $(u,x)$, with $x=gK$, such that for all $Z\in \mathfrak{u}$
\[
\langle\mathrm{Ad}^*_g \mathrm{Ad}^*_{\upkappa(\mathsf{w}ug)^{-1}}({\rm Im}\,\nu),Z\rangle=\langle \ell_1, Z \rangle,
\]
showing that the condition \eqref{moment} is also met. 

Conversely let $[g,\xi]\in G\times_K \mathfrak{p}^*$ satisfy \eqref{K-orbit} and 
\eqref{moment}. We need to show that there exists $u\in U$ such that $\xi = 
\mathrm{Ad}^*_{\upkappa(\mathsf{w}ug)^{-1}}({\rm Im}\,\nu)$; indeed, assuming this, then 
it follows from \eqref{moment} and \eqref{F-nu-U-derivative} that $(u,gK)\in\Sigma_\nu$. 
Using \eqref{K-orbit} there is $k\in M\cdot S^+_{\mathsf{w}}\subset K$ such that $\mathrm{Ad}^*_{\upkappa(g)}(\xi)=\Ad^*_{k^{-1}}({\rm Im}\,\nu)$; indeed recall that since
$\mathsf{w}^2=1$, the big Bruhat cell is invariant under $k\to k^{-1}$. By Lemma~\ref{lem:dkappa} there is $v\in U_{\mathsf{w}}$ such that
$k=\upkappa(\mathsf{w}v)$. Furthermore letting $u=e^{H(g)}v\uptau(g)^{-1}\in U$, we see that $k=\upkappa(\mathsf{w}v)=\upkappa(\mathsf{w}u\uptau(g))$ and
therefore $\mathrm{Ad}^*_{\upkappa(g)}(\xi) = \mathrm{Ad}^*_{\upkappa(\mathsf{w}u\uptau(g))^{-1}}(\mathrm{Im}\, \nu )$ as desired.
\end{proof} 

\begin{remark}
Equation~\eqref{moment} is essentially the definition of the Peterson variety, see~\cite{Kostant:Toda-flag}. 
\end{remark}

\subsection{Symplectic reduction}\label{sec:sympl-reduction}
We now restrict the translation $G$-action on $T^*(S)$ to the subgroup $U$ and 
symplectically reduce 
the tangent space $T^*(S)$ with respect to this restricted action. Let
\begin{equation}\label{defn-Umoment}
m_U: T^*(S) \longrightarrow\mathfrak{u}^*,\qquad [g,\xi]\longmapsto 
\Ad_g^*(\xi)|_{\mathfrak{u}}
\end{equation}
denote the corresponding moment map.

Any abelian functional on $\mathfrak{u}$ is fixed under the adjoint action of $U$ on 
$\mathfrak{u}^*$ and, if non-degenerate,
is a regular value under $m_U$. Thus the Hamiltonian action of $U$ on $T^*(S)$ preserves 
$m_U^{-1}(\ell_1)$, and on this fiber the action is free and proper. Let 
\[
T^*(S)_1= U \backslash\! \backslash_{\ell_1} T^*(S)
\]
be the symplectic reduction of the $U$-action on $T^*(S)$ over $\ell_1$. In other 
words, $T^*(S)_1$ is the orbit space of $U$ acting on $m_U^{-1}(\ell_1)$. By the Marsden-Weinstein theorem, $T^*(S)_1$ is endowed with a natural symplectic 
structure. Namely, there is a unique symplectic form $\omega_S^{\rm red}$ on $T^*(S)_1$ 
whose pullback under the quotient map to $m_U^{-1}(\ell_1)$ coincides with $\omega_S$.

To facilitate the study of $T^*(S)_1$ here and in the next subsection, we observe that 
$B^\circ$ acts simply transitively 
by left-multiplication on $S$. The map $(T^*(S),\omega_S)\rightarrow 
(T^*(B^\circ),\omega_{B^\circ})$ given in their respective left-trivialisations 
by
\begin{equation}\label{left-trivialisation}
G\times_K \mathfrak{p}^* \longrightarrow B^\circ\times\mathfrak{b}^*,\qquad 
[g,\xi]\longrightarrow 
(\uptau(g),\Ad^*_{\upkappa(g)}(\xi)|_{\mathfrak{b}})
\end{equation}
is a symplectomorphism. Here and in what follows we shall continue to identify 
$T^*(S)$ 
with $G\times_K \mathfrak{p}^*$ and $T^*(B^\circ)$ with $B^\circ\times\mathfrak{b}^*$ 
in writing down formulas.
\begin{prop}\label{p:TStoA}
The map $(T^*(S)_1,\omega_S^{\rm red})\to A$ sending the $U$-orbit of $[g,\xi]\in 
m_U^{-1}(\ell_1)$ to $e^{H(g)}$ is a Lagrangian fibration. It admits a (non-unique) lift 
to a symplectomorphism $(T^*(S)_1,\omega_S^{\rm red}) \overset{\sim}{\longrightarrow} 
(T^*A,\omega_A)$.
\end{prop}

\begin{proof}
The left-action of $U$ on $B^\circ$ 
given by left-multiplication lifts to an action of $U$ on 
$T^*(B^\circ)$ by the rule $u.(b,v)=(ub,v)$. The induced moment map
\[
T^*(B^\circ)
\rightarrow \mathfrak{u}^*,\qquad (b,v)\mapsto 
\Ad_b^*(v)|_{\mathfrak{u}}
\]
agrees, under the identification \eqref{left-trivialisation}, with the map $m_U$ from 
\eqref{defn-Umoment}, and we shall denote it by the same letter $m_U$. Letting
\[
T^*(B^\circ)_1:= U \backslash\!\backslash_{\ell_1} T^*(B^\circ) =  U\backslash 
m_U^{-1}(\ell_1),
\]
we must therefore prove the statement of the proposition for the map 
$T^*(B^\circ)_1\rightarrow A$ sending the $U$-orbit of $(b,v)\in m_U^{-1}(\ell_1)$ to 
$e^{H(b)}$. We shall 
do so by identifying the reduced space $T^*(B^\circ)_1$ as the cotangent bundle over $A$.

According to the theory of cotangent bundle reduction~\cite[\S6]{Ortega-Ratiu}, 
any choice of a left-$U$-invariant $1$-form $\alpha_1$ taking values in 
$m_U^{-1}(\ell_1)$ gives rise to a symplectomorphism
\[
(T^*(B^\circ)_1,\omega_{B^\circ}^{\rm red})\overset{\sim}{\longrightarrow} 
(T^*(U\backslash B^\circ),\omega_{U\backslash B^\circ}^{\rm mod}),
\]
extending the map $T^*(B^\circ)_1\rightarrow U\backslash B^\circ$ which sends $U.(b,v)$ 
to $U.b$. Here, $\omega_{U\backslash B^\circ}^{\rm mod}$ is the modified symplectic form 
given by the shift of $\omega_{U\backslash 
B^\circ}$ by a \textit{magnetic term} induced by the choice of $\alpha_1$. It is shown in 
\cite[Lemma 5.3]{Bloch-Gay--Balmaz-Ratiu} that one can chose $\alpha_1$ with vanishing 
magnetic term, so that $\omega_{U\backslash B^\circ}^{\rm mod}=\omega_{U\backslash 
B^\circ}$. (This identification is non-unique, as it depends on the choice of a $1$-form 
$\alpha_1$ whose associated magnetic 
term vanishes. However in our situation there is a standard 
choice~\cite[p.487]{Bloch-Gay--Balmaz-Ratiu}, see also the discussion at 
\S\ref{sub:Flashka}.)

Using the decomposition $B^\circ=UA$, we may identify $(T^*(U\backslash 
B^\circ),\omega_{U\backslash B^\circ})$ with $(T^*(A),\omega_A)$. The composite 
Lagrangian fibration 
\[
T^*(S)_1\simeq T^*(B^\circ)_1\overset{\sim}{\longrightarrow} T^*(U\backslash 
B^\circ)=T^*(A)\rightarrow A
\]
sends $[g,\xi]$ to $e^{H(g)}$.
\end{proof}

We now symplectically reduce $\Lambda_\nu$ with respect to the $U$-action. Note that  
\eqref{moment} states that $\Lambda_\nu$ lies in $m_U^{-1}(\ell_1)$. Moreover, both 
formulae in Proposition \ref{p:moment} are $U$-invariant, showing that $\Lambda_\nu$ is 
invariant under the $U$-action. Finally, we deduce from \eqref{Y-derivative} that the 
immersion $\Sigma_\nu\rightarrow T^*(S)$ of \eqref{immersion} is injective, hence a 
diffeomorphism onto its image $\Lambda_\nu$. Indeed, since $\nu$ is regular, 
$\Ad^*_{\upkappa}({\rm Im}\, \nu)$ determines $\upkappa=\upkappa(\mathsf{w}ug)^{-1}$ and 
then $u$ under the Bruhat decomposition.

Let $\Lambda_\nu^{\rm red}$ be the quotient $ U \backslash \Lambda_\nu$ inside $T^*(S)_1$.

\begin{prop}
If $\nu \in i\mathfrak{a}^*$ is regular, then $\Lambda_\nu^{\rm red}$ is Lagrangian 
inside $T^*(S)_1$ and
\begin{equation}\label{2nd-triple}
\Lambda_\nu^{\rm red}\longrightarrow T^*(S)_1\longrightarrow A
\end{equation}
defines a Lagrangian mapping.
\end{prop}

\begin{proof}
Recall that for $\nu$ regular $\Lambda_\nu$ is Lagrangian in $T^*(S)$. The reduction of 
an invariant Lagrangian is again Lagrangian, see 
e.g.~\cite[Thm.~3.2]{reduced-lagrangian}. It follows that $\Lambda_\nu^{\rm red}$ is 
Lagrangian inside $T^*(S)_1$.
\end{proof}

\subsection{Lagrangian equivalence}\label{sub:reduction} 
An equivalence of Lagrangian mappings
\[
L_i\longrightarrow M_i\longrightarrow N_i\qquad (i=1,2)
\]
is a fiber-preserving symplectomorphism from $M_1$ to $M_2$, sending $L_1$ to $L_2$.
We would now like to show that the above two triples \eqref{first-triple} and 
\eqref{2nd-triple} are equivalent. Theorem \ref{t:critical} below accomplishes this, 
providing a precise version of Theorem \ref{whitt supp}. 

Recall that $T^*(B^\circ)$ has, in fact, \textit{two} moment maps, coming from left and 
right multiplication of $B^\circ$ on itself. Right-multiplication acts as follows on 
$T^*(B^\circ)\simeq B^\circ\times\mathfrak{b}^*$:
\begin{equation}\label{LR-actions}
(b,v).b'=(bb',\Ad_{b'^{-1}}^*(v)),
\end{equation}
and has corresponding $\Ad_{b^{-1}}^*$-equivariant moment map
\[
m_B^r: B^\circ\times\mathfrak{b}^*\rightarrow\mathfrak{b}^*,\qquad (b,v)\mapsto v.
\]
Using the identification $\xi \mapsto \xi|_{\mathfrak{b}}$ of $\mathfrak{p}^*$ with 
$\mathfrak{b}^*$ as in \S\ref{dual-setting}, we 
see that $m_B^r$ composes with \eqref{left-trivialisation} to yield
\[
T^*(S) \longrightarrow\mathfrak{p}^*,\qquad [g,\xi]\longmapsto 
\Ad_{\upkappa(g)}^*(\xi).
\]
As the left- and right-actions of $B^\circ$ on itself commute, we shall see in the 
proof below that $T^*(S)_1$ admits a right-action by $B^\circ$, and that the associated 
reduced moment map is given by the same formula:
\begin{equation}\label{key}
T^*(S)_1\longrightarrow\mathfrak{p}^*,\qquad U.[g,\xi]\longmapsto 
\Ad_{\upkappa(g)}^*(\xi).
\end{equation}
This is well-defined, since $g\mapsto \upkappa(g)$ is left $U$-invariant. It is this 
last map which will allow us to compare the two triples \eqref{first-triple} and 
\eqref{2nd-triple}.

\begin{thm}\label{t:critical}
The map \eqref{key} defines a symplectomorphism
\begin{equation}\label{map}
T^*(S)_1 \longrightarrow \mathcal{J}_+^*,
\end{equation}
and induces a Lagrangian equivalence between \eqref{first-triple} and 
\eqref{2nd-triple}. 
In particular, the symplectomorphism~\eqref{map} induces an open embedding of 
$\Lambda_\nu^{\rm 
red}$ to $\mathscr{L}^+_\nu$ making the diagram
\begin{equation}\label{commutative} \begin{CD}
\Lambda_\nu^{\rm red}
@>>>
T^*(S)_1 @>>>  A  \\
@VVV @VVV @ VVV\\
\mathscr{L}_\nu^+ @>>>  \mathcal{J}_+^* @>>> \mathfrak{u}_{{\rm ab}, +}^*
\end{CD}
\end{equation}
commutative. The third vertical map is the isomorphism 
$A\rightarrow\mathfrak{u}_{{\rm ab}, 
+}^*$, given by $a\mapsto\ell_{a^{-1}}$.
\end{thm}

\begin{proof}
We begin by showing that \eqref{map} maps onto $\mathcal{J}^*_+$. The fiber $m_U^{-1} 
(\ell_1)$ is contained in the set of $[g,\xi]\in G\times_K \p^*$ 
satisfying~\eqref{moment}, 
that is $\mathrm{Ad}^*_g \xi|_{\mathfrak{u}} =  \ell_1$. For 
any 
such $[g,\xi]$, and any $X\in [\mathfrak{u},\mathfrak{u}]$ we have
\[
\langle \mathrm{Ad}^*_{\upkappa(g)} \xi, X \rangle =\langle \mathrm{Ad}^*_g \xi, 
\mathrm{Ad}_{\uptau(g)} X \rangle=0,
\]
where we wrote $g=\uptau(g)\upkappa(g)$, and we used the fact that 
$\mathrm{Ad}^*_{\tau(g)} X\in [\mathfrak{u},\mathfrak{u}]$, because 
$[\mathfrak{u},\mathfrak{u}]$ is stable under the adjoint action for the Borel subgroup 
$B$. Thus the restriction of the map \eqref{key} to $m_U^{-1}(\ell_1)$ takes values in 
$\mathcal{J}^*$, in fact, in $\mathcal{J}_+^*$. The map \eqref{map} is surjective, since 
the element 
of $U.[b^{-1},\operatorname{Ad}^*_{b}(\ell_1)|_\mathfrak{p}]\in m_U^{-1}(\ell_1)$ is sent 
 to
$\operatorname{Ad}^*_{b}(\ell_1)|_\mathfrak{p}\in\mathcal{J}^*_+$. 

Next we show that \eqref{map} defines a symplectomorphism. We again use the 
identification $T^*(S)\simeq T^*(B^\circ)$ from~\eqref{left-trivialisation}. Also recall 
the notation 
$T^*(B^\circ)_1=U\backslash\!\backslash_{\ell_1}T^*(B^\circ)=U\backslash 
m_U^{-1}(\ell_1)$ from the proof of Proposition~\ref{p:TStoA}. Under this identification, 
the map 
\eqref{map} 
becomes 
\begin{equation}\label{key2}
T^*(B^\circ)_1\rightarrow \mathcal{J}_+^*, \qquad U.(b^{-1},v_b)\mapsto 
\Ad^*_b(\ell_1)|_{\mathfrak{p}}.
\end{equation}
We now show that it is a symplectomorphism.

\begin{enumerate}
\item The moment map $m^l_B: T^*(B^\circ) \rightarrow \mathfrak{b}^*$ for 
the left $B^\circ$-action on $T^*(B^\circ)$ is given by $m^l_B(b,v)=\Ad_b^*(v)$. Note 
that $[U,U]$ is the isotropy subgroup $\{b\in B^\circ: \Ad_b^*(v_1)=v_1\}$ of 
$v_1$ under the co-adjoint action $B^\circ$. Let 
$B^\circ\backslash\!\backslash_{v_1}T^*(B^\circ)$ denote the reduction $[U,U]\backslash 
(m^l_B)^{-1}(v_1)$ of $T^*(B^\circ)$ over $v_1\in\mathfrak{b}^*$. We have
\[
(m^l_B)^{-1}(v_1)=\{(b^{-1},v_b): b\in B^\circ\}\subset 
\{(b^{-1},\Ad_{b}^*(v_1+v)):b\in B^\circ,\ v\in\mathfrak{a}^*\}=m_U^{-1}(\ell_1).
\]
The map
\[
m_U^{-1}(\ell_1)\rightarrow (m^l_B)^{-1}(v_1),\qquad 
(b^{-1},\Ad_{b}^*(v_1+v))\mapsto 
(b^{-1},v_b)
\] 
descends to the respective quotients; indeed if $u=u_1u'$, where $u_1\in [U,U]$, then
\[
(b^{-1}u^{-1},\Ad_{b}^*(v_1+v))=(b^{-1}u^{-1},\Ad_{ub}^*(v_1+v'))
\]
is sent to
\[
\begin{aligned}
(b^{-1}u^{-1},v_{ub})&=(b^{-1} u'^{-1} u_1^{-1},\Ad_{u_1u'b}^*(v_1)) \\
&=u_1^{-1}.(b^{-1}u'^{-1}, 
\Ad_{u'b}^*\Ad_{u_1}^*(v_1))=u_1^{-1}.(b^{-1}u'^{-1}, v_{u'b}).
\end{aligned}
\]
The resulting map
\[
T^*(B^\circ)_1
\longrightarrow B^\circ\backslash\!\backslash_{v_1}T^*(B^\circ)
\]
is a symplectomorphism.
\item Let $T^*(B^\circ)/\!/_{\!v_1}B^\circ$ be the reduction 
$(m_B^r)^{-1}(v_1)/[U,U]$ of $T^*(B^\circ)$ over $v_1$ under the 
\textit{right} action of $B^\circ$ on $T^*(B^\circ)$, as described in \eqref{LR-actions}. 
Note that the fiber of $m_B^r$ over $v_1$ is $B^\circ\times\{v_1\}$. One can pass 
from the left and right reductions via the symplectomorphism $T^*(B^\circ)\rightarrow 
T^*(B^\circ)$ sending $(b,v)\mapsto (b^{-1},\Ad_b^*(v))$. This map preserves the 
respective fibers over $v_1$ and descends to a symplectomorphism on the reduced spaces
\[
B^\circ\backslash\!\backslash_{v_1} T^*(B^\circ)\overset{\sim}{\longrightarrow} 
T^*(B^\circ)/\!/_{\!v_1}B^\circ,\qquad [U,U].(b^{-1},v_b)\mapsto (b^{-1},v_1).[U,U].
\]
This map is well-defined, since $u.(b,v_b)=(ub,v_b)\in [U,U].(b,v_b)$ is sent to
\[
(b^{-1}u^{-1},v_1)=(b^{-1},\Ad^*_{u^{-1}}(v_1)).u^{-1}=(b^{-1},v_1).u^{-1}\in 
(b^{-1},v_1).[U,U].
\]

\item Finally we observe that $\mathcal{J}^*_+=\{\Ad^*_b(\ell_1)|_{\mathfrak{p}}: b\in 
B^\circ\}$ is the symplectic reduction of the right $B^\circ$-action on $T^*(S)$ over 
$v_1$. Indeed, quotienting the fiber $B^\circ\times\{v_1\}$ by the centralizer subgroup 
yields a symplectomorphism
\[
T^*(B^\circ)/\!/_{\!v_1}\, B^\circ\rightarrow\mathcal{J}_+^*,\qquad 
(b^{-1},v_1).[U,U]\mapsto \Ad^*_b(\ell_1)|_{\mathfrak{p}}.
\]
\end{enumerate}
The composition of these maps yields \eqref{key2}.

Let us now show that \eqref{map} is fiber preserving, that is, that the right square 
of~\eqref{commutative} is commutative. On one hand, we see from Proposition 
\ref{p:TStoA} that the preimage of $a\in A$ under $T^*(B)_1 \rightarrow A$ is 
$\{U.(a,v_{a^{-1}}+v),\ v\in \mathfrak{a}^*\}$. On the other hand, it follows 
from 
\eqref{u-ab-proj} that the preimage of $\ell_{a}\in \mathfrak{u}_{{\rm ab}, 
+}^*$ under the map \eqref{map}
 is $\{U.[a^{-1}u^{-1},\operatorname{Ad}^*_{u}(\ell_a)|_{\mathfrak{p}}],\ u\in U\}$. 
We deduce that \eqref{map} respects fibers, and the induced map $A\to 
\mathfrak{u}_{{\rm ab}, +}^*$ is given by $a\mapsto \ell_{a^{-1}}$.  

Now let $[g,\xi]\in\Lambda_\nu$. Since $\xi$ lies in $\Ad_K^*({\rm Im}\,\nu)$ by 
\eqref{K-orbit} then so does $\Ad_{\upkappa(g)}^*\xi$, whence $\Lambda_\nu$ is mapped to 
$\mathscr{L}_\nu=\Ad^*_K({\rm Im}\,\nu)\cap\mathcal{J}^*$. The same is therefore true of 
$\Lambda_\nu^{\rm red}$. The assertion that the left square of~\eqref{commutative} is 
commutative is a 
reformulation of Proposition~\ref{p:moment}. Also as a 
consequence the map $\Lambda_\nu^{\rm red}\to \mathscr{L}^+_\nu$ is an open embedding. 
\end{proof}

\subsection{Remarks on the constructions in this section}\label{s:critical-remarks}
\hfill

\subsubsection{}
Olshanetsky--Perelomov~\cite[\S 9]{OP80} showed that $T(S)_1$ is 
symplectomorphic to the Toda lattice $\mathcal{J}_+$. They also proved that the 
Hamiltonian of the geodesic flow on $S$ reduces to the Toda Hamiltonian. The book 
\cite[\S 4.4]{Perelomov1990Book} contain further exposition of some of these results,
and~\cite[\S4.4.3]{Perelomov1990Book} is essentially step (1) of our proof above. 
Since the 
Lagrangian submanifolds $\Lambda_\nu$ and $\mathcal{L}_\nu^+$ are determined by the 
corresponding Hamiltonians, their result should imply that the two Lagrangian 
mappings \eqref{first-triple} and \eqref{2nd-triple} are equivalent.  We 
have opted to provide an independent proof of Theorem \ref{t:critical} for completness, 
and because our emphasize is less on the Hamiltonian and more on the Lie-theoretic 
constructions. 

\subsubsection{}\label{sub:Flashka} Our approach is largely inspired by modern geometric 
treatments such 
as~\cite{Bloch-Gay--Balmaz-Ratiu}.
Specifically, the symplectomorphism in step (3) of the proof above is a special case of 
the symplectomorphism $\mathbf{J}^{\nu_0}_R$ of~\cite[Thm.~4.5]{Bloch-Gay--Balmaz-Ratiu} 
for the choice on p.487 of \emph{loc. cit.} of the section $s_{\mu_0}$ and associated 
one-form $\alpha_{\nu_0}$ in the notation of \emph{loc. cit.} 
The Flashka map $F$ in \emph{loc. cit.} is the symplectomorphism $\mathcal{J}_+^* \to 
T^*A$. In summary, if we combine this with the construction of~\eqref{map} in the proof 
above, and 
Kostant's result recalled in \S\ref{dual-setting}, we have the following commutative 
diagram of 
symplectomorphisms:
\[
\begin{tikzcd}
T^*(S)_1 \ar[r] & T^*(B^\circ )_1 \ar[r,"\mathbf{J}_R^{\nu_0}"] 
\ar[d,"\text{Prop.}\ref{p:TStoA}",swap] & 
\mathcal{J}^*_{+} \ar[d,leftrightarrow,"\text{Kostant}"] 
\ar[dl,"F",swap] 
\ar[r,hookrightarrow] & \mathfrak{p}^*  \\
& T^*A \ar[r,"\eqref{symplectic}",swap] &  \mathcal{J}_+ &
\end{tikzcd}
\]

\subsubsection{} $\!\!\!\!\!\!$ For $G=\SL_2(\R)$, $\mathscr{L}_\nu = \Ad^*_K({\rm Im}\,\nu)$ is a circle. For general $G$ and regular $\nu$, $\mathscr{L}_\nu$ is a compact aspherical manifold of dimension the rank of $G$; see \cite{Davis:aspherical,Tomei:isospectral-tridiagonal}. For example for $G=\SL_3(\R)$ it is known that $\mathscr{L}_\nu$ is a genus $2$ surface. 

\subsubsection{}
$\!\!\!\!\!\!$ 
A similar symplectic reduction to that in \S\ref{sec:sympl-reduction} occurs in the 
quantum cohomology of flag manifolds~\cite{GLO:new-integral,Giventhal-Kim}.

\subsubsection{} $\!\!\!\!\!\!$ It is also possible to reduce the fiber critical set 
$\Sigma_\nu\subset U\times G$ under the $U$-action. Indeed, note that for any $v\in U$, 
we have $F_\nu(uv,v^{-1}g) = F_\nu(u,g) - \langle \ell_1, v \rangle$. Reducing under this 
action is equivalent to restricting the second parameter to $A$. We denote by 
$\Sigma_\nu^{\rm red} \subset U \times A$ the set of pairs $(u,a)$ which are critical for 
$u\mapsto F_\nu(u,a)$.

There is a natural section $\Lambda_\nu^{\rm red} \to \Lambda_\nu$ obtained by taking the Iwasawa $A$-part of $g$. Composing with the reduction map $\Lambda_\nu \to \Lambda_\nu^{\rm red}$ and then the 
open embedding to $\mathscr{L}_\nu^{+}$, we obtain a map $\Sigma_\nu^{\rm red} \to 
\mathscr{L}_\nu^{+}$ given by $(u,a) \mapsto \mathrm{Ad}^*_{\upkappa(\mathsf{w}ua)^{-1}} 
({\rm Im}\,\nu)$. The diagram~\eqref{commutative} being a Cartesian square implies that 
for any $a\in A$ the set $(u,a)$ of critical points in $\Sigma_\nu^{\rm red}$ lying over 
$a$ is sent bijectively to the $\nu$-isospectral fiber over $\ell_{a^{-1}}$.

\subsubsection{} $\!\!\!\!\!\!$ The method of co-adjoint orbits described in 
\S\ref{sub:coadjoint} yields a natural interpretation of the construction of 
$\mathscr{L}_\nu$ and the Lagrangian mapping. Indeed, the intersection of the coadjoint 
orbit $\mathrm{Ad}^*_G({\rm Im}\,\nu)$ with 
$\ker(\mathfrak{g}^* \to \mathfrak{k}^*)$ captures the spherical vector of
the representation $\pi_\nu$. This intersection is simply $\mathrm{Ad}^*_K({\rm Im}\,\nu)$, 
which itself is the zero level set of the moment map of the $K$-action on 
$\mathrm{Ad}^*_G({\rm Im}\,\nu)$. To investigate the analytic behavior of the 
associated spherical Whittaker function, relative to non-degenerate additive characters 
of $U$, we consider the intersection $\mathrm{Ad}^*_K({\rm Im}\,\nu)\cap 
\ker(\mathfrak{g}^* \to \mathfrak{u}^*_{\rm der})$, where $\mathfrak{u}_{\rm 
der}=[\mathfrak{u},\mathfrak{u}]$. 
This intersection turns out to coincide with $\mathscr{L}_\nu$.
The resulting Lagrangian fibration 
\[
\mathscr{L}_\nu\rightarrow \ker(\mathfrak{g}^*\to \mathfrak{u}^*_{\rm der})
\cap 
\ker(\mathfrak{g}^*\to \mathfrak{p}^*)
\to  \mathfrak{u}^*_{\mathrm{ab}}
\] 
contains the fibration~\eqref{first-triple} as an open embedding.
It is also natural from this point of view that the variation of its fibers relates to 
the asymptotic behavior 
of $W_\nu$.

\subsubsection{} $\!\!\!\!\!\!$ Note that $\Ad^*_K({\rm Im}\,\nu)$ is Lagrangian inside 
$\Ad^*_G({\rm Im}\,\nu)$, see~\cite{Azad-vandenBan-Biswas}, and that the projection of 
$\mathrm{Ad}^*_K({\rm Im}\,\nu)$ onto $\mathfrak{a}^*$ is, by the Kostant convexity 
theorem, the convex hull of the Weyl group orbit of ${\rm Im}\,\nu$.

\section{Lagrangian singularities}\label{s:singularities}
In \S\ref{tau n} we defined, under a regularity assumption on $\nu$, three Lagrangian 
mappings
\begin{equation}\label{mH-def}
\Lambda_\nu\rightarrow S,\qquad\quad \Lambda_\nu^{\rm red}\rightarrow A,\qquad\text{and}\qquad \mathscr{L}^+_\nu\rightarrow\mathfrak{u}_{\rm ab}^*.
\end{equation}
We described their precise relationship in Theorem \ref{t:critical} and the discussion 
preceding it. In this section, as a way of preparing the proofs of Theorem \ref{n=3 tau} 
and Theorem~\ref{cor:Pearcey}, we discuss Lagrangian mappings in a more general context. 
In particular, we shall recall some facts related to numerical invariants and associated 
asymptotics of singularities, quoting from the extensive literature on this subject .

\subsection{Stratification by singularity type}\label{ad-summary}

Let $E\rightarrow B$ be a Lagrangian fibration and $L\rightarrow E$ a Lagrangian immersion. Then the composition $\pi : L\rightarrow B$ is a Lagrangian mapping to the base space $B$. We write
\begin{equation}\label{trichotomy}
B =\mathsf{C}\sqcup\mathsf{L}\sqcup\mathsf{S}
\end{equation}
where, by definition, the {\it caustic locus} $\mathsf{C}$ is the set of critical values of $\pi$, the {\it light zone} $\mathsf{L}$ is the complement of $\mathsf{C}$ in $\pi(L)$, and the {\it shadow zone} $\mathsf{S}$ is the complement of $\pi(L)$ in $B$.

One could refine $\mathsf{C}$ according to singularity types, which would lead to a stratification of the Lagrangian $L$ via the fibers of $\pi$. See \cite[\S2]{AV} for the general theory of stratifications via coranks of the first differential of a smooth mapping restricted to singular loci, and [{\it loc. cit.}, \S 21] for that same theory applied to the special case of Lagrangian mappings. We have carried this out for the examples in \eqref{mH-def} when $n=3$ (and under a self-dual assumption), where it already exhibits a rich structure.

\subsection{Associated asymptotics}\label{associated-asymp}
We highlight two ways in which the above decomposition of the base $B$ yields information about the corresponding oscillatory integrals. We specialize to the case of the mappings in \eqref{mH-def} associated with Whittaker functions and refer the reader to \cite{Duistermaat:oscillatory, Hormander:FIO-I} for the more general passage from Lagrangian mappings to Fourier integral operators.

\medskip

-- {\it Shadow zone and rapid decay}: The image of either of the first two Lagrangian mappings in \eqref{mH-def} should be thought of as the ``essential
support" of the Whittaker function. For example, we showed in Proposition \ref{rapid-fiber} that the Whittaker function decays rapidly in the shadow
zone. By comparison, viewing the Whittaker function as an eigenfunction of the quantum Toda lattice, $\mathscr{L}_\nu$ is the characteristic variety of the corresponding system of linear partial differential equations (see e.g.~\cite{To:asymptotic-Whittaker}). The image of $\mathscr{L}_\nu\rightarrow\mathfrak{u}^*_{\rm ab}$ corresponds to the classically allowed region. 

Now $\mathscr{L}_\nu$ is closed in $\mathrm{Ad}^*_K({\rm Im}\,\nu)$ and so is compact. It follows that the classically allowed region is also compact. In fact, since the projection $\mathfrak{g}^*\to \mathfrak{u}^*_{\mathrm{ab}}$ is orthogonal for the invariant scalar product (see \S\ref{sec:root-notation}), we see that the image of $\mathscr{L}_\nu\rightarrow\mathfrak{u}^*_{\rm ab}$ is included in the ball of radius $\|\nu\|$. In light of Theorem~\ref{t:critical}, the same inclusion holds for the image of $\Lambda_\nu^{\rm red}\rightarrow A$. This proves inequality~\eqref{light-zone-ineq}.

\medskip

-- {\it Singularities and degenerate critical points:} Let $Q_\nu(u,x)=\nabla^2_u F_\nu(u,x)$ denote the fiber Hessian of $F_\nu$ at $(u,x)\in\Sigma_\nu$ and $d_\nu([g,\xi])$ denote the differential of the mapping $\Lambda_\nu\rightarrow S$ at $[g,\xi]$. In view of the non-degeneracy assumption on $\nu$, the map $\Sigma_\nu\rightarrow\Lambda_\nu$ of \eqref{immersion}, sending $(u,x)$ to $[g,\xi]$, induces an isomorphism
\begin{equation}\label{ker-Hess}
\ker Q_\nu(u,x)\;\overset{\simeq}{\longrightarrow}\;  \ker d_\nu([g,\xi]).
\end{equation}
(See e.g. \cite[\S 19.3]{AV} or \cite[Theorem 3.14]{Hormander:FIO-I}.) In particular, $(u,x)$ is a degenerate critical point for $u\mapsto F_\nu(u,x)$ if and only if $[g,\xi]$ is singular for the mapping $\Lambda_\nu\rightarrow S$.
In other words, $(u,x)$ is non-degenerate if and only if the tangent space of $\Lambda_\nu$ at $[g,\xi]$ is transversal to the fiber of the projection
$[g,\xi]\mapsto x$, where $x=gK$. This correspondence remains true for the reduced mapping $\Lambda_\nu^{\rm red}\rightarrow A$, because the $U$-orbits are
transverse to the fibers.

\subsection{Numerical invariants of Lagrangian singularities}\label{num-inv}
Let $\pi: L\rightarrow B$ be a Lagrangian mapping. In this subsection we discuss several of the numerical invariants one may associate with Lagrangian singularities, which are the map germs of such singular points, viewed up to Lagrangian equivalence. For more information on the theory of singularities, the reader is referred to the classic book by Arnol'd, Gusein-Zade, and Varchenko \cite{AV}.

After the {\it corank} -- the codimension of the image of $d\pi_p$ -- the first numerical 
invariant one typically encounters is the {\it multiplicity} (or Milnor number) of a 
singularity $\mu$. Roughly speaking, $\mu$ is the (maximum) number of non-degenerate 
critical points into which a singularity splits under a small perturbation. Indeed, a 
function having a critical point of finite multiplicity $\mu$ is equivalent, in a 
neighborhood of the point, to a polynomial of degree $\mu+1$ (cf. \cite[\S 6.3]{AV}). For 
the precise definition of $\mu$, the reader can consult \cite[Definition 
2.1]{Greuel-Lossen}.

The {\it modality} of a singularity is a non-negative integer which counts the number of 
continuous parameters (or moduli) that enter into the definition of the associated normal 
form. We refer to \cite[\S 2.4]{Greuel-Lossen} or \cite[p.184]{AV} for the exact 
definition. A singularity of modality $0$ is called {\it simple}. Simple singularities, 
having no moduli, appear discretely. 

Arnol'd has classified stable simple singularities. (Stable singularities are those which persist under small perturbations; they are the only ones visible ``with the naked eye.") Below is a list of the simple stable singularities, along with function germs representing each class:
\begin{itemize}
\item[$(A)$] $A_k$: $\pm x_1^{k+1}+x_2^2+\cdots +x_n^2$ \quad ($k\geq 2$);
\item[$(D)$] $D_k$: $x_2(x_1^2\pm x_2^{k-2})+x_3^2+\cdots +x_n^2$\quad ($k\geq 4$);
\item[$(E)$] $E_6$: $x_1^3\pm x_2^4+x_3^2+\cdots +x_n^2$;
\item[] $E_7$: $x_1(x_1^2+x_2^3)+x_3^2+\cdots +x_n^2$;
\item[] $E_8$: $x_1^3+x_2^5+x_3^2+\cdots +x_n^2$.
\end{itemize}
 
Singularities of type $A$ are of corank $1$ and those of type $D$ and $E$ are of corank 
$2$. Thus any simple singularity is of corank at most $2$ (see \cite[Lemma 4.2]{A2}). 
Moreover, any corank $1$ singularity of finite multiplicity is necessarily simple. Thus 
the type $A$ singularities (also called Morin singularities) can be characterized as 
those having corank $1$ and finite multiplicity; these facts are summarized in \cite[\S 
11.1]{AV} or \cite[Theorem 2.48]{Greuel-Lossen}. The multiplicity of an ADE singularity 
is indicated in its subscript. 

We shall be primarily interested in $A_2$ and $A_3$ singularities. An $A_2$-type singularity is sometimes referred to as a {\it fold singularity}, and an $A_3$-type singularity as a {\it cusp singularity}. As an example of a fold singularity, consider the projection of the sphere to the horizontal plane touching the south pole. The singular points are the points of the equator; they are all fold singularities. They arise from a coalescence of two critical points. One can realize a cusp singularity from the projection of the surface $z=x^3+xy$ to the $(y,z)$-plane; the warp on one half of the surface is known as a Whitney pleat. For visualizations of both of these fundamental examples, see Figures 7 and 8 in Section 1 of \cite{AV}. A theorem of Whitney (see \cite[\S 1.5]{AV}) states that the stable singularities of a differentiable map between surfaces are either non-degenerate, or of type $A_2$ or $A_3$. 

Finally, there is yet another numerical invariant of a critical point, called the {\it 
singularity index} and denoted $\beta$. The singularity index is defined by the 
asymptotic behavior of associated oscillatory integrals~\cite[Definition 4.2.1]{A1}. The 
index of singularity is $\beta$ if the integral is of size $t^{-\frac{n}{2}+\beta}$ for 
generic choice of amplitude function. Arnol'd~\cite{A1} has calculated the singularity 
index for all simple singularities and many others; they turn out to be rational numbers. 
For the simple singularities one has $\beta=1/2-1/N$, where $N$ is the corresponding 
Coxeter number $N(A_k)=k+1,\; N(D_k)=2k+2, \; N(E_6)=12,\; N(E_7)=18,\; N(E_8)=30$.

\section{Proof of Theorem~\ref{general tau}}\label{non-deg-phase}

Let $\nu\in i\mathfrak{a}_{\mathrm{reg}}^*$ be an arbitrary regular spectral parameter 
and recall from \S\ref{s:statement} the value of the constant 
$c(G):=\tfrac{\mathrm{ht}(G)-\dim(U)}{2}$. 
We  shall establish in this section that
\begin{equation}\label{proof-whittaker-bound}
\|W_{t\nu}\|_\infty \gg t^{c(G)}, \quad \text{as $t\to \infty$.}
\end{equation}
The idea is to test the Whittaker function against a symbol localized in phase-space 
inside a single sheet of $\Lambda_\nu \cap T^*V$, where $V\subset A$ is an open set over 
which the cover $\Lambda_\nu\rightarrow A$ is unramified. We then apply a stationary 
phase argument for a Morse-Bott function.
Our argument is inspired from~\cite[\S1.2]{Duistermaat:oscillatory}, except that there the symbol was chosen instead transversal to $\Lambda_\nu\cap T^* V$, and the singularity was then Morse.

\subsection{Non-degenerate critical points}
In this preliminary subsection, we show the existence of an appropriate open set 
$V\subset A$ and testing function $\phi$. We keep the same notation as in \S\ref{tau n}.  
\begin{lem}\label{l:origin}
The origin $0\in \mathfrak{u}_{\mathrm{ab}}^*$ is not a critical value of $\mathscr{L}_\nu \to \mathfrak{u}_{\mathrm{ab}}^* $.
\end{lem}
\begin{proof} 
Inside $\mathscr{L}_\nu$ the fiber above $0 \in  \mathfrak{u}_{\mathrm{ab}}^* $ consists of $\{\mathrm{Im}\, \nu^w,\ w\in W\}$. In a neighborhood of any of these points we have that $\mathscr{L}_\nu\to \mathfrak{u}_{\mathrm{ab}}^*$ is a local diffeomorphism. Indeed we compute that the tangent space of $\mathscr{L}_\nu$ at $\mathrm{Im}\, \nu^w$  is $[\mathfrak{k},\mathrm{Im}\, \nu^w] \cap \mathcal{J}^*$ which surjects onto $ \mathfrak{u}_{\mathrm{ab}}^* $ because $\nu$ is regular, and $[H_{\nu^w},X_\alpha-X_{-\alpha}]=\alpha(H_{\nu^w})(X_\alpha+X_{-\alpha})$ for every $\alpha\in \Pi$.
\end{proof}

Another way to approach Lemma \ref{l:origin} is to remark that we are computing the critical points of the Iwasawa projection $u\mapsto \langle \nu,H(\mathsf{w}u) \rangle$. These are known~\cite{DKV} to be non-degenerate if $\nu$ is regular; see also the discussion following Lemma~\ref{lem:DKV}.

\begin{cor}
	 The light zone $\mathsf{L}\subset A$ contains a translate of the negative Weyl chamber
	\[
	\exp(- \mathfrak{a}_+) = 
	\{a\in A,\ \alpha(a)<1\ \forall \alpha\in \Pi\}.
	\]
\end{cor}
\begin{proof}
	Under the map $A\to \mathfrak{u}_{\mathrm{ab}}^*$, $a\mapsto \ell_a$, the preimage of a neighborhood of $0\in \mathfrak{u}_{\mathrm{ab}}^*$ contains a translate of the negative Weyl chamber.
	Hence the assertion follows from Lemma~\ref{l:origin} and Theorem~\ref{t:critical}. 
\end{proof}

This corollary implies that for certain $g\in S$, the phase function $u\mapsto F_{\nu}(u,g)$ is Morse and has $|W|$ distinct critical points. In particular we can choose a bounded connected open set $V\subset A$ inside the translate of the negative Weyl chamber, which is small enough so that the covering $\Lambda_\nu\to A$ becomes unramified over $V$.

Since $\Lambda_\nu$ is transverse to the vertical fibers of $T^*V\to V$, we may choose a 
real-valued function $\phi\in C^\infty(V)$, depending only on $\nu$, such that the graph 
$\{(a,d\phi(a))\in T^*(V); a\in V\}$ of $d\phi$ is entirely inside $\Lambda_\nu$. This 
graph then picks out a single sheet in $\Lambda_\nu\cap T^*(V)$. Let $\beta\in 
C^\infty_c(A)$ have support contained in $V$.

\subsection{Stationary phase approximation for Morse--Bott functions} 
The proof of Theorem~\ref{general tau} involves oscillatory integrals of the form
\begin{equation}\label{osc-MB}
\int_{\R^d} \alpha(x) e^{it G(x)} dx,
\end{equation}
where $\alpha,G\in \mathcal{C}^\infty(\R^d)$ with $\alpha$ of compact support. The following is a known generalization of the stationary phase approximation to the case of Morse--Bott functions, see e.g.~\cite{Chazarain}.
\begin{prop}\label{p:chaz}
Suppose that $G$ is Morse--Bott and that the set of critical points of $G$ contained in the support of $\alpha$ form a connected submanifold $W\subset \R^d$.
Then the oscillatory integral~\eqref{osc-MB} is asymptotic as $t\to \infty$ to
\[
\left(
\frac{2\pi}{t}
\right)^{\frac{d-e}{2}}
e^{itG(W)-\frac{i\pi}{4}\sigma}
\int_W \alpha(x) \left|
\mathrm{det}_{W}\ G''(x)
\right |^{-\frac12} dx
\]
where $e=\dim W$, $G(W)$ is the value of  $G(x)$ at any point  $x\in W$, and  $\sigma$ (resp. $\mathrm{det}_W\ G''$) is the signature (resp. determinant) of the Hessian of $G$ in the directions transverse to $W$.
\end{prop}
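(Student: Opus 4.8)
The plan is to prove the Morse--Bott stationary phase formula (Proposition~\ref{p:chaz}) by a localization argument followed by a slicing into a normal and tangential direction, reducing to the classical non-degenerate stationary phase estimate fiberwise. First I would use a smooth partition of unity subordinate to a cover of the compact critical manifold $W$ by finitely many coordinate charts. Away from $W$ the phase $G$ has no critical points on the compact support of $\alpha$, so repeated integration by parts shows that portion of the integral is $O(t^{-\infty})$ and contributes nothing to the stated asymptotic. It therefore suffices to establish the formula when $\alpha$ is supported in a small tubular neighborhood $\mathcal{N}$ of a single chart of $W$.

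On such a neighborhood the Morse--Bott hypothesis lets me choose adapted coordinates $(y,z)\in \R^{e}\times\R^{d-e}$, with $y$ parametrizing $W$ (so $W=\{z=0\}$ locally) and $z$ the transverse directions, in which $G(y,z)=G(W)+\tfrac12\langle A(y)z,z\rangle+O(|z|^3)$, where $A(y)$ is the transverse Hessian, a smooth family of symmetric invertible matrices of fixed signature $\sigma$. A further $y$-dependent linear (or Morse-lemma) change of variable in $z$ — smoothly in $y$, by the parametrized Morse lemma — brings the phase to the exact form $G(W)+\tfrac12\langle A(y)z,z\rangle$; the Jacobian of this change of variable is $|\det A(y)|^{-1/2}$ up to the normalization absorbed in the measure. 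Then for each fixed $y$ I apply the standard non-degenerate stationary phase asymptotic in the $z$-variable: the $z$-integral is asymptotic to $(2\pi/t)^{(d-e)/2} e^{-i\pi\sigma/4} |\det A(y)|^{-1/2}\,\tilde\alpha(y,0)$ where $\tilde\alpha$ is the amplitude in the new coordinates, with an error uniform in $y$ over the compact chart (uniformity of the remainder in the classical estimate is the standard one, obtained by controlling finitely many derivatives of the amplitude and the inverse of the Hessian). Integrating the resulting expression against $dy$ over $W$, summing over the partition of unity, and recognizing $\int_W \alpha(x)|\det_W G''(x)|^{-1/2}dx$ as the invariant form of $\int \tilde\alpha(y,0)|\det A(y)|^{-1/2}dy$ gives the claimed formula, with the phase factor $e^{itG(W)}$ pulled out since $G(W)$ is constant on the connected manifold $W$.

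The main obstacle is packaging the uniformity cleanly: one needs the parametrized Morse lemma (so that the coordinate change straightening the transverse quadratic part depends smoothly on the tangential variable $y$ and is defined uniformly on a compact piece of $W$), together with a version of the classical stationary phase asymptotic with remainder estimates that are uniform over a compact family of phases and amplitudes. Both are standard — see~\cite{Duistermaat:oscillatory} or~\cite{Hormander:FIO-I} for the uniform stationary phase expansion, and~\cite{Chazarain} for the Morse--Bott version — so the argument is a matter of assembling known ingredients rather than proving anything genuinely new; the only care needed is to check that the signature $\sigma$ and the value $G(W)$ are indeed constant on $W$ (immediate from connectedness and the Morse--Bott condition, which forces the transverse Hessian to have locally constant rank and signature) so that they may be extracted from under the integral.
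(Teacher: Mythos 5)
The paper gives no proof of this proposition; it simply cites Chazarain's treatment of Morse--Bott stationary phase. Your argument reconstructs what that citation refers to: localize to a tubular neighborhood, integrate by parts away from the critical manifold, slice the neighborhood into normal and tangential directions via the parametrized Morse lemma, apply the ordinary nondegenerate stationary phase estimate fiberwise in the transverse variable with remainders uniform over the compact tangential parameter, then integrate over $W$. That is the correct and standard route, and your observations that connectedness of $W$ forces $G(W)$ and the transverse signature $\sigma$ to be constant (so they may be pulled out of the integral) are exactly the necessary bookkeeping points.

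One minor slip to clean up: you attribute the factor $|\det A(y)|^{-1/2}$ to the Jacobian of the Morse-lemma change of variable, but that change has Jacobian $1$ at $z=0$ (it only removes the $O(|z|^3)$ remainder). The factor $|\det A(y)|^{-1/2}$ comes instead directly from the Gaussian (Fresnel) integral $\int e^{\frac{i t}{2}\langle A(y)z,z\rangle}\,dz = (2\pi/t)^{(d-e)/2}e^{-i\pi\sigma/4}|\det A(y)|^{-1/2}$. As your next sentence states the fiberwise asymptotic correctly, the formula you arrive at is right; the misattribution is cosmetic rather than a gap. You should also say a word about why the local coordinates can be chosen so that the tangential Lebesgue measure $dy$ agrees with the surface measure $dx$ on $W$ appearing in the statement (take $(y,z)$ to be Euclidean coordinates adapted to the tubular neighborhood, with $y$ the orthogonal projection to $W$ and $z$ normal coordinates), but again this is a standard point.
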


\subsection{Proof of~\eqref{proof-whittaker-bound}, conditional on Hypothesis~\ref{hypothesis}}\label{s:proof-general-tau}  We now proceed to prove Theorem~\ref{general tau}, using Hypothesis~\ref{hypothesis}. Our approach is inspired by that of~\cite[(1.2.6)]{Duistermaat:oscillatory}, where the other extreme case of the graph of $d\phi$ being transverse to $\Lambda_\nu \cap T^* V$ is treated.

With $\phi$ and $\beta$ as 
above, we now form the integral
\begin{equation}\label{1sheet}
\int_A W_{t\nu}(ta)\delta(ta)^{-1/2}\nu(t\mathsf{w}a\mathsf{w})^{-1}e^{-it\phi(a)} \beta(a)da.
\end{equation}
Using the integral representation of $W_\nu$ in~\eqref{W-form2}, and 
Hypothesis~\ref{hypothesis}, this is equal to
\[
\int_A \int_U  \delta(\mathsf{w}u)^{1/ 2}  e^{it(\sigma(u,a) -\phi(a) )}  \beta(a) 
\alpha(u) du da + O(t^{-N}),
\] 
where we have set $\sigma(u,a):=B(H_\nu,H(\mathsf{w}u)) - \langle \ell_1, aua^{-1} \rangle$. 

We wish to show that the new phase function $(u,a)\mapsto \sigma(u,a)- \phi(a)$ is Morse-Bott. Let
\[
S_{\nu,\phi}=\{(u,a)\in U\times V: d_u\sigma(u,a)=0,\; d_a\sigma(u,a)=d\phi(a)\}
\]
be its critical set, a connected submanifold of $U\times V$. 
The equation $d_u\sigma(u,a)=0$ is that of $\Sigma_\nu$, see~\eqref{def:F}.
Note that $\phi$ and $V$ were chosen so that $d_a\sigma(u,a)=d\phi(a)$ holds throughout a 
certain sheet of $\Sigma_\nu$; the solution locus for $S_{\nu,\phi}$ therefore reduces 
locally to the single 
equation $d_u\sigma(u,a)=0$, and in particular, $\dim S_{\nu,\phi}=\dim A$. From this it 
follows that the tangent space $T_{(u,a)} 
S_{\nu,\phi}$ is the kernel of the differential of $d_u\sigma$. Explicitly, $T_{(u,a)} S_{\nu,\phi}$ is equal to 
\begin{equation}\label{twice}
\{(X,Y)\in T_uU\times T_aV: d^2_u\sigma(u,a)\cdot X + d_a d_u \sigma(u,a)\cdot Y=0\}.
\end{equation}
On the other hand, the Hessian of $\sigma(u,a)-\phi(a)$ is 
\[
\begin{pmatrix} 
d^2_u \sigma(u,a) & d_a d_u \sigma(u,a) \\
d_a d_u \sigma(u,a) & d^2_a \sigma(u,a) - d^2 \phi(a)
\end{pmatrix}.
\]
Since $d_a \sigma (u,a) = d \phi(a)$ for every $(u,a)\in S_{\nu,\phi}$, the bottom row vanishes  along $T_{u,a}S_{\nu,\phi}$. Thus the kernel of the Hessian is again given by \eqref{twice}. We deduce that the Hessian of $\sigma(u,a)- \phi(a)$ is non-degenerate in directions transverse to $T_{(u,a)}S_{\nu,\phi}$, so that $(u,a)\mapsto \sigma(u,a)- \phi(a)$ is Morse-Bott.

As $u$ is a non-degenerate critical point of $u\mapsto \sigma(u,a)$, the Hessian 
$d_u^2\sigma(u,a)$ is invertible, which implies that 
$X$ is uniquely determined by $Y$ in \eqref{twice}. Thus,
 under the natural projection $T_uU\times T_aV\rightarrow  T_aV$ the tangent space 
 $T_{(u,a)}S_{\nu,\phi}$ is sent isomorphically to $T_aV$. 

In view of the above considerations, Proposition~\ref{p:chaz} above then shows that, up 
to a non-zero multiplicative constant, the integral in \eqref{1sheet} is asymptotic to 
$t^{-\dim(U)/2}$ as $t\to \infty$. Applying the triangle inequality we deduce that for 
each $t\ge 1$ there exists $a\in V$ such that $W_{t\nu}(ta)$ is asymptotically greater 
than $t^{(\mathrm{ht}(G)- \dim (U))/2} = t^{c(G)}$. 

\subsection{The Whittaker function as superposition of plane waves}\label{s:superposition-waves}
We end this section with some remarks on the proof of Theorem \ref{general tau}.

\medskip

(i) The method of \S\ref{s:proof-general-tau} can be used to show the more precise result that there exists $a\in V$ such that $W_{t\nu}(ta)$ is asymptotically greater than $t^{c(G)}$, as soon as $V$ intersects the image $\mathrm{Im}(\Lambda_\nu \to A)$, regardless on whether or not the points in the fiber are non-degenerate. The important property is that the phase function $F_\nu(u,g)$ with parameters be non-degenerate in the sense of \cite{Duistermaat:oscillatory}, and that the symbol of the amplitude is non-vanishing.

\medskip

(ii) For $a$ in a negative translate of the Weyl chamber inside the light zone $\mathsf{L}$, the Whittaker function $a\mapsto W_\nu(a)$ is asymptotically a linear superposition of $|W|$ plane waves, where $W=W(\mathfrak{g},\mathfrak{a})$ is the Weyl group. This is because 
$F_\nu(u,a)$ is Morse, allowing for an application of the stationary phase approximation in its uniform version with parameters (see \cite[Theorem 7.7.6]{Ho} and \cite[Theorem 2.9]{Var1}).
The fibers of the Lagrangian mapping $\Lambda_\nu \to A$, correspond to the momentum of the plane waves. 
Since by construction the momenta are distinct, these plane waves are linearly independent which implies the lower bound of Theorem~\ref{general tau}.
The argument in \S\ref{s:proof-general-tau} above amounts to directly testing $W_\nu(a)$ against one of the plane waves. Some of the plane waves coalesce when $a$ approaches the caustic $\mathsf{C}_\nu$, which will be studied in the next section for $G=\PGL_3(\R)$.

\section{Proof of Theorem \ref{n=3 tau}}\label{sec:fine-phase}

In this section, we impose a self-duality assumption on the spectral parameter $\nu\in i\mathfrak{a}_+^*$. This allows us to give a precise description of the critical set for $G=\PGL_3(\R)$. More precisely, we shall provide explicit equations defining the shadow zone, the light region, and the caustic locus defined in \S\ref{ad-summary}.

Now a uniform description of the asymptotic behavior of the Jacquet-Whittaker function depends on more than just this partition. One also needs information on the {\it configuration} of the critical points, which is encoded in the singularities of the Lagrangian mapping $\Lambda^{\mathrm{red}}_{H} \to A$. Thus, in the main result of this section, Proposition \ref{prop:CRIT}, we shall decompose the caustic locus $\mathsf{C}$ into strata according to the degeneracy type, and decompose the light region $\mathsf{L}$ according to the size of the fibers. 

All of this information will determine the asymptotic size of $W_\nu(a)$, uniformly in $\nu$ and $a$. 

\subsection{Notation and hypotheses}\label{8not}
Let
\begin{equation*}
i\mathfrak{a}^*_{\rm sd}=\left\{\nu\in i\mathfrak{a}^*_+: \langle \nu,H_1\rangle=\langle \nu,H_2\rangle\right\}
\end{equation*}
be the center of the positive Weyl chamber $i\mathfrak{a}^*_+$, where $H_1={\rm 
diag}(1,-1,0)$, $H_2={\rm diag}(0,1,-1)$ are the standard co-roots. Unramified tempered 
principal series representations $\pi_\nu$ are self-dual precisely for $\nu\in 
i\mathfrak{a}^*_{\rm sd}$, whence the notation. Note that $\nu\in i\mathfrak{a}^*_{\rm 
sd}$ is the positive ray generated by $\nu_0=2\pi i(\varpi_1+\varpi_2)$, where $\varpi_i$ 
are the fundamental weights (the dual basis of the co-roots $H_i$). When studying the 
Lagrangian $\Lambda_{t\nu_0}^{\rm red}$ we can, without loss of generality, restrict to 
$t=1$; this follows from the scale invariance of the phase function in the $(\nu,a)$ 
parameters.

For notational simplicity, we shall work with Lie algebra structures rather than their duals. Thus instead of $i\mathfrak{a}^*$ we work with $\mathfrak{a}$, using the identification between the two given by the form $B(X,Y)={\rm tr}(XY)$. Thus, the matrix in $\mathfrak{a}$ corresponding to $\varpi_1+\varpi_2 \in \mathfrak{a}^*$ is $H:={\rm diag}(1,0,-1)$, and we shall work with $\Ad_K(H)$ rather than $\Ad_K^*(\varpi_1 + \varpi_2)$.

There is a Lagrangian mapping $\Lambda_H^{\rm red}\rightarrow A$ as described in \S\ref{sub:reduction}. Let
\[
\mathscr{F}(a)\subset\Lambda_H^{\rm red}
\]
denote the fiber over $a\in A$. Implementing~\eqref{trichotomy} we have 
$A=\mathsf{S}\sqcup\mathsf{L}\sqcup\mathsf{C}$, according to whether $\mathscr{F}(a)$ is 
empty, consists entirely of non-singular points, has at least one singular point, 
respectively.

\subsection{Statement of result}\label{sec:STATE}
We begin by defining certain subsets of $A$ which are represented graphically in Figure \ref{beautiful pic}. Let
\[
\mathsf{L}_1=\{27 y_1^4 y_2^4-18 y_1^2 y_2^2+4 y_2^2+4 y_1^2<1\}
\]
and
\[
\mathsf{L}_2=\{27 y_1^4 y_2^4-18 y_1^2 y_2^2+4 y_2^2+4 y_1^2>1\;\text{ and }\; y_1^2+y_2^2<1 \}. 
\]
Let $a_{\rm cusp}$ be the unique point in $A$ given by
\[
(y_1,y_2)=(1/\sqrt{3},1/\sqrt{3}).
\]
Finally put
\[
\mathsf{C}_1=\left\{y_1^2+y_2^2=1\right\},
\] 
and
\[
\mathsf{C}_2=\left\{(y_1,y_2)\neq (1/\sqrt{3},1/\sqrt{3}): 27 y_1^4 y_2^4-18 y_1^2 y_2^2+4 y_2^2+4 y_1^2=1\right\}.
\]

\begin{figure}
\ifpdf
\includegraphics[width=8cm]{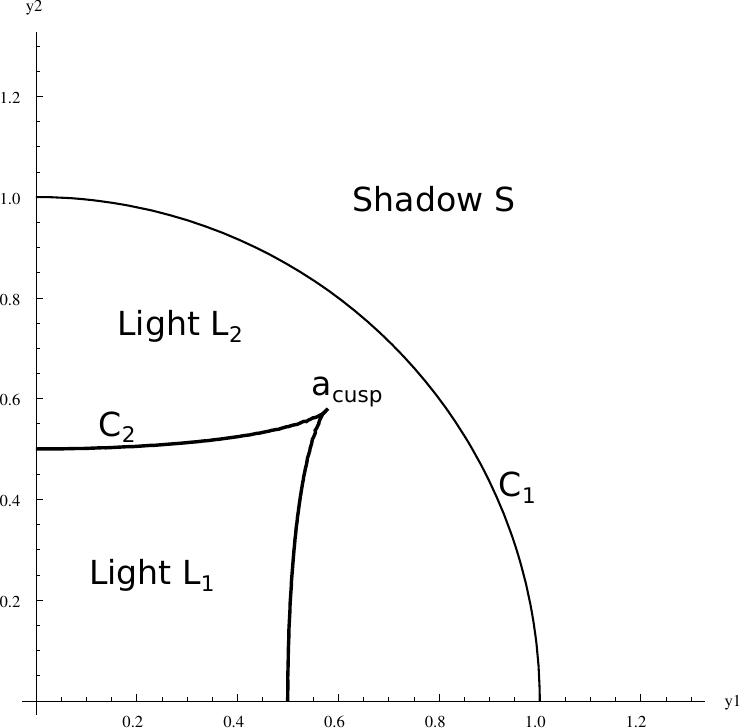}
\else
\fig{caustic-zones-shades.pdf}
\fi
\caption{Critical point configuration for self-dual spectral parameter.}\label{beautiful 
pic}
\label{fig:light-shadow}
\end{figure}

In this section we prove the following result.

\begin{thm}\label{prop:CRIT}
We have
\[
\mathsf{S}=\{y_1^2+y_2^2>1\},\qquad  \mathsf{L}=\mathsf{L}_1\sqcup\mathsf{L}_2,\qquad \mathsf{C}=\mathsf{C}_1\sqcup\{a_{\rm cusp}\}\sqcup\mathsf{C}_2.
\]
Moreover, we have the following critical point configurations:
\begin{enumerate}
\medskip
\item for all $a\in\mathsf{L}_1$ we have $|\mathscr{F}(a)|=6$;

\medskip
\item for all $a\in\mathsf{L}_2$ we have $|\mathscr{F}(a)|=2$;

\medskip
\item for all $a\in\mathsf{C}_1$ we have $|\mathscr{F}(a)|=1$, consisting of a point of fold type;
\medskip
\item for all $a\in\mathsf{C}_2$ we have $|\mathscr{F}(a)|=4$, two of which are non-degenerate, and two of which are degenerate of fold type;
\medskip
\item we have $|\mathscr{F}(a_{\rm cusp})|=2$, and the two points are of cuspidal type.
\end{enumerate}
\end{thm}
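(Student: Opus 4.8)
The plan is to reduce everything to the explicit elimination problem described in~\eqref{F(a)}: for fixed $(y_1,y_2)\in\mathbb{R}_{>0}^2$, study the common solutions $s\in\mathscr{A}(a)$ of the cubic $\det(s)=0$ and the quadric $\Tr(s^2)=2$. Parametrizing $\mathscr{A}(a)$ by $(x_1,x_2)$ as in~\eqref{A(a)}, the equation $\Tr(s^2)=2$ becomes an explicit conic in $(x_1,x_2)$ (an ellipse whose shape is governed by $y_1^2+y_2^2$), and $\det(s)=0$ becomes an explicit cubic curve in $(x_1,x_2)$ whose coefficients are polynomials in $y_1^2,y_2^2$. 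First I would write both curves out, use the ellipse equation to eliminate one variable, and obtain a single-variable polynomial $P_{y_1,y_2}(x)$ whose real roots (inside the relevant interval cut out by the ellipse) are in bijection with $\mathscr{F}(a)$. Counting real roots of $P_{y_1,y_2}$ as $(y_1,y_2)$ varies is then a discriminant computation: the locus where $P$ changes its number of real roots is where $\mathrm{disc}(P)=0$, and I expect this discriminant to factor, up to positive units, as (a power of) $(y_1^2+y_2^2-1)$ times $(27y_1^4y_2^4-18y_1^2y_2^2+4y_1^2+4y_2^2-1)$. This would simultaneously identify $\mathsf{S}$, $\mathsf{L}_1$, $\mathsf{L}_2$, $\mathsf{C}_1$, $\mathsf{C}_2$ and $a_{\rm cusp}$ (the latter being the common point of the two discriminant components, hence a higher-order degeneracy), and pin down the cardinalities $|\mathscr{F}(a)|$ in each open region by evaluating at one sample point per region.

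For the shadow/light dichotomy, the $\mathsf{S}$ description should also follow directly from~\eqref{light-zone-ineq}: for self-dual $\nu$ with $\|\nu\|^2$ normalized to the value forcing $\Tr(s^2)=2$, the inequality $e^{2\langle\alpha_1,X\rangle}+e^{2\langle\alpha_2,X\rangle}=y_1^2+y_2^2\le\|\nu\|^2=1$ is exactly the closure of the light zone, so $\mathsf{S}=\{y_1^2+y_2^2>1\}$ is immediate once I check the normalization constant, and the remaining work is to split $\{y_1^2+y_2^2\le 1\}$ into $\mathsf{L}\sqcup\mathsf{C}$ via the root count above. One subtlety I would be careful about: on $\mathsf{C}_1=\{y_1^2+y_2^2=1\}$ the ellipse $\Tr(s^2)=2$ degenerates (its interior collapses), so the intersection with the cubic must be analyzed as a limiting/tangency configuration rather than by the generic discriminant argument; I expect exactly one solution surviving there, of fold type.

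Identifying the singularity \emph{types} (fold vs.\ cusp vs.\ non-degenerate) is where the real work lies, and I expect it to be the main obstacle. By the isomorphism~\eqref{ker-Hess}, a point of $\mathscr{F}(a)$ is singular for $\Lambda_H^{\rm red}\to A$ exactly when the fiber Hessian $Q_\nu$ is degenerate, equivalently when the corresponding root of $P_{y_1,y_2}$ is a multiple root; a double root gives corank~$1$, and the multiplicity of the singularity is read off from the order of vanishing of $P$ (and its unfoldings in the parameters $(y_1,y_2)$) at that root. So I would: (i) on $\mathsf{C}_2$, show the relevant root of $P$ is a double root while two other roots stay simple — giving two non-degenerate points plus two $A_2$ (fold) points — and verify the unfolding is versal so the type is exactly $A_2$; (ii) at $a_{\rm cusp}$, show $P$ acquires a triple root (two of them, by the residual $\mathbb{Z}/2$ symmetry $y_1\leftrightarrow y_2$ of the self-dual picture), and check that the two-parameter family $(y_1,y_2)$ provides a versal unfolding of $x^4$, which is precisely the $A_3$ normal form, hence ``cuspidal type''; (iii) on $\mathsf{C}_1$, confirm the single point is $A_2$ by the same multiplicity-plus-versality check, being mindful of the ellipse degeneration noted above. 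The delicate part throughout is not the root-counting but verifying \emph{versality} of the unfolding — i.e.\ that the two essential parameters genuinely deform the germ in transverse directions — since only this rules out higher ($A_k$, $k\ge 4$) or worse degeneracies and justifies calling the strata $A_2$ and $A_3$; I would handle this by computing, at each special point, the rank of the map from parameter space to the local algebra $\mathbb{R}[x]/(\partial_x P)$, which should turn out to be full in each case.
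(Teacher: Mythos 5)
Your overall strategy — parametrize $\mathscr{A}(a)$, eliminate to a single-variable polynomial, read off caustics from the discriminant, determine singularity types from multiplicities — is indeed the paper's strategy. The paper parametrizes the conic $\chi_{\rm tr}(a)$ rationally by $\mathbb{P}^1(\R)$ and works with a sextic $E_a(t)$; your $P_{y_1,y_2}$ plays the same role. Your use of \eqref{light-zone-ineq} for $\mathsf{S}$, the discriminant factorization, and the sample-point computation of fiber cardinalities in the open regions all match the paper. However, the proposal glosses over three points at which the paper needs a genuine argument, and at least two of them are gaps, not just omitted details.

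First, vanishing of $\mathrm{disc}(P)$ detects \emph{complex} multiple roots; the caustic $\mathsf{C}$ is the image of \emph{real} degenerate critical points. On $\mathsf{C}_2$ one must show that the double root of $E_a$ responsible for the discriminant vanishing is actually real. The paper handles this by a connectedness argument (Corollary~\ref{cor:real}): if any $a$ on $\{a_{\rm cusp}\}\sqcup\mathsf{C}_2$ had a pair of complex-conjugate roots, the same would hold in a neighborhood, contradicting the all-real-roots conclusion in the adjacent region $\mathsf{L}_1$. Without something like this you can only conclude $\mathsf{C}\subset\mathsf{C}_1\sqcup\{a_{\rm cusp}\}\sqcup\mathsf{C}_2$ and not equality.

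Second, your claim that on $\mathsf{C}_2$ the configuration is ``two double roots and two simple roots'' cannot be settled by a sample point because $\mathsf{C}_2$ is a curve, not an open region, and the root configuration is a priori free to vary along it. The discriminant vanishing and the $\Z/2$-symmetry $x\mapsto -x$ only narrow the possibilities to two symmetric configurations: $(2,2,1,1)$ or $(3,3)$. The paper excludes the second by showing that the third principal subresultant coefficient $\mathrm{PSPC}_3(E_a,E_a')$ is nonvanishing on $\mathsf{C}_2$, via an explicit sign analysis on the square $\max_i|y_i|<1/\sqrt{3}$. Some replacement for this step is needed; the proposal omits it entirely.

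Third, and less seriously: the equivalence ``singular for $\Lambda^{\rm red}_H\to A$ $\Leftrightarrow$ multiple root of $P$'' is true but not definitional. The paper derives it from the explicit equation $D_a(x)=0$ for the tangency locus (Proposition~\ref{p:corank1}) together with the remarkable identity writing $D_a(t)$ as a combination of $E_a(t)$ and $E_a'(t)$ with polynomial coefficients; it is exactly this identity which converts ``tangency of the two curves'' into ``multiple root of the sextic.'' By contrast, the versality worry you raise is not needed for Theorem~\ref{prop:CRIT} itself: since the corank is verified to be $1$ (or, for multiplicity $\le 3$, follows automatically from Arnol'd's classification of simple singularities), the germ type is already pinned down by the Milnor number. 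Versality becomes relevant only later (Lemma~\ref{degen F2}), when one reduces the Whittaker integral to the Pearcey normal form.
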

We note the above varieties and equations are invariant under involution $(y_1,y_2)\mapsto (y_2,y_1)$ which is the reflection across the diagonal. This is explained by the equivariant action of $\mathrm{Ad}_{\mathsf{w}}$ on $\Lambda_H^{\rm red} \to A$.

\subsection{Idea of proof}
We shall work with the traceless symmetric matrices $\mathfrak{p}$ and the $4$-dimensional subspace of tridiagonals $\mathcal{J}$, rather than $\mathfrak{p}^*$ and $\mathcal{J}^*$. We denote by $\mathcal{J}_+$ the open cone with positive entries on the first diagonal and we coordinatize $\mathcal{J}_+$ as
\begin{equation}\label{A(a)}
\mathcal{J}_+=\left\{\begin{pmatrix} \frac{1}{3}(2x_1+x_2) & y_1 & 0\\ y_1& \frac{1}{3}(x_2-x_1) & y_2\\ 0 & y_2 & -\frac{1}{3}(x_1+2x_2)\end{pmatrix}: x_1,x_2\in \R,\ y_1,y_2\in\R_{>0}\right\}.
\end{equation}
Define the natural map $\mathcal{J}_+\to A$ by $a=\Mdiag(y_1y_2,y_2,1)$.
We systematically use the coordinates on $A$ given by the positive simple roots $y_1=\alpha_1(a)$ and $y_2=\alpha_2(a)$, i.e. $a=\Mdiag(y_1y_2,y_2,1)$. 
In particular $e^{\langle \varpi_1, H(a) \rangle}=y_1^{\frac23}y_2^{\frac13}$ and 
$e^{\langle \varpi_2, H(a) \rangle}=y_1^{\frac13}y_2^{\frac23}$.
According to Theorem~\ref{t:critical} there is a canonical bijection between 
$\Ad_K(H)\cap\mathcal{J}_+$ and $\Lambda_H^{\rm red}$, and this bijection commutes with 
the two projection maps onto $A$.

Let $\mathscr{A}(a)$ denote the fiber over $a\in A$ under the above map $\mathcal{J}_+\rightarrow A$ (it is a $2$-dimensional affine space).
 Moreover  Theorem \ref{t:critical} provides an explicit description of $\mathscr{F}(a)$ inside $\mathscr{A}(a)$. Namely, if
\[
\chi_{\rm det}(a)=\{s\in\mathscr{A}(a): \det (s)=0\}\qquad\text{and}\qquad \chi_{\rm tr}(a)=\{s\in\mathscr{A}(a): {\rm Tr}(s^2)=2\},
\]
then
\begin{equation}\label{F(a)}
\mathscr{F}(a)=\chi_{\rm det}(a)\cap \chi_{\rm tr}(a).
\end{equation}
This is the starting point for studying $\mathscr{F}(a)$ and the partition $A=\mathsf{S}\sqcup\mathsf{L}\sqcup\mathsf{C}$.

From \eqref{A(a)} we get
\[
  \begin{aligned}
\chi_{\rm det}(a)&=\left\{9y_1^2(x_1+2x_2)-9y_2^2 (2x_1+ x_2)=2 (x_2^3- x_1^3)+3( x_1 x_2^2- x_1^2 x_2)\right\}\\
\chi_{\rm tr}(a)&=\left\{x_1^2+x_1x_2+x_2^2=3(1-y_1^2-y_2^2)\right\}.
\end{aligned}
\]
The idea of the proof of Theorem \ref{prop:CRIT} is to study the ``intersection configuration" of $\chi_{\rm det}(a)$ with $\chi_{\rm tr}(a)$ -- affine curves in $\mathscr{A}(a)$ of degree 3 and 2, respectively -- as $a$ varies throughout $A$. For $a$ near the origin they will intersect (transversally) in 6 points, and for $a$ large they will not intersect at all (for $\chi_{\rm tr}(a)$ will in fact be empty); these two extremal situations correspond to the regions $\mathsf{L}_1$ and $\mathsf{S}$. For intermediate ranges of $a$, transversal intersections will coalesce into points of tangency, before disappearing. This can happen in a few different ways, roughly corresponding to the ways in which a degree 6 polynomial can factorize over the reals. On the other hand, symmetry constraints will limit which combinations can arise. Once the intersection configuration has been mapped out, one can then read off the underlying singularity type by the numerical invariants recalled in \S\ref{num-inv}. 

\begin{figure}[h]
  \centering
\ifpdf
\begin{subfigure}[b]{0.25\textwidth}
  \centering
\includegraphics[width=\textwidth]{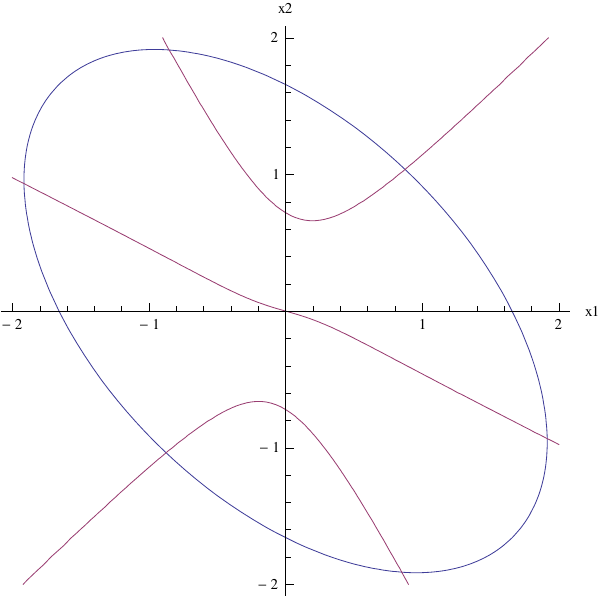}
\caption{Light $a\in \mathsf{L}_1$ \\  $y_1=.257$\ ; $y_2=.129$.}
\end{subfigure}
\qquad
\begin{subfigure}[b]{0.25\textwidth}
  \centering
\includegraphics[width=\textwidth]{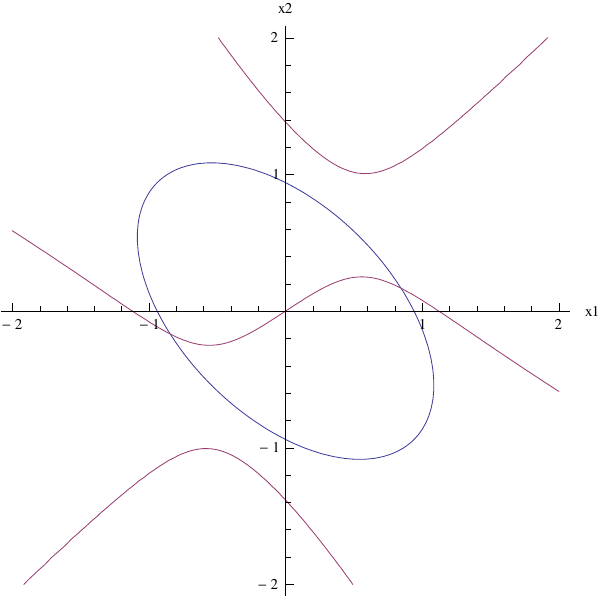}
\caption{Light $a\in \mathsf{L}_2$ \\ $y_1=.614$\ ; $y_2=.573$.}
\end{subfigure}
\qquad
\begin{subfigure}[b]{0.25\textwidth}
  \centering
\includegraphics[width=\textwidth]{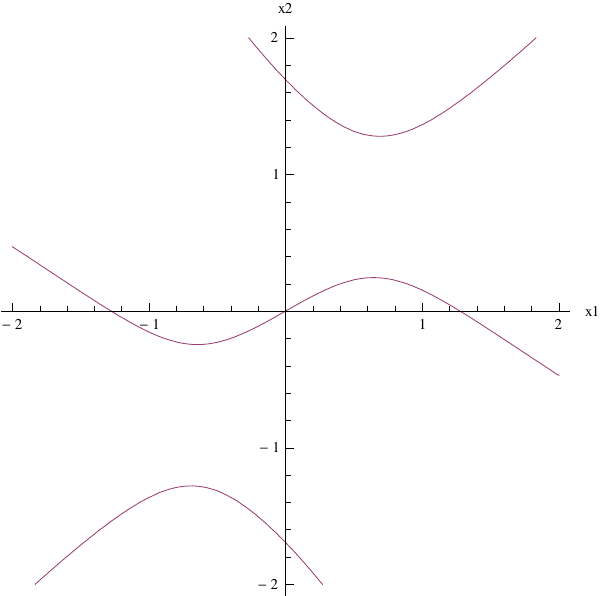}
\caption{Caustic $a\in \mathsf{C}_1$ \\ $y_1=.739$\ ; $y_2=.674$.}
\end{subfigure}

\begin{subfigure}[b]{0.25\textwidth}
  \centering
\includegraphics[width=\textwidth]{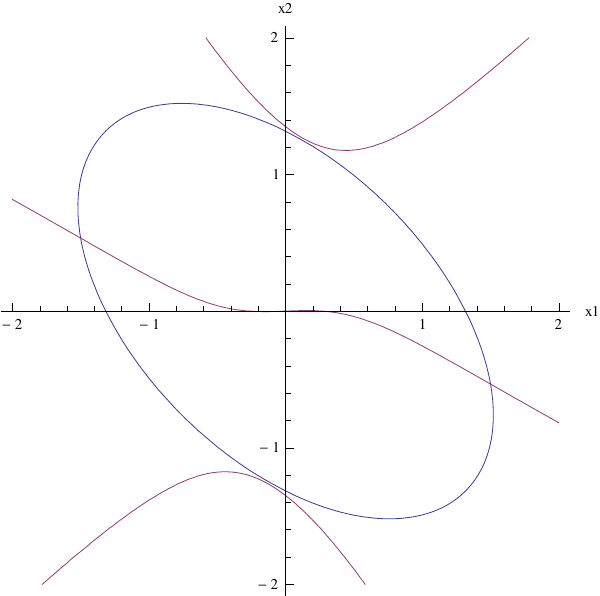}
\caption{Caustic $a\in \mathsf{C}_2$ \\ $y_1=.525$\ ; $y_2=.382$.}
\end{subfigure}
\qquad
\begin{subfigure}[b]{0.25\textwidth}
  \centering
\includegraphics[width=\textwidth]{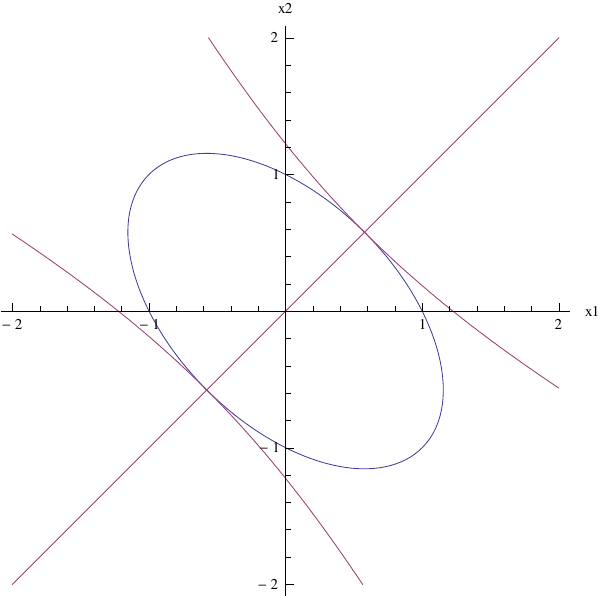}
\caption{Caustic $a= a_{\mathrm{cusp}}$ \\ $y_1=y_2=.57735$.}
\end{subfigure}
\else
\fig{intersections.}
\fi
\caption{The curves $\chi_{\rm det}(a)$ and $\chi_{\rm tr}(a)$ for different values of $a$}
\label{fig:intersections}
\end{figure}

In Figure \ref{fig:intersections}, we show five different intersection configurations corresponding to the five cases of Theorem \ref{prop:CRIT}. They can be mapped onto the corresponding strata of Figure \ref{beautiful pic}. Note that in the configuration (C) representing the outer caustic $\mathsf{C}_1$, the ellipse in (B) has collapsed to a single point; in the shadow zone $\mathsf{S}$ (not pictured) this point has disappeared. Compare Figure \ref{fig:intersections} to the classical bifurcation diagram of cuspidal singularities, as given, for example, in \cite[Figure 4]{CdV:singular-lagrangian}.

We illustrate the argument by carrying it out along the ray $y_1=y_2=y$ with $y>0$. In this case the equation for $\chi_{\rm det}(a)$ simplifies. Indeed the linear term $x_1-x_2$ factors, making $\chi_{\rm det}(a)$ the union of the line $x_1=x_2$ and the quadric hyperbola with equation
\[
9y^2 = 2x_1^2 + 5 x_1x_2 + 2x_2^2.
\]
The intersection with $\chi_{\rm tr}(a)$ can be easily computed and we find that the  different zones $a\in \mathsf{L}_1$, $a \in \mathsf{C}_2$, $a\in \mathsf{L}_2$, $a\in \mathsf{C}_1$, $a\in \mathsf{S}$ are given by the intervals
\[
0<y<\tfrac{1}{\sqrt{3}},\quad y=\tfrac{1}{\sqrt{3}},\quad \tfrac{1}{\sqrt{3}}<y<\tfrac{1}{\sqrt{2}},\quad y=\tfrac{1}{\sqrt{2}},\quad\text{and}\quad y>\tfrac{1}{\sqrt{2}},
\]
respectively, a result which agrees with Figure~\ref{beautiful pic} and Theorem~\ref{prop:CRIT}.

\subsection{The shadow zone}\label{sd-exact-locus}

In this section we establish the first statement in Theorem \ref{prop:CRIT} regarding shadow zone. We also establish the cardinality of the fibers in $\mathsf{C}_1$ and a lower bound in the fibers in the other regions.

\begin{prop}\label{sd-converse}
We have
\[
\mathsf{S}=\{y_1^2+y_2^2>1\}.
\]
Moreover, when $y_1^2+y_2^2=1$ there is one unique critical point, and if $y_1^2+y_2^2<1$ there are at least two distinct critical points.
\end{prop}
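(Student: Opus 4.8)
The plan is to work directly with the explicit equations for $\chi_{\rm det}(a)$ and $\chi_{\rm tr}(a)$ in the coordinates $(x_1,x_2)$ on $\mathscr{A}(a)$ recalled above, exploiting the elementary fact that the quadratic form $Q(x_1,x_2)=x_1^2+x_1x_2+x_2^2$ cutting out $\chi_{\rm tr}(a)$ is positive definite. Set $c=1-y_1^2-y_2^2$, so that $\chi_{\rm tr}(a)=\{Q=3c\}$; the sign of $c$ organizes the whole argument. If $c<0$ then $\chi_{\rm tr}(a)=\emptyset$, hence $\mathscr{F}(a)=\chi_{\rm det}(a)\cap\chi_{\rm tr}(a)=\emptyset$, so $a\in\mathsf{S}$. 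If $c=0$ then $\chi_{\rm tr}(a)=\{(0,0)\}$, and the origin satisfies the equation of $\chi_{\rm det}(a)$ (both sides vanish at $x_1=x_2=0$), so $\mathscr{F}(a)=\{(0,0)\}$ is a single point. Together with the case $c>0$ below this yields $\mathsf{S}=\{y_1^2+y_2^2>1\}$ and the uniqueness assertion on $\{y_1^2+y_2^2=1\}$.

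The substantive case is $c>0$, where $\chi_{\rm tr}(a)$ is a genuine ellipse $\mathcal{E}$ — a smooth closed curve, invariant under the antipodal map $\sigma\colon x\mapsto-x$, which has no fixed point on $\mathcal{E}$ since $0\notin\mathcal{E}$. Let $P(x_1,x_2)$ denote the difference of the two sides of the equation defining $\chi_{\rm det}(a)$, so that $\mathscr{F}(a)=\{x\in\mathcal{E}\colon P(x)=0\}$. Writing $P=P_3+P_1$ for its homogeneous cubic and linear parts (it has no quadratic or constant term), one finds by inspection the factorization
\[
P_3=(x_2-x_1)(2x_1+x_2)(x_1+2x_2),
\]
a product of three pairwise non-proportional linear forms, and one observes that $P_3$ and $P_1$ are both odd, so that $P\circ\sigma=-P$.

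The crux is the claim that $P$ does not vanish identically on $\mathcal{E}$. To prove it, I would parametrize $\mathcal{E}$ by $x=r(\theta)(\cos\theta,\sin\theta)$ with $r(\theta)^2Q(\cos\theta,\sin\theta)=3c$; then $P|_{\mathcal{E}}\equiv 0$ becomes $r(\theta)^2P_3(\cos\theta,\sin\theta)+P_1(\cos\theta,\sin\theta)=0$ for all $\theta$, and substituting $r(\theta)^2=3c/Q(\cos\theta,\sin\theta)$ and clearing denominators gives the homogeneous identity $3c\,P_3=-Q\,P_1$ on the unit circle, hence (by homogeneity) as polynomials. This is impossible: since $c\neq 0$ and $P_3\neq 0$, the identity forces the irreducible quadratic $Q$ to divide $P_3$, which is absurd because $Q$ is not proportional to any of the three linear factors of $P_3$. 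Granting the non-vanishing, pick $p\in\mathcal{E}$ with $P(p)\neq 0$; then $P(-p)=-P(p)$ has the opposite sign, and since $p\neq-p$ the two arcs of $\mathcal{E}\setminus\{p,-p\}$ each carry a zero of $P$ by the intermediate value theorem. These zeros lie on disjoint arcs, hence are distinct, so $\mathscr{F}(a)$ has at least two points.

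The main obstacle is precisely this non-vanishing claim for $P|_{\mathcal{E}}$: once the factorization of $P_3$ is written down it collapses to a one-line unique-factorization argument, but setting up the passage to a homogeneous polynomial identity through the parametrization of the ellipse (and clearing the denominator coming from $r(\theta)$) is the step requiring care. The sharper information in Theorem~\ref{prop:CRIT} about the strata $\mathsf{C}_1$, $\mathsf{C}_2$, $a_{\rm cusp}$ and the exact fiber cardinalities over $\mathsf{L}_1,\mathsf{L}_2$ — refining the bound $|\mathscr{F}(a)|\ge 2$ obtained here — is deferred to the subsequent subsections.
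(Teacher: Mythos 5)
Your proof is correct, and it is structurally close to the paper's: both realize $\chi_{\rm tr}(a)$ as empty, a point, or an ellipse according to the sign of $1-y_1^2-y_2^2$ (using positive definiteness of $x_1^2+x_1x_2+x_2^2$), both exploit that $\det$ is odd under the antipodal map on $\mathscr{A}(a)$, and both finish with the intermediate value theorem on the two arcs of the circle. Where you diverge is the key step of producing a sign change for $\det$ on the ellipse. The paper exhibits two explicit points $s_\pm = \pm\mathrm{diag}(\alpha,-\alpha,0)+\Upsilon(a)$ of $\chi_{\rm tr}(a)$, computes $\det(s_\pm)=\mp\alpha y_2^2$ directly, and is done. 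You instead prove abstractly that the cubic $P$ cannot vanish identically on the ellipse, by homogenizing the would-be identity to $3c\,P_3=-Q\,P_1$ and invoking unique factorization: the irreducible real quadratic $Q$ cannot divide the product of three real linear forms $P_3$. This is a nice self-contained alternative, though it costs more (the parametrization, the reduction to a polynomial identity, the factorization of $P_3$) than the paper's one-line evaluation at $s_\pm$. A second difference: you establish the inclusion $\{y_1^2+y_2^2>1\}\subseteq\mathsf{S}$ directly from positive definiteness of the trace form, whereas the paper cites the general compactness inequality \eqref{light-zone-ineq} proved earlier from the Toda-Lagrangian picture. Your route makes this proposition logically independent of \S\ref{ad-summary}, which is a modest gain.

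One small slip that does not affect the argument: with your convention $P=\text{LHS}-\text{RHS}$, the cubic part factors as $P_3=(x_1-x_2)(2x_1+x_2)(x_1+2x_2)$, i.e.\ the opposite sign from what you wrote; the UFD argument only uses that $P_3$ splits into distinct real linear factors, so this is harmless.
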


\begin{proof}
By \eqref{light-zone-ineq}, it suffices to prove that $\mathsf{S}\subseteq \{y_1^2+y_2^2>1\}$. According to \eqref{F(a)}, we must show that if $a\in A$ verifies $y^2_1+y^2_2\leq 1$ then $\chi_{\rm det}(a)\cap\chi_{\rm tr}(a)\neq\emptyset$. Any element in $\mathscr{A}(a)$ has norm-squared $2y_1^2+2y_2^2+d^2+e^2+f^2$, for some diagonal entries $d, e, f$. From the hypothesis $y_1^2+y_2^2\leq 1$ we deduce that $\chi_{\rm tr}(a)$ is not empty. It contains, say, the elements
\[
s_{\pm}=\pm {\rm diag}(\alpha,-\alpha,0)+\Upsilon (a), \qquad\text{where}\qquad \Upsilon (a)=\begin{pmatrix} 0 & y_1 & 0\\ y_1& 0 & y_2\\ 0 & y_2 & 0\end{pmatrix},
\]
for some $\alpha\geq 0$.

Note that $y_1^2+y_2^2=1$ if and only if $\chi_{\rm tr}(a)=\{\Upsilon(a)\}$. As $\Upsilon(a)$ has determinant $0$, we have $\Upsilon(a)\in\chi_{\rm det}(a)$ as required.

If $y_1^2+y_2^2<1$ then the points $s_{\pm}$ are distinct and $\det(s_{\pm})=\pm\alpha y_2^2$ are of opposite sign. Now $\chi_{\rm tr}(a)$, being an ellipse, is connected. By the intermediate value theorem, there is $s\in\chi_{\rm tr}(a)$ such that $\det(s)=0$. As the same is true of the antipode of $s$, there are at least two district points lying in $\chi_{\rm det}(a)\cap \chi_{\rm tr}(a)$.
\end{proof}

We illustrate the argument of the proof above with a graph
 in the $(x_1,x_2)$-plane of the two curves $\chi_{\rm det}(a)$ and $\chi_{\rm tr}(a)$. Let $s_i:= \alpha d_i+\Upsilon(a)$ where 
\begin{alignat*}{3}
  d_1:={\rm diag}(-1,1,0) \quad &  d_2:={\rm diag}(0,1,-1) \quad  & d_3:={\rm diag}(1,0,-1) \\
  d_4:={\rm diag}(1,-1,0) \quad &  d_5:={\rm diag}(0,-1,1) \quad & d_6:={\rm diag}(-1,0,1).
\end{alignat*}
Thus in particular $s_1=s_+$ and $s_4=s_-$. We have the property that $s_i\in\chi_{\rm tr}(a)$ for all $i=1,\ldots ,6$ and the points are cyclically ordered. Suppose by symmetry that $y_1>y_2$. Then it can be verified that $\det(s_i)>0$ if $i\in \{1,2,3\}$ while $\det(s_i)<0$ if $i\in \{4,5,6\}$. Thus there are at least two intersection points in $\chi_{\rm det}(a)\cap\chi_{\rm tr}(a)$ which was how we established Proposition~\ref{sd-converse}. We consider the following numerical values $y_1=.614$ and $y_2=.573$ in Figure~\ref{fig:s1-6}. 

\begin{figure}[h]
\ifpdf
\includegraphics[width=6cm]{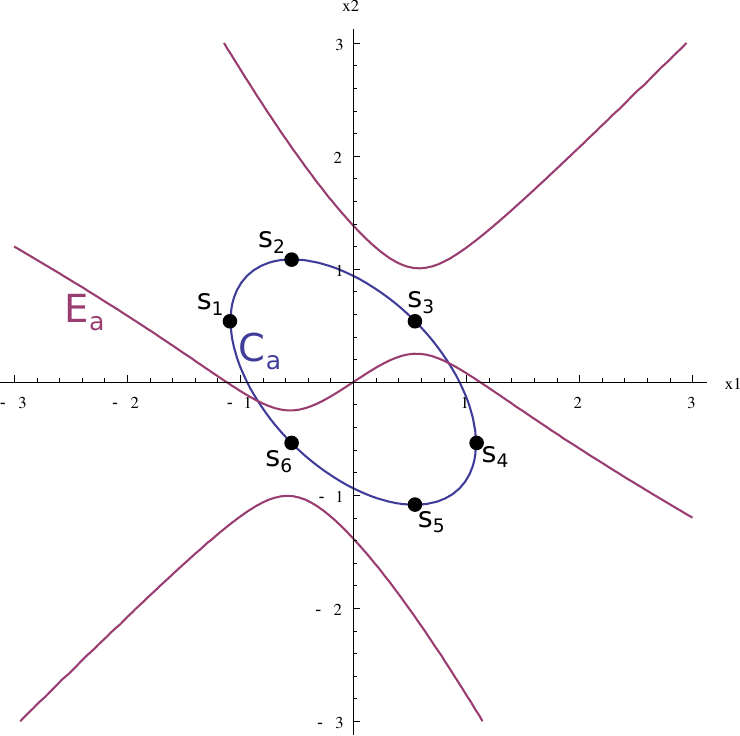}
\else
\fig{intersection-s1-6.pdf}
\fi
\caption{The curves $\chi_{\rm det}(a)$ and $\chi_{\rm tr}(a)$ for $a\in \mathsf{L}_2$.}\label{fig:s1-6}
\end{figure}

The case $y_1^2+y_2^2=1$ is even simpler since the ellipse collapses into a single point $\{\Upsilon(a)\}$ and the resulting configuration is shown in Figure~\ref{fig:intersections} on Page~\pageref{fig:intersections} with the numerical values $y_1=.739$ and $y_2=.673$. 

\subsection{Determination of the caustic locus}\label{s:n3:degenerate} 

For $a\in A$ consider the polynomials
\begin{align*}
C_a(x)&=x_1^2+x_1x_2+x_2^2-3 (1-y_1^2-y_2^2),\\
E_a(x)&=2x_1^3+3x_1^2x_2 +9x_1y_1^2+18x_2y_1^2- 2x_2^3-3x_1x_2^2-9x_2y_2^2-18 x_1y_2^2,\\
D_a(x)&=x_1y_1^2+x_2y_2^2 - x_1^2x_2-x_1x_2^2.
\end{align*}
The first two are the defining equations for the curves $\chi_{\rm tr}(a)$ and $\chi_{\rm det}(a)$, respectively. The role of the last one will be explained presently.

\begin{prop}\label{p:corank1}
We have $a\in\mathsf{C}$ if and only if there exists $x\in \R^2$ satisfying
\[
C_a(x)=E_a(x)=D_a(x)=0.
\]
Moreover, any singularity for $\Lambda_H^{\rm red}\rightarrow A$ (and hence any degenerate critical point of the Whittaker phase function $F_H$) is of corank $1$.\end{prop}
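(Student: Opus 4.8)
The plan is to exploit the explicit model of the reduced Lagrangian furnished by Theorem~\ref{t:critical} together with \eqref{F(a)}. Since $H$ is regular, $\mathscr{L}_H$ is a smooth Lagrangian submanifold of $\mathcal{J}\cong\R^4$, and $\Lambda_H^{\rm red}$ is its open subset $\{y_1,y_2>0\}$, cut out there by the two equations $C_a(x)=0$ and $E_a(x)=0$; the Lagrangian mapping is the projection $\pi:(x_1,x_2,y_1,y_2)\mapsto(y_1,y_2)$. Smoothness means the full gradients $\nabla_{(x,y)}C_a$ and $\nabla_{(x,y)}E_a$ are everywhere independent along $\mathscr{L}_H$. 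A point $p=(x,a)\in\Lambda_H^{\rm red}$ is singular for $\pi$ iff $\ker d\pi_p\neq 0$; since $\ker d\pi_p$ consists of the tangent vectors to $\Lambda_H^{\rm red}$ at $p$ whose $(y_1,y_2)$-components vanish, this happens iff some nonzero $v\in\R^2$ is annihilated by both restricted gradients $\nabla_x C_a(x)$ and $\nabla_x E_a(x)$, i.e. iff these are linearly dependent, i.e. iff the Jacobian $\partial(C_a,E_a)/\partial(x_1,x_2)$ vanishes at $x$.

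The computational core is the identity
\[
\frac{\partial(C_a,E_a)}{\partial(x_1,x_2)}=(2x_1+x_2)\,\partial_{x_2}E_a-(x_1+2x_2)\,\partial_{x_1}E_a=27\,D_a(x),
\]
an elementary expansion in which the purely cubic-in-$x$ terms cancel and the remaining terms reassemble into $27\,(x_1y_1^2+x_2y_2^2-x_1^2x_2-x_1x_2^2)$. Granting this, a point of $\Lambda_H^{\rm red}$ over $a$ is singular for $\pi$ exactly when $D_a(x)=0$; intersecting with the defining equations $C_a(x)=E_a(x)=0$ of the fibre yields both directions of the first assertion. I would also record the one degenerate sub-case of the gradient argument: $\nabla_x C_a=(2x_1+x_2,\,x_1+2x_2)$ vanishes only at $x=0$, where $C_a(0)=-3(1-y_1^2-y_2^2)$ and $D_a(0)=E_a(0)=0$, so over the locus $y_1^2+y_2^2=1$ (which is $\mathsf{C}_1$) the point $x=0$ lies on $\Lambda_H^{\rm red}$ and is automatically caught by the system $C_a=E_a=D_a=0$, consistently with the claim.

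For the corank statement it suffices to see that $d\pi_p$ is never the zero map on $\Lambda_H^{\rm red}$: if it were, $T_p\Lambda_H^{\rm red}$ would consist entirely of vectors whose $(y_1,y_2)$-components vanish, forcing both $\nabla_x C_a(x)=0$ and $\nabla_x E_a(x)=0$; but $\nabla_x C_a=0$ forces $x=0$, and $\nabla_x E_a(0)=(9y_1^2-18y_2^2,\,18y_1^2-9y_2^2)$ is nonzero whenever $y_1,y_2>0$. Hence a singular $d\pi_p$ always has one-dimensional kernel, so every Lagrangian singularity of $\Lambda_H^{\rm red}\to A$ has corank exactly $1$; via the isomorphism \eqref{ker-Hess} identifying $\ker d_\nu$ with $\ker Q_\nu$, the corank of any degenerate critical point of the Whittaker phase function $F_H$ is likewise $1$. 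I expect the main obstacle to be purely organizational---cleanly reducing ``$p$ is a Lagrangian singularity'' to ``the fibre curves $\{C_a=0\}$ and $\{E_a=0\}$ are tangent at $x$, or one of them is singular there'' to ``$D_a(x)=0$''---together with carrying out the Jacobian expansion without sign or coefficient slips; the geometric content is entirely elementary once the model of $\Lambda_H^{\rm red}$ from Theorem~\ref{t:critical} is in hand.
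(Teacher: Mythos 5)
Your proof is correct, and it takes a genuinely different route from the paper. The paper parametrizes the tangent space to $\mathscr{L}_H$ at $s$ via the adjoint action: it identifies $T(s)=\{[k,s]:k\in\mathfrak{k}\}$, cuts out $\mathfrak{t}(s)=T(s)\cap\mathcal{J}$ by the single linear condition on the $(1,3)$-entry, and then detects a nonzero vector in $\mathfrak{a}\cap\mathfrak{t}(s)$ by a $2\times 2$ system whose determinant is $(x_1+x_2)D_a(x)$. This introduces an extraneous factor $x_1+x_2$, and the paper must separately check that $x_1+x_2\neq 0$ on the fiber before concluding. You instead exploit that $\Lambda_H^{\rm red}$ is, in the coordinates \eqref{A(a)}, the complete intersection $\{C_a=0,\,E_a=0\}$ inside $\mathcal{J}_+\cong\R^4$, so its tangent space at $p=(x,a)$ is $\ker\bigl(\nabla_{(x,y)}C_a,\nabla_{(x,y)}E_a\bigr)$, and $\ker d\pi_p\neq 0$ precisely when the restricted gradients $\nabla_xC_a$ and $\nabla_xE_a$ are dependent. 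The resulting Jacobian is exactly $27\,D_a(x)$ with no extraneous factor (the identity checks out), which is cleaner. Your corank argument — that $\nabla_xC_a$ and $\nabla_xE_a$ cannot both vanish since the former forces $x=0$ and $\nabla_xE_a(0)=(9y_1^2-18y_2^2,\,18y_1^2-9y_2^2)\neq 0$ for $y_1,y_2>0$ — is also sound; the paper instead observes the $2\times 2$ matrix is never zero because $y_1y_2\neq 0$. Both proofs ultimately rest on the same inputs (Theorem~\ref{t:critical}, \eqref{F(a)}, the smoothness of the Lagrangian, \eqref{ker-Hess}), but your Cartesian version is more self-contained and avoids the supplementary nonvanishing check; the paper's version makes the link to the coadjoint-orbit picture (tangent vectors as $[k,s]$) more visible, which fits the narrative of \S\ref{sub:coadjoint}.
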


\begin{proof}
Using the coordinates \eqref{A(a)}, the existence of a solution to $C_a(x)=E_a(x)=0$ is equivalent to the fiber $\mathscr{F}(a)$ being non-empty. To characterize $a\in\mathsf{C}$ we must then, in view of \eqref{ker-Hess}, determine the $a$ for which there is $s\in\mathscr{F}(a)$ whose tangent space along $\Lambda_H^{\rm red}$ fails to be transverse to the vertical fiber $T_a^*A\simeq \mathfrak{a}$. If $\mathfrak{t}(s)$ denotes this tangent space, then this condition is equivalent to $\mathfrak{a}\cap\mathfrak{t}(s)\neq 0$. We shall show, again using the coordinates \eqref{A(a)}, that this is the same as the existence of a solution to $D_a(x)=0$.

Let $s\in \Ad_K(H)$ and write $T(s)$ for the tangent space of $s$ along the whole adjoint orbit $\Ad_K(H)$. Then we may identify $T(s)$ with $\{[k,s]: k\in\mathfrak{k}\}$. Now if $s\in \Ad_K(H)\cap \mathcal{J}$ then $\mathfrak{t}(s)$ may be identified with $T(s)\,\cap\,\mathcal{J}$. To compute this intersection explicitly we denote matrices in $\mathfrak{k}$ as
\begin{equation*}
k=\begin{pmatrix} 0 & b & c\\ -b& 0 & a\\ -c & -a & 0\end{pmatrix} \qquad (a,b,c\in\R).
\end{equation*}
Taking $s\in\Lambda_H^{\rm red}$, viewed as an element of ${\rm Ad}_K(H)\cap \mathcal{J}_+$ via the diagram \eqref{commutative} and with the coordinates of \eqref{A(a)}, and setting the upper right-hand entry of $[k,s]$ to zero, we find that $\mathfrak{t}(s)$ is the subspace of $T(s)$ cut out by the equation $-ay_1+by_2-c(x_1+x_2)=0$. Having computed $\mathfrak{t}(s)$, one then finds $\mathfrak{a}\cap\mathfrak{t}(s)$ by setting the off-diagonals of $\mathfrak{t}(s)$ to zero. This produces two extra linear constraints $cy_2-bx_1=0$, $ax_2+cy_1=0$. The determinant of this linear system is $D_a(x)$, which establishes the first claim.

To see the second claim, note that solutions to the above matrix equation precisely 
describe the kernel of the differential of the map $\Lambda_H^{\rm red}\rightarrow A$ at 
$s$ because it can identified as the intersection $\mathfrak{t}(s)\cap \mathfrak{a}$ of 
the tangent space and the vertical fiber. The corank $1$ property of singularities for 
this map is then evident since $y_1y_2\neq 0$. The link to the Whittaker phase function 
is made via \eqref{ker-Hess}.
\end{proof}

\begin{lem}\label{lem:C1} We have $\mathsf{C}_1\cup\{a_{\rm cusp}\}\subset\mathsf{C}$.
\end{lem}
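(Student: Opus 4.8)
The plan is to invoke Proposition~\ref{p:corank1}, according to which $a\in\mathsf{C}$ precisely when the three polynomials $C_a,E_a,D_a$ admit a common real zero $x\in\R^2$. So for each $a$ in $\mathsf{C}_1\cup\{a_{\rm cusp}\}$ it suffices to exhibit one such $x$ explicitly, and the entire argument reduces to two short verifications.

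For the outer arc $\mathsf{C}_1=\{y_1^2+y_2^2=1\}$ I would take $x=(0,0)$. Inspecting the formulas defining $E_a$ and $D_a$, every monomial contains a factor $x_1$ or $x_2$, so $E_a(0)=D_a(0)=0$ for \emph{every} $a\in A$, while $C_a(0)=-3(1-y_1^2-y_2^2)$ vanishes exactly on $\mathsf{C}_1$. Thus $x=(0,0)$ is a common zero, giving $\mathsf{C}_1\subset\mathsf{C}$. In the coordinates \eqref{A(a)} this $x$ is the matrix $\Upsilon(a)$, which matches the proof of Proposition~\ref{sd-converse}, where $\chi_{\rm tr}(a)$ was seen to collapse onto the single point $\Upsilon(a)$ precisely when $y_1^2+y_2^2=1$; here that point is forced to be singular for $\Lambda_H^{\rm red}\to A$.

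For $a_{\rm cusp}$, where $y_1^2=y_2^2=1/3$, I would search for a zero on the diagonal line $x_1=x_2=t$. A direct computation (using the same factorization of $E_a$ exploited in the discussion of the ray $y_1=y_2$) shows $E_a$ has $x_1-x_2$ as a factor, so $E_a(t,t)\equiv 0$; moreover $D_a(t,t)=2t(\tfrac13-t^2)$ and $C_a(t,t)=3t^2-1$. Taking $t=\pm 1/\sqrt3$ kills all three, so $x=\pm(1/\sqrt3,1/\sqrt3)$ are common zeros and $a_{\rm cusp}\in\mathsf{C}$; these are the two points that Theorem~\ref{prop:CRIT}(5) will later identify as cuspidal. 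There is no genuine obstacle in this lemma — it is a bookkeeping check — the only point requiring care is keeping straight the dictionary between the affine coordinates $(x_1,x_2)$ on $\mathscr{A}(a)$ and the matrix entries in \eqref{A(a)}, so that the exhibited $x$ really corresponds to a point of $\mathscr{F}(a)=\chi_{\rm det}(a)\cap\chi_{\rm tr}(a)$.
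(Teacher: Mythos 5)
Your proposal is correct and takes essentially the same approach as the paper: both reduce to the characterization of Proposition~\ref{p:corank1} and then exhibit an explicit common zero of $C_a,E_a,D_a$. The only difference is cosmetic — the paper phrases the verification in terms of the matrices $\Upsilon(a)$ and $s^\pm_{\rm cusp}$ (invoking Proposition~\ref{sd-converse} to identify $\Upsilon(a)$ as the unique critical point on $\mathsf{C}_1$), whereas you carry out the equivalent computation directly in the affine coordinates $(x_1,x_2)$, obtaining $x=(0,0)$ and $x=\pm(1/\sqrt3,1/\sqrt3)$, which are precisely the $(x_1,x_2)$-coordinates of $\Upsilon(a)$ and of $s^\pm_{\rm cusp}$.
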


\begin{proof} From Proposition \ref{sd-converse} it follows that every point of 
$\mathsf{C}_1$ is critical. To show that every $a\in \mathsf{C}_1$ is in fact degenerate, 
we note that the corresponding $s\in \mathscr{F}(a)$ has vanishing diagonal elements, so 
that equation $D_a(x)=0$ is trivially true.

Note that the symmetric matrices
\[
\splus= \frac{1}{\sqrt{3}} \begin{pmatrix} 1 & 1 & 0\\ 1 & 0 &1\\ 0 &1 & -1\end{pmatrix}\quad\text{and}\quad \sminus=\frac{1}{\sqrt{3}} \begin{pmatrix} -1 & 1 & 0\\ 1 & 0 &1\\ 0 &1 & 1\end{pmatrix}
\]
lie in $\Ad_K(H)$, for their characteristic polynomial $x-x^3$ is the same as that of $H$. Moreover, both $\splus$ and $\sminus$ lie in the affine subspace $\mathscr{A}(a_{\rm cusp})$. Thus both $\splus$ and $\sminus$ are critical points of $F_H$ over $a_{\rm cusp}$. Finally, $\splus$ and $\sminus$ verify the equation $D_a(x)=0$, which shows that they are degenerate. 
\end{proof}

Now observe that the equation $C_a(x)=0$ is that of a conic, which, if $y_1^2+y_2^2<1$, is not reduced to a point. We may therefore choose a birational map from $\mathbb{P}^1(\R)$ to its solution locus. We make the substitution
\begin{equation}\label{parametrization}
x_1=\frac{1-t^2}{1+t+t^2} \sqrt{3(1-y_1^2-y_2^2)},\qquad x_2=\frac{t(t+2)}{1+t+t^2} \sqrt{3(1-y_1^2-y_2^2)}.
\end{equation}
With this parametrization, the polynomials $E_a(x)$ and $D_a(x)$ become
\begin{align*}
E_a(t)&=y_1^2(t^2+4t+1)^3+y_2^2(t^2-2t-2)^3+2t^6+6t^5-15t^4-40t^3-15t^2+6t+2,\\
D_a(t)&=y_1^2(1-t^2)(t^2+4t+1)^2+y_2^2 (2t+t^2)(t^2-2t-2)^2+6t^5+15t^4-15t^2-6t,
\end{align*}
again under the hypothesis that $y_1^2+y_2^2<1$.

\begin{prop}\label{C-inside}
We have $\mathsf{C}\subset \mathsf{C}_1\sqcup \{a_{\rm cusp}\}\sqcup\mathsf{C}_2$.
\end{prop}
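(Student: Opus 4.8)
The goal is to show the reverse inclusion $\mathsf{C}\subset \mathsf{C}_1\sqcup\{a_{\rm cusp}\}\sqcup\mathsf{C}_2$, i.e.\ that every caustic point lies in one of the three explicitly listed pieces. By Proposition~\ref{p:corank1} a point $a\in A$ lies in $\mathsf{C}$ precisely when the three polynomials $C_a, E_a, D_a$ have a common real zero $x\in\R^2$. The points with $y_1^2+y_2^2>1$ are in $\mathsf{S}$ by Proposition~\ref{sd-converse}, and those with $y_1^2+y_2^2=1$ are exactly $\mathsf{C}_1$ by Lemma~\ref{lem:C1} and the fact that on $\mathsf{C}_1$ the unique critical point $\Upsilon(a)$ has vanishing diagonal (so $D_a$ vanishes there automatically). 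Hence I may restrict attention to the open region $y_1^2+y_2^2<1$, and it remains to prove: if $y_1^2+y_2^2<1$ and $a\in\mathsf C$, then either $a=a_{\rm cusp}$ or $a\in\mathsf C_2$, i.e.\ $27y_1^4y_2^4-18y_1^2y_2^2+4y_1^2+4y_2^2=1$.

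\textbf{Key steps.} On the region $y_1^2+y_2^2<1$ the conic $C_a(x)=0$ is a genuine ellipse, so I use the rational parametrization \eqref{parametrization} by $t\in\mathbb{P}^1(\R)$; this is exactly the substitution already set up in the excerpt, turning $E_a$ and $D_a$ into the displayed degree-$6$ and degree-$5$ polynomials $E_a(t)$ and $D_a(t)$ in the single variable $t$ (with $y_1^2,y_2^2$ as parameters). A point $a$ with $y_1^2+y_2^2<1$ lies in $\mathsf C$ iff the pair $E_a(t)=D_a(t)=0$ has a common real root. The plan is therefore to eliminate $t$: form the resultant $R(y_1^2,y_2^2):=\mathrm{Res}_t\bigl(E_a(t),D_a(t)\bigr)$, which is a polynomial in $u:=y_1^2$ and $v:=y_2^2$. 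A common zero of $E_a,D_a$ forces $R(u,v)=0$; conversely one must check (by looking at leading coefficients / the possibility of a root escaping to $t=\infty$, which corresponds to a boundary point of the parametrized ellipse) that the resultant vanishing does capture the relevant real common roots, and not merely complex or spurious ones. I would then factor $R(u,v)$; the claim is that, after removing extraneous factors that do not produce \emph{real} common roots, the only surviving locus is $(27u^2v^2-18uv+4u+4v-1)$ — i.e.\ $\mathsf C_2$ together with its closure point $a_{\rm cusp}$ (where this cubic-type curve meets $u=v=1/3$). The point $a_{\rm cusp}$ is already known to be in $\mathsf C$ by Lemma~\ref{lem:C1}, and one checks directly that $(1/3,1/3)$ satisfies $27u^2v^2-18uv+4u+4v=1$, so it sits on the closure of $\mathsf C_2$; removing it from the curve is precisely the definition of $\mathsf C_2$, and the combined locus is $\mathsf C_1\sqcup\{a_{\rm cusp}\}\sqcup\mathsf C_2$ as stated.

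\textbf{Shortcut via the symmetry and the diagonal ray.} Because everything is invariant under $(y_1,y_2)\mapsto(y_2,y_1)$ (the $\mathrm{Ad}_{\mathsf w}$-equivariance noted after Theorem~\ref{prop:CRIT}), the resultant $R$ is a symmetric polynomial in $u,v$, which cuts down the bookkeeping. As a sanity check and as the source of the explicit curve equation, I would redo the computation along the diagonal ray $y_1=y_2=y$ already carried out in the excerpt: there $E_a$ factors through the line $x_1=x_2$ and the hyperbola $9y^2=2x_1^2+5x_1x_2+2x_2^2$, and the degeneracy conditions pin down $y=1/\sqrt3$ (which is $a_{\rm cusp}$) and the curve $\mathsf C_2$ meeting the diagonal, matching $27y^8-18y^4+8y^2=1$. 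This fixes the normalization of the factor of $R$ one is after.

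\textbf{Main obstacle.} The genuine difficulty is the elimination step: $\mathrm{Res}_t(E_a,D_a)$ is a resultant of a sextic against a quintic with two parameters, so it is a large polynomial in $u,v$, and one must both factor it and — more delicately — argue that the \emph{real} common-root condition is equivalent to the vanishing of the single factor $27u^2v^2-18uv+4u+4v-1$ on the physically relevant region $0<u+v<1$, $u,v>0$. Controlling the spurious factors (and the $t=\infty$ case, corresponding to the one point of the ellipse missed by the parametrization) is where care is needed; the symmetry of $R$ in $u,v$ and the explicit diagonal computation are the tools I would lean on to identify and discard the extraneous components. Once $R$ is factored correctly, the inclusion $\mathsf C\subset\mathsf C_1\sqcup\{a_{\rm cusp}\}\sqcup\mathsf C_2$ follows immediately, completing the proof of Proposition~\ref{C-inside}.
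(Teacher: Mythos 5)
Your proposal follows essentially the same route as the paper: reduce to the region $y_1^2+y_2^2<1$, pass to the rational $t$-parametrization \eqref{parametrization} of the conic $C_a=0$, and eliminate $t$ by computing the resultant $R(a)=\mathrm{Res}_t\bigl(E_a(t),D_a(t)\bigr)$, which factors as $(y_1^2+y_2^2-1)^4\,(27y_1^4y_2^4-18y_1^2y_2^2+4y_1^2+4y_2^2-1)^2$, so that its zero locus in the relevant region is exactly $\{a_{\rm cusp}\}\sqcup\mathsf{C}_2$. One remark: the ``main obstacle'' you flag (spurious factors, real versus complex roots, the $t=\infty$ case) is not actually an obstacle for this one-sided inclusion, since you only need the easy implication that a common root of $E_a,D_a$ in $\mathbb{P}^1(\R)\subset\mathbb{P}^1(\C)$ forces $R(a)=0$ --- the converse is never invoked here, and the explicit factorization has no extraneous component to discard on $y_1^2+y_2^2<1$.
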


\begin{proof}
It suffices to show that if $a$ is in $\mathsf{C}$ but not in $\mathsf{C}_1$ then $a$ is in $\{a_{\rm cusp}\}\sqcup\mathsf{C}_2$.

We see that $a\in \mathsf{C}- \mathsf{C}_1$ satisfies $y_1^2+y_2^2<1$ and moreover there is $t\in \mathbb{P}^1(\R)$ such that $E_a(t)=D_a(t)=0$. This system has a {\it complex} solution $t\in\mathbb{P}^1(\C)$ if, and only if, the resultant $R(a)={\rm Res}(E_a(t),D_a(t))$ vanishes. One computes
\[
R(a)=(y_1^2+y_2^2-1)^4 (27 y_1^4 y_2^4-18 y_1^2 y_2^2+4 y_2^2+4 y_1^2-1)^2.
\]
The set of $a\in A$ such that $y_1^2+y_2^2<1$ and $R(a)=0$ is precisely $\{a_{\rm cusp}\}\sqcup \mathsf{C}_2$.
\end{proof}

Note that we have the relation
\[
D_a(t)= \left(\frac{-2t-1}{3}\right)E_a(t)+ \left(\frac{t^2+t+1}{9}\right) E_a'(t).
\]
From this it follows that
\begin{equation}\label{EE'}
\text{\it the real solutions of } E_a(t)=D_a(t)=0\; \text{\it are precisely those of } E_a(t)=E'_a(t)=0.
\end{equation}
This latter system is slightly more convenient, since it allows us to characterize degenerate critical points in terms of the multiplicities of roots of the polynomial $E_a(t)$. Note that the discriminant of $E_a(t)$ is proportional by an absolute constant to
\[
(y_1^2+y_2^2-1)^2 (27 y_1^4 y_2^4-18 y_1^2 y_2^2+4 y_2^2+4 y_1^2-1)^2,
\]
whose zero set agrees with the expression $R(a)$ above.

\subsection{Light configuration}\label{sub:light-config}
In this section we finish the proof of the light zone configuration in Theorem \ref{prop:CRIT}.

\begin{prop}\label{light-config}
We have the following critical point configurations:
\begin{enumerate}
\item\label{L1} for any $a\in\mathsf{L}_1$ one has $|\mathscr{F}(a)|=6$;
\item\label{L2} for any $a\in\mathsf{L}_2$ one has $|\mathscr{F}(a)|=2$.
\end{enumerate}
\end{prop}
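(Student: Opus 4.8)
The plan is to prove Proposition~\ref{light-config} by counting real solutions of the polynomial system governing $\mathscr{F}(a)$, using the information about the caustic locus already established in~\S\ref{s:n3:degenerate}. Recall from~\eqref{F(a)} that $\mathscr{F}(a)=\chi_{\rm det}(a)\cap\chi_{\rm tr}(a)$, and that on each region $\mathsf{L}_1,\mathsf{L}_2$ we have $y_1^2+y_2^2<1$, so that the conic $C_a(x)=0$ is a genuine ellipse, rationally parametrized by $t\in\mathbb{P}^1(\R)$ via~\eqref{parametrization}. Under this parametrization $\mathscr{F}(a)$ is in bijection with the real roots of the sextic $E_a(t)$ (one checks the parametrization is a bijection $\mathbb{P}^1(\R)\to\{C_a=0\}$, and that the point at infinity $t=\infty$ does not solve $E_a=0$ on $\mathsf{L}_1\sqcup\mathsf{L}_2$, or is accounted for separately). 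So the claim $|\mathscr{F}(a)|=6$ on $\mathsf{L}_1$ and $|\mathscr{F}(a)|=2$ on $\mathsf{L}_2$ becomes: $E_a(t)$ has six real roots (all simple) for $a\in\mathsf{L}_1$, and exactly two simple real roots for $a\in\mathsf{L}_2$.

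The key tool is the discriminant computation already recorded at the end of~\S\ref{s:n3:degenerate}: the discriminant of $E_a(t)$ is a nonzero constant times
\[
(y_1^2+y_2^2-1)^2(27y_1^4y_2^4-18y_1^2y_2^2+4y_2^2+4y_1^2-1)^2.
\]
On $\mathsf{L}_1$ this is nonzero, as it is on $\mathsf{L}_2$ (neither factor vanishes there by definition of the two open regions); moreover on $\mathsf{L}=\mathsf{L}_1\sqcup\mathsf{L}_2$ all critical points are non-degenerate, so indeed $E_a$ has no repeated root and the number of real roots is locally constant as $a$ ranges over each connected component of $\mathsf{L}$. Thus the count of real roots of $E_a(t)$ is constant on each connected component of $\mathsf{L}_1$ and of $\mathsf{L}_2$, provided these sets are connected. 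The first sub-task is therefore to check that $\mathsf{L}_1$ and $\mathsf{L}_2$ are each connected open subsets of the quadrant $\{y_1,y_2>0\}$ (this is clear from their defining inequalities: $\mathsf{L}_1$ is a neighborhood of the origin cut out by one polynomial inequality whose zero set $\mathsf{C}_2\cup\{a_{\rm cusp}\}$ is a curve separating it from the rest; $\mathsf{L}_2$ is the annular region between $\mathsf{C}_2$ and the arc $\mathsf{C}_1$). Granting connectedness, it then suffices to evaluate the real-root count of $E_a(t)$ at one sample point in each region. I would take the diagonal samples already used in~\S\ref{sec:fine-phase}: a point on the ray $y_1=y_2=y$ with $0<y<1/\sqrt3$ for $\mathsf{L}_1$, and one with $1/\sqrt3<y<1/\sqrt2$ for $\mathsf{L}_2$, where $\chi_{\rm det}(a)$ degenerates into the line $x_1=x_2$ together with a conic $9y^2=2x_1^2+5x_1x_2+2x_2^2$, and the intersection with the circle $\chi_{\rm tr}(a)$ can be solved in closed form: the line meets the circle in $2$ points always, and the hyperbola meets the circle in $4$ additional points when $y<1/\sqrt3$ and in $0$ when $y>1/\sqrt3$, giving $6$ and $2$ respectively.

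The main obstacle I anticipate is the bookkeeping needed to rule out that real roots of $E_a(t)$ escape to, or enter from, $t=\infty$ as $a$ moves within a connected component without the discriminant vanishing. Concretely, the parametrization~\eqref{parametrization} misses the point of $\{C_a=0\}$ corresponding to $t=\infty$, and a root could in principle pass through infinity; I would handle this by working with the homogenization of $E_a$ (a binary sextic form in $(t:1)$) so that $\mathbb{P}^1(\R)$ is compact and the root count is genuinely locally constant away from the discriminant locus, and by separately checking that the point at infinity of the ellipse is never in $\chi_{\rm det}(a)$ on $\mathsf{L}$ — equivalently that the leading coefficient of $E_a(t)$, namely $y_1^2+2y_2^2+2$ (up to the normalization in~\eqref{parametrization}), is nonzero, which is automatic. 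A secondary, more delicate point is the transition across $\mathsf{C}_2$: by the proof of Proposition~\ref{p:corank1} and the discriminant formula, as $a$ crosses $\mathsf{C}_2$ from $\mathsf{L}_1$ into $\mathsf{L}_2$, exactly two simple real roots of $E_a(t)$ collide into a double root (the two fold-type degenerate critical points of item~(4) of Theorem~\ref{prop:CRIT}) and then leave the real line as a complex conjugate pair — which is consistent with the count dropping from $6$ to $2$, and with the two persisting non-degenerate points of item~(4). Once these continuity/compactness details are nailed down, the proposition follows by combining the single diagonal computation in each region with the constancy of the real-root count.
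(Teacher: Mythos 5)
Your proposal is correct and follows essentially the same strategy as the paper: reduce to counting real roots of the sextic $E_a(t)$, use the determination of the caustic (equivalently, the discriminant computed after \eqref{EE'}) to conclude local constancy of the root count on each component of the light zone, and then evaluate at a single diagonal sample point in each region. The paper uses the explicit values $(y_1,y_2)=(1/2,1/2)$ and $(\sqrt{3}/2\sqrt{2},\sqrt{3}/2\sqrt{2})$ rather than the line-plus-hyperbola factorization along the diagonal (which the paper itself carries out at the end of \S\ref{sec:fine-phase} as motivation), and does not explicitly address the possibility of roots escaping through $t=\infty$, so your added care there is a small improvement — though note a minor typo: the coefficient of $t^6$ in $E_a(t)$ is $y_1^2+y_2^2+2$, not $y_1^2+2y_2^2+2$, which is still visibly nonzero.
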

\begin{proof}
Another way to state the proposition is that for $a\in\mathsf{L}_1$ (resp., $a\in\mathsf{L}_2$) there are $6$ (resp., $2$) distinct real solutions to $E_a(t)=0$.

Note that, for $i=1,2$, it is enough to show the stated root configuration for {\it some} value of $a\in\mathsf{L}_i$. Indeed, the root configuration cannot change within $\mathsf{L}_i$, since changing to any other root configuration would require hitting the caustic $\mathsf{C}$. By Proposition \ref{C-inside}, this is impossible.

For \eqref{L1} we can, for example, use $(y_1,y_2)=(1/2,1/2)$. In this case, equation $E_a(t)=0$ becomes $10 t^6+30 t^5-3 t^4-56 t^3-21 t^2+12 t+1=0$, which has 6 distinct real roots. For \eqref{L2} we can use the point $(y_1,y_2)=(\sqrt{3}/2\sqrt{2},\sqrt{3}/2\sqrt{2})$. In this case, we obtain $(2 t^2+2 t-1) (11 t^4+22 t^3+9 t^2-2 t+5)=0$, which has two distinct real roots.\end{proof}

As a corollary, we deduce the following result.

\begin{cor}\label{cor:real}
If $a\in\{a_{\rm cusp}\}\sqcup\mathsf{C}_2$, then any solution $t$ to $E_a(t)=0$ is real. In particular, we have $\mathsf{C}=\mathsf{C}_1\sqcup\{a_{\rm cusp}\}\sqcup \mathsf{C}_2$. 
\end{cor}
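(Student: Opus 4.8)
\textbf{Proof plan for Corollary \ref{cor:real}.}

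The first assertion is that for $a \in \{a_{\rm cusp}\} \sqcup \mathsf{C}_2$ all six roots of $E_a(t)$ are real. I would prove this by a continuity/degeneration argument using the monodromy constraint already exploited in the proof of Proposition \ref{light-config}. The key point is that the discriminant of $E_a(t)$, whose zero set coincides with that of $R(a)$ (as recorded at the end of \S\ref{s:n3:degenerate}), vanishes exactly on $\mathsf{C}_1 \cup \{a_{\rm cusp}\} \cup \mathsf{C}_2$. Therefore, on the connected open region $\mathsf{L}_1$ the number of real roots is locally constant and equals $6$ by the explicit computation at $(y_1,y_2)=(1/2,1/2)$; likewise on $\mathsf{L}_2$ it equals $2$. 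Now each point $a \in \mathsf{C}_2$ lies in the closure of $\mathsf{L}_1$: indeed $\mathsf{C}_2$ is (an open arc on) the boundary curve $\{27y_1^4y_2^4 - 18 y_1^2 y_2^2 + 4y_2^2 + 4y_1^2 = 1\}$, and the defining inequality of $\mathsf{L}_1$ is exactly the strict version of this equation, so $\mathsf{L}_1$ has $\mathsf{C}_2$ in its boundary. Approaching such an $a$ along a path in $\mathsf{L}_1$, the six real roots of $E_{a'}(t)$ vary continuously; in the limit they remain real (a real root can only leave the real line by colliding with another real root, and the limit of real numbers is real). Hence $E_a(t)$ has all six roots real, necessarily with some coincidences since we are on the discriminant locus. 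The same argument applies verbatim to $a_{\rm cusp}$, which is likewise a boundary point of $\mathsf{L}_1$.

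For the second assertion $\mathsf{C} = \mathsf{C}_1 \sqcup \{a_{\rm cusp}\} \sqcup \mathsf{C}_2$: one inclusion is Proposition \ref{C-inside}, which gives $\mathsf{C} \subset \mathsf{C}_1 \sqcup \{a_{\rm cusp}\} \sqcup \mathsf{C}_2$. The reverse inclusion $\mathsf{C}_1 \cup \{a_{\rm cusp}\} \subset \mathsf{C}$ is Lemma \ref{lem:C1}, so it remains only to show $\mathsf{C}_2 \subset \mathsf{C}$. By Proposition \ref{p:corank1}, membership in $\mathsf{C}$ is equivalent to the existence of a real $x$ (equivalently, via the birational parametrization \eqref{parametrization}, a real $t$) solving $C_a(x)=E_a(x)=D_a(x)=0$, which by \eqref{EE'} is the system $E_a(t) = E_a'(t) = 0$. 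For $a \in \mathsf{C}_2$ we have $y_1^2+y_2^2<1$, so the conic $C_a$ is a genuine ellipse and the parametrization is valid; moreover $R(a)=0$ forces $E_a(t)$ and $E_a'(t)$ to share a \emph{complex} root $t_0 \in \mathbb P^1(\mathbb C)$, i.e. $E_a$ has a multiple root $t_0$. But the first part of the corollary shows all roots of $E_a$ are real, so $t_0 \in \mathbb P^1(\mathbb R)$, giving a genuine real solution of $E_a(t)=E_a'(t)=0$. Hence $a \in \mathsf{C}$.

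The main obstacle is the passage from a complex multiple root to a real one — i.e. ruling out that the resultant $R(a)$ vanishes on $\mathsf{C}_2$ only because of a pair of non-real complex-conjugate roots colliding. This is precisely what the first part of the corollary rules out, and the crux there is the topological observation that $\mathsf{C}_2$ and $a_{\rm cusp}$ lie on the boundary of the region $\mathsf{L}_1$ where we have an explicit six-real-root witness, together with the fact (already established via the discriminant computation) that no wall separating $\mathsf{L}_1$ from $\mathsf{C}_2$ is crossed along a short approaching path. Once this is in place, everything else is bookkeeping with the already-proven Propositions \ref{p:corank1}, \ref{light-config} and \ref{C-inside} and Lemma \ref{lem:C1}. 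One minor point to check carefully is the geometry of Figure \ref{beautiful pic}: that $\mathsf{C}_2$ genuinely borders $\mathsf{L}_1$ rather than lying only against $\mathsf{L}_2$; this follows by comparing the strict and non-strict forms of the quartic inequality defining $\mathsf{L}_1$ and $\mathsf{C}_2$, since points of $\mathsf{C}_2$ with $y_1^2+y_2^2$ slightly decreased satisfy the $\mathsf{L}_1$ inequality.
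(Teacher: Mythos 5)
Your proof is correct and takes essentially the same route as the paper's. The paper argues the first assertion by contradiction (assume a pair of non-real conjugate roots persists in a neighborhood of $a$; that neighborhood meets $\mathsf{L}_1$, contradicting Proposition~\ref{light-config}), while you argue directly (approach $a$ along a path in $\mathsf{L}_1$ and pass to the limit of six real roots); these are two phrasings of the same continuity argument, both resting on the explicit six-real-root witness in $\mathsf{L}_1$ and the observation that $\mathsf{C}_2\sqcup\{a_{\rm cusp}\}$ lies on the boundary of $\mathsf{L}_1$. Your second paragraph is exactly the chain of inclusions the paper intends, merely spelled out more carefully than the paper's one-line ``follows from the proof of Proposition~\ref{C-inside}.''
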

\begin{proof}
Suppose that for some $a\in\{a_{\rm cusp}\}\sqcup\mathsf{C}_2$ there is a pair of non-real, complex conjugate roots of $E_a(t)=0$. Then there exists a neighborhood $U$ of $a$ such that the same is true for every $a'\in U$. But this neighborhood necessarily intersects $\mathsf{L}_1$, where Proposition \ref{light-config} assures us that there are no complex roots. Contradiction.

The second statement follows from the proof of Proposition \ref{C-inside}.
\end{proof}

We note that $|\mathscr{F}(a)|$ is even for $a\in \mathsf{L}_1 \sqcup \mathsf{L}_2$. This is explained by the involution $(x_1,x_2)\to (-x_1,-x_2)$ which preserves $\chi_{\rm det}(a)$ and $\chi_{\rm tr}(a)$ above, and thus acts on the fibers $\mathscr{F}(a)$ for any $a\in A$. The only fixed points of the involution are $x_1=x_2=0$ which project to the caustic $\mathsf{C}_1$. In fact we shall see below that $|\mathscr{F}(a)|=1$ for every $a\in \mathsf{C}_1$ which is the only case where the multiplicity is odd.

\subsection{Degeneracy types} 
Having obtained the caustic locus in Corollary \ref{cor:real}, we now look at the fibers $\mathscr{F}(a)$ over caustic points. We first determine their multiplicities, which will be of great help in identifying their degeneracy type.

\begin{prop}\label{p:caustic-multiplicity}
We have the following critical point configurations:
\begin{enumerate}
\item\label{C1} for any $a\in\mathsf{C}_1$ one has $|\mathscr{F}(a)|=1$, of multiplicity 2;
\item\label{C2} one has $|\mathscr{F}(a_{\rm cusp})|=2$, each of multiplicity 3;
\item\label{C3} for any $a\in\mathsf{C}_2$ one has $|\mathscr{F}(a)|=4$, two of multiplicity 2, two of multiplicity 1.
\end{enumerate}
\end{prop}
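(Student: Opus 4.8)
The plan is to reduce Proposition~\ref{p:caustic-multiplicity} to a statement about the multiplicities of the roots of the single polynomial $E_a(t)$, and then to analyze the three pieces $\mathsf{C}_1$, $\{a_{\rm cusp}\}$, $\mathsf{C}_2$ of the caustic by short plane‑curve computations together with one continuity argument.

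First I would set up the dictionary. By Theorem~\ref{t:critical} the fiber $\mathscr{F}(a)$ carries a natural scheme structure — that of $\Ad_K(H)\cap\mathcal{J}_+\cap\mathscr{A}(a)$, which in the coordinates \eqref{A(a)} is $\mathrm{Spec}\,\R[x_1,x_2]/(C_a,E_a)$ — and by \eqref{ker-Hess}, together with the fact that the fiber‑critical set of $F_H$ is cut out by the Jacobian ideal of the phase function, the length of this scheme at a point equals the multiplicity (Milnor number, in the sense of \S\ref{num-inv}) of the corresponding critical point. When $y_1^2+y_2^2<1$ the conic $\chi_{\rm tr}(a)=\{C_a=0\}$ is smooth and the substitution \eqref{parametrization} is an isomorphism $\mathbb{P}^1_t\xrightarrow{\ \sim\ }\{C_a=0\}$, under which this scheme becomes $\mathrm{Spec}\,\R[t]/(E_a(t))$; hence for such $a$ the multiplicity of a point of $\mathscr{F}(a)$ is the multiplicity of the corresponding root of $E_a(t)$, and $|\mathscr{F}(a)|$ is the number of distinct real roots (all roots being real on $\mathsf{C}_2\sqcup\{a_{\rm cusp}\}$ by Corollary~\ref{cor:real}). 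The locus $\mathsf{C}_1=\{y_1^2+y_2^2=1\}$ must be treated by hand, since there \eqref{parametrization} degenerates.

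For $a\in\mathsf{C}_1$: the quadratic form $C_a=x_1^2+x_1x_2+x_2^2$ is positive definite, so its real zero set is $\{(0,0)\}$, which lies on $\{E_a=0\}$ since $E_a$ has no constant term; thus $|\mathscr{F}(a)|=1$, matching Proposition~\ref{sd-converse} (the point is $\Upsilon(a)$). The linear part of $E_a$ at the origin is a nonzero \emph{real} linear form, hence not proportional to either of the two complex‑conjugate linear factors of $C_a$, so $E_a$ meets each of those two complex lines transversally at the origin and $I_{(0,0)}(C_a,E_a)=1+1=2$; the unique point of $\mathscr{F}(a)$ therefore has multiplicity $2$. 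For $a=a_{\rm cusp}$ (so $y_1=y_2=1/\sqrt3$): one computes $E_a=(x_1-x_2)\,\ell(x)$ with $\ell=2x_1^2+5x_1x_2+2x_2^2-3$ irreducible, so $\chi_{\rm det}(a_{\rm cusp})$ is a line plus a conic. A short computation shows $\chi_{\rm tr}(a_{\rm cusp})$ meets the line $\{x_1=x_2\}$ at the two points $\pm(1/\sqrt3,1/\sqrt3)$ (multiplicity $1$ each) and meets $\{\ell=0\}$ at exactly those same two points; since two conics intersect in $4$ points and the involution $(x_1,x_2)\mapsto(-x_1,-x_2)$ preserves both curves and swaps the two points, each carries intersection multiplicity $2$. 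Hence $\chi_{\rm tr}$ meets $\chi_{\rm det}$ with multiplicity $1+2=3$ at each, so $|\mathscr{F}(a_{\rm cusp})|=2$ with both points of multiplicity $3$ (and the B\'ezout total is $2\cdot 3=6$); these two points are $\splus$ and $\sminus$ of Lemma~\ref{lem:C1}.

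For $a\in\mathsf{C}_2$ I would argue by continuity. Writing $Q(y_1,y_2)=27y_1^4y_2^4-18y_1^2y_2^2+4y_1^2+4y_2^2$, one has $\mathsf{L}_1=\{Q<1\}$, $\mathsf{L}_2=\{Q>1\}\cap\{y_1^2+y_2^2<1\}$, and $\mathsf{C}_2\subset\{Q=1\}\cap\{y_1^2+y_2^2<1\}$ with $a_{\rm cusp}$ removed (the inclusion $\mathsf{C}_2\subset\{y_1^2+y_2^2<1\}$ following from the partition in Corollary~\ref{cor:real}). Near $a_0\in\mathsf{C}_2$ the level curve $\{Q=1\}$ is smooth — one checks its only singular point in $\{y_1,y_2>0\}$ is $a_{\rm cusp}$ — and separates $\mathsf{L}_1$ from $\mathsf{L}_2$; by Proposition~\ref{light-config}, $E_a(t)$ has $6$ distinct real roots on the $\mathsf{L}_1$ side and $2$ on the $\mathsf{L}_2$ side. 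Since a real root of a real polynomial can leave the real line only by first colliding with another real root, crossing $\mathsf{C}_2$ forces the four vanishing roots to coalesce in pairs into two double roots while the other two stay simple; because all six roots of $E_{a_0}(t)$ are real (Corollary~\ref{cor:real}), this gives configuration $(2,2,1,1)$ at the generic point of $\mathsf{C}_2$, hence $|\mathscr{F}(a_0)|=4$ with two points of multiplicity $2$ and two of multiplicity $1$. The remaining configurations compatible with Corollary~\ref{cor:real} and this root count — $(2,2,2)$, $(3,2,1)$, $(3,3)$, $(4,1,1)$ — each impose $\ge 2$ conditions on $(y_1,y_2)$ and so occur at only finitely many points; a direct elimination from $E_a=E_a'=E_a''=0$ shows the only such point in $\{y_1,y_2>0,\ y_1^2+y_2^2<1\}$ is $a_{\rm cusp}$, which is excluded from $\mathsf{C}_2$. (Alternatively one may establish the identity $E_a(t)=c(y)\,p_a(t)^2q_a(t)$ with coprime quadratics $p_a,q_a$ valid whenever $Q(y)=1$, checking it at an explicit point such as $(y_1,y_2)=(1/2,\,2\sqrt2/3\sqrt3)$ and propagating by connectedness of $\mathsf{C}_2$ modulo $y_1\leftrightarrow y_2$.)

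I expect the main obstacle to be the dictionary step: carefully identifying the scheme‑theoretic fiber of $\Lambda_H^{\rm red}\to A$ coming from the Cartesian diagram~\eqref{commutative} with the Milnor number of the Whittaker phase function via \eqref{ker-Hess}, and then with the root multiplicity of $E_a(t)$ via the parametrization, while correctly isolating the genuinely different behavior on $\mathsf{C}_1$ where the conic degenerates. The region‑by‑region intersection computations are short; the only other point needing care is the elimination (or explicit factorization) pinning the configuration on \emph{all} of $\mathsf{C}_2$, and the mild verification that $\{Q=1\}$ is smooth away from $a_{\rm cusp}$.
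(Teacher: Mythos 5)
Your overall strategy is essentially the paper's: translate the multiplicity of a point of $\mathscr{F}(a)$ into the multiplicity of the corresponding root of $E_a(t)$ (via \eqref{parametrization} and \eqref{EE'}), then handle $\mathsf{C}_1$, $a_{\rm cusp}$, and $\mathsf{C}_2$ by separate plane-curve arguments plus a continuity step. In fact your treatment of $\mathsf{C}_1$ is \emph{more} complete than the paper's: the paper cites Proposition~\ref{sd-converse}, which only establishes $|\mathscr{F}(a)|=1$; your factorization of the definite conic $C_a$ over $\C$ into conjugate lines, each meeting the cubic $\{E_a=0\}$ transversally at the origin, is what actually gives intersection multiplicity $1+1=2$. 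Your B\'ezout argument at $a_{\rm cusp}$ is an equivalent alternative to the paper's one-line observation $E_{a_{\rm cusp}}(t)=\text{const}\cdot(2t^2+2t-1)^3$.

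The gap is in $\mathsf{C}_2$. After the continuity step you list the competing configurations $(2,2,2)$, $(3,2,1)$, $(3,3)$, $(4,1,1)$ and propose to eliminate them all via the system $E_a=E_a'=E_a''=0$. That system detects only a root of multiplicity $\geq 3$; it rules out $(3,2,1)$, $(3,3)$, $(4,1,1)$, but is blind to $(2,2,2)$, which has no triple root. The accompanying dimension count (``$\geq 2$ conditions, hence finitely many points'') is also left unjustified. The paper closes this immediately with the involution $\sigma(t)=(t+2)/(-2t-1)$, which you already use at $a_{\rm cusp}$: since $\sigma$ preserves $\{E_a=E_a'=0\}$, is fixed-point-free for $a\notin\mathsf{C}_1$, and sends roots to roots of the same multiplicity, the configuration must be a $\sigma$-pairing, leaving only $(2,2,1,1)$ or $(3,3)$ — $(2,2,2)$, $(3,2,1)$, $(4,1,1)$ have an odd number of distinct roots or unequal multiplicities within a putative pair and so are excluded outright. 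The paper then rules out $(3,3)$ via the principal subresultant ${\rm PSPC}_3(E_a,E_a')$; your $E_a''=0$ elimination plays the same role and is fine for that final step, but you need the $\sigma$-symmetry argument first to dispose of $(2,2,2)$.
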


\medskip

\noindent {\it Proof of \eqref{C1}}: We have already proved \eqref{C1} in Proposition \ref{sd-converse}.

\bigskip

\noindent {\it Proof of \eqref{C2}}: By \eqref{EE'} we must show that $E_{a_{\rm cusp}}(t)=0$ admits two distinct real roots, each of multiplicity 3. Inserting $(y_1,y_2)=(1/\sqrt{3},1/\sqrt{3})$ into the formula for $E_a(t)=0$ we obtain $(2t^2+2t-1)^3=0$.

\bigskip

\noindent {\it Proof of \eqref{C3}}: Let $a\in\mathsf{C}_2$. By \eqref{EE'} we must show that $E_a(t)=0$ admits four distinct real roots, of which two are double and two are simple. 

We will make use of the symmetry of the solution locus $C_a(x)=E_a(x)=D_a(x)=0$ given by $x\mapsto -x$. In the parametrization \eqref{parametrization}, this corresponds to $\sigma(t)= (t+2)/(-2t-1)$. We deduce that if $t\in\mathbb{P}^1$ satisfies $E_a(t)=D_a(t)=0$, then so does $\sigma (t)$. We deduce from \eqref{EE'} that the system $E_a(t)=E_a'(t)=0$ is also invariant under $\sigma$. In other words, $\sigma$ sends roots of $E_a(t)$ to roots of $E_a(t)$, and conserves their multiplicities.

By Corollary \ref{cor:real}, $E_a(t)$ admits 6 real roots, when counted with multiplicity. Since $\mathsf{C}_2\subset\mathsf{C}$, one of these roots must have multiplicity strictly greater than $1$. Since $a\notin\mathsf{C}_1$, any solution $x$ to $E_a(x)=0$ is non-zero, so that the map $x\mapsto -x$, and hence $\sigma$, has no fixed points. From these observations we deduce that either two roots are of multiplicity 2 and the others are non-degenerate (as is stated in the proposition) or that there are $2$ distinct real solutions, each with multiplicity $3$. We must show that for $a\in\mathsf{C}_2$ the latter cannot occur.

Recall from \cite{Coste} the notion of the principal subresultant coefficients ${\rm PSPC}_\ell (P,Q)$. These can be used to characterize the exact number of
roots a given polynomial has. For example, a degree $6$ polynomial $P$ has exactly $2$ distinct complex roots if, and only if, ${\rm PSPC}_4 (P,P')\neq 0$ and ${\rm
PSPC}_\ell (P,P')=0$ for $\ell=0,1,2,3$. If we show that the vanishing locus of ${\rm PSPC}_3(E_a,E_a')$ does not intersect $\mathsf{C}_2$, then this effectively
eliminates this root configuration. 

Recall that $a\in\mathsf{C}_2$ satisfies $F(x,y)=0$, where $F(x,y)=27 x^2y^2-18xy+4x+4y-1$ in the variables $x=y_1^2$, $y=y_2^2$. We furthermore compute ${\rm PSPC}_3(E_a,E_a')=G(x,y)$, where
\begin{align*}
G(x,y)=80(x+y)&-50(x^2+y^2)+7(x^3+y^3)-51(x^2y+xy^2)\\
&+57(x^3y+xy^3)+249x^2y^2-166xy-25.
\end{align*}
The resultant with respect to $y$ of $F$ and $G$ is
\[
27(y+1)^4 (3y-1)^2 (3y^2-3y+1) (1323y^4 - 1809y^3 + 1602y^2 - 765y + 133).
\]
The only real solution in the positive reals is $y=1/3$. We conclude by recalling that $a\in\mathsf{C}_2$, by definition, is distinct from $(1/\sqrt{3},1/\sqrt{3})$.
\qed

\bigskip

We may now determine the degeneracy type of each of the degenerate singularities lying over a caustic point.  

\begin{cor}\label{p:degeneracy} 
We have the following description of the degeneracy types in the critical locus:
  \begin{enumerate} 
	\item\label{D1} For any $a\in \mathsf{C_1}$, the unique critical point of $\mathscr{F}(a)$ is degenerate of type $A_2$.

	\item\label{D2} The two distinct critical points of $\mathscr{F}(a_{\rm cusp})$ are degenerate of type $A_3$.
  
	\item\label{D3} For any $a\in \mathsf{C}_2$, the two degenerate critical points of $\mathscr{F}(a)$ are of type $A_2$.
   \end{enumerate}
	\end{cor}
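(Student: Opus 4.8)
The plan is to reduce the corollary entirely to the structure theory of simple singularities recalled in \S\ref{num-inv}, feeding it the two facts already established just above: Proposition~\ref{p:corank1}, which says that \emph{every} degenerate critical point of the Whittaker phase function $F_H$ (equivalently, every singular point of the Lagrangian mapping $\Lambda_H^{\rm red}\to A$) is of corank $1$; and Proposition~\ref{p:caustic-multiplicity}, which computes its multiplicity. Recall that a Lagrangian singularity of corank $1$ and finite multiplicity is automatically a Morin singularity, i.e.\ of type $A_k$, and that the subscript $k$ equals the multiplicity. So, modulo reconciling the notion of ``multiplicity'' used in Proposition~\ref{p:caustic-multiplicity} (there phrased in terms of root multiplicities of the one-variable polynomial $E_a(t)$) with the intrinsic Milnor number $\mu$ of the critical point, the corollary will follow by simply reading off $\mu=2,3,2$ on $\mathsf{C}_1$, at $a_{\rm cusp}$, and on the degenerate locus of $\mathsf{C}_2$ respectively.

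The one point that needs care is this \emph{bridge} between the two incarnations of the multiplicity, and I would set it up as follows. Near a degenerate critical point $(u_0,a)$ of $u\mapsto F_H(u,a)$, Proposition~\ref{p:corank1} together with the isomorphism~\eqref{ker-Hess} shows that the fiber Hessian $\nabla^2_u F_H$ has one-dimensional kernel; applying the splitting lemma in the $u$-variable (with $a$ fixed) I would write $u\mapsto F_H(u,a)$, near $u_0$, as a nondegenerate quadratic form in $\dim U-1$ coordinates plus a function $g(t)$ of a single coordinate, so that $\mu$ equals the (finite, since the critical point is isolated along the fiber) order of vanishing of $g'$ at $t_0$. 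On the other hand, by Theorem~\ref{t:critical} the fiber $\mathscr{F}(a)$ is the scheme-theoretic intersection $\chi_{\rm det}(a)\cap\chi_{\rm tr}(a)$ inside the affine plane $\mathscr{A}(a)$ of~\eqref{F(a)}; its length at the corresponding point $s_0$ is the local intersection multiplicity of the cubic $\chi_{\rm det}(a)$ with the conic $\chi_{\rm tr}(a)$, and this agrees with the order of vanishing of $g'$, hence with $\mu$. Finally, when $y_1^2+y_2^2<1$ the parametrization~\eqref{parametrization} is a local isomorphism near $s_0$, so this intersection multiplicity equals the multiplicity of $t_0$ as a root of $E_a(t)$ — equivalently, via~\eqref{EE'}, the common order of vanishing of $E_a$ and $E_a'$ there. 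Thus $\mu$ is exactly the root multiplicity tabulated in Proposition~\ref{p:caustic-multiplicity}.

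With the bridge in hand the three cases are immediate. For $a=a_{\rm cusp}$, Proposition~\ref{p:caustic-multiplicity}\eqref{C2} gives two critical points each of multiplicity $3$; corank $1$ then forces type $A_3$, proving~\eqref{D2}. For $a\in\mathsf{C}_2$, Proposition~\ref{p:caustic-multiplicity}\eqref{C3} gives two degenerate critical points of multiplicity $2$ (the other two being nondegenerate), hence of type $A_2$, proving~\eqref{D3}. For $a\in\mathsf{C}_1$ the parametrization~\eqref{parametrization} degenerates, so here I would read off $\mu=2$ directly rather than through $E_a(t)$: Proposition~\ref{sd-converse} realizes the unique critical point as the coalescence of precisely the two antipodal points present in the adjacent region $\mathsf{L}_2$, so $\mu\geq 2$, while $\mathsf{C}_1$ is a codimension-one stratum of the caustic, which for a corank-one Lagrangian mapping forces the fold value $\mu=2$; hence type $A_2$, proving~\eqref{D1}. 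Combined with Propositions~\ref{sd-converse} and~\ref{light-config} and Corollary~\ref{cor:real}, this also finishes the proof of Theorem~\ref{prop:CRIT}.

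The step I expect to be the main obstacle is the bridge itself, specifically the assertion that the scheme-theoretic length of $\mathscr{F}(a)=\chi_{\rm det}(a)\cap\chi_{\rm tr}(a)$ at a point equals the Milnor number of the corresponding critical point of $F_H$: one must be careful that $\mathscr{F}(a)$ is taken with the natural scheme structure coming from the two equations $\det(s)=0$ and $\Tr(s^2)=2$ of Theorem~\ref{t:critical}, and that~\eqref{parametrization} genuinely is a local isomorphism near the point of interest so that it does not perturb the multiplicity. The boundary stratum $\mathsf{C}_1$, where the conic $\chi_{\rm tr}(a)$ collapses to a point and~\eqref{parametrization} breaks down entirely, has to be handled by the separate coalescence/codimension argument indicated above rather than by the polynomial $E_a(t)$.
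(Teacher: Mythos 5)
Your proof is correct and takes the same core route as the paper: corank $1$ from Proposition~\ref{p:corank1}, multiplicity from Proposition~\ref{p:caustic-multiplicity}, and the classification facts from \S\ref{num-inv} (a corank-$1$ critical point of finite multiplicity is of type $A_k$ with $k$ its Milnor number). The ``bridge'' you add --- identifying the root multiplicity of $E_a(t)$ with the local intersection multiplicity of $\chi_{\rm det}(a)\cap\chi_{\rm tr}(a)$ and then, via the splitting lemma and \eqref{ker-Hess}, with the Milnor number of the fiber critical point of $F_H$ --- is a worthwhile explicit justification of an identification the paper makes tacitly rather than a genuinely different argument; the only minor overreach is the appeal, in case~\eqref{D1}, to a generic codimension count to force $\mu=2$ (genericity of the Lagrangian mapping $\Lambda_H^{\rm red}\to A$ has not been established), where it is cleaner to observe that $\mu\le 2$ already follows from the $2$-to-$1$ coalescence of the fibers $\mathscr{F}(a')$ as $a'\in\mathsf{L}_2$ approaches $\mathsf{C}_1$ together with properness of the mapping, which with the coalescence lower bound $\mu\ge 2$ and corank $1$ gives $A_2$.
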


	\begin{proof}[Proof of \eqref{D1}]
In Proposition~\ref{p:caustic-multiplicity} it was shown that the multiplicity is $2$. This is enough to pinpoint $A_2$ as the degeneracy type, since a singularity of type $A_k$ has multiplicity $k$. \end{proof}

\begin{proof}[Proof of \eqref{D2}]
In Lemma~\ref{lem:C1}, we found the two critical points $u_{\rm cusp}^\pm$ and showed in Proposition~\ref{p:corank1} that the corresponding Hessians $\nabla^2 F_H(u_{\rm cusp}^\pm, a_{\rm cusp})$ are both of corank $1$. It follows that $u_{\rm cusp}^\pm$ are of degeneracy type $A$. By Proposition~\ref{p:caustic-multiplicity} the multiplicity of both $u_{\rm cusp}^\pm$ is $3$. Hence the critical points are of type $A_3$.
\end{proof}

\begin{proof}[Proof of \eqref{D3}] If $a\in \mathsf{C_2}$, then according to Proposition~\ref{p:caustic-multiplicity} among the four distinct critical points of $\mathscr{F}(a)$ two are non-degenerate and two are degenerate of multiplicity $2$. Since $A_2$ is the unique singularity class with multiplicity $2$, we deduce that the two degenerate critical points in $\mathscr{F}(a)$ are fold singularities.
\end{proof}

\begin{remark}
In the proof of~\eqref{D2} above we could bypass the use of Proposition~\ref{p:corank1} and only use the fact that the multiplicity of the singularity is $3$. Indeed this implies that the singularity is simple~\cite[Lemma~4.2]{A2}, and the classification theorem of Arnol'd then shows that it is of type $A_3$. 
\end{remark}

\subsection{Proof of Theorem ~\ref{cor:Pearcey}, conditional on Hypothesis~\ref{hypothesis}}

We derive from the critical point 
configuration described in Theorem \ref{prop:CRIT} a 
lower bound on the $\PGL_3(\R)$ Jacquet-Whittaker functions $W_\nu(a)$ in the vicinity of 
the cuspidal point $a_{\rm cusp}=\Mdiag(\frac13,\frac{1}{\sqrt{3}},1)$. Namely, we shall show the existence of a constant $C>0$ and a neighborhood $V$ of $a_{\rm cusp}$, such that the 
following property holds: for all $t>1$ there is $a\in V$ such that $|W_{t\nu_0}(ta)|\geq 
Ct^{3/4}$. The approach described here depends on the verification of Hypothesis~\ref{hypothesis}; we give an unconditional proof in the next section. 

Denote by $u_{\rm cusp}^\pm$ the two singular points of $u\mapsto \varphi(u,a_{\rm 
cusp})$ given in part \eqref{D2} of Corollary \ref{p:degeneracy}. We analyse the oscillatory integrals
\begin{equation*}
I_\pm(t,a):=(2\pi)^{-\frac32}\int_U  \delta (\mathsf{w}u)^{1/2} 
\alpha_\pm(u)e^{it\varphi(u,a)}du,
\end{equation*} 
where $\varphi(u,a)=B(H,H({\sf w}u))-\ell_1(aua^{-1})$, and $\alpha_+(u)$ and 
$\alpha_-(u)$ are smooth 
functions of disjoint compact support which are identically 1 in a neighborhood of 
$u_{\rm cusp}^+$ and $u_{\rm cusp}^-$, respectively.
Recall from 
\S\ref{8not} that 
$H={\rm diag}(1,0,-1)$. 

Let $Q^\pm$ denote the Hessian of $\varphi$ at $(u_{\rm cusp}^\pm,a_{\rm cusp})$; if $(p^
\pm,q^\pm)$ is its signature, write $\sigma^\pm=p^\pm-q^\pm$.

Shrinking the support of $\alpha_\pm$ if necessary, Lemma \ref{degen F2} below provides 
for non-zero functions $r_1^\pm,r_2^\pm,\alpha^\pm_j,s_\pm\in C^\infty_c(A)$, with 
$r_i^\pm(a_{\rm 
cusp})=0$ and $s_\pm(a_{\rm cusp})=\varphi(u_{\rm cusp}^\pm,a_{\rm cusp})$, such that
\[
I_\pm (t,a)=e^{i\pi\sigma^\pm/4} e^{its_\pm(a)}t^{-\frac{5}{4}}\sum_{j=0,1,2}
\alpha_j^\pm(a)\mathsf{Pe}_j\left(t^{3/4}r_1^\pm(a),t^{1/2}r_2^\pm(a)\right)
t^{-\frac{j}{4}}+O(t^{-9/4}),
\]
uniformly for all $a$ in a sufficiently small compact $V$ about $a_{\rm cusp}$ and all 
$t>1$. 
If $a$ is sufficiently close to $a_{\rm cusp}$, then we have
\[
I_\pm (t,a)=
e^{i\pi\sigma^\pm/4} 
e^{its_\pm(a)}
\alpha_0^\pm(a)\mathsf{Pe}_0\left(t^{3/4}r_1^\pm(a),t^{1/2}r_2^\pm(a)\right)
t^{-5/4}+O(t^{-3/2}).
\]
We note for later use that, using Lemma \ref{degen F2}, and the definition of 
$\alpha_\pm$, we have
\begin{equation}\label{alpha0}
\alpha_0^\pm(a_{\rm cusp})>0.
\end{equation} 
From \eqref{def-Pearcey} and \eqref{alpha0} it follows that 
$\mathsf{Pe}_0\left(t^{3/4}r_1^\pm(a),t^{1/2}r_2^\pm(a)\right)$ and $\alpha_0^\pm (a)$ 
are non-vanishing for $a$ sufficiently close to $a_{\rm cusp}$.

We shall show that there is $C>0$ such that for every $t>1$ there is $a\in V$ 
satisfying
\begin{equation}\label{eq:cusp-lower-bd}
|I_+(t,a)+I_-(t,a)|\geq C t^{-5/4}.
\end{equation}
Let $P(a,t):=e^{i\pi(\sigma^+ -\sigma^-)} \frac{\alpha_0^+(a)}{\alpha_0^-(a)} 
\frac{\mathsf{Pe}_0\left(t^{3/4}r_1^+(a),t^{1/2}r_2^+(a)\right)}
{\mathsf{Pe}_0\left(t^{3/4}r_1^-(a),t^{1/2}r_2^-(a)\right)}$. 
For $a$ in a ball of radius $O(t^{-\frac34})$ about $a_{\rm cusp}$,
\[
|I_+(a,t)+I_-(a,t)|\asymp |1+e^{it S(a)}P(a,t)|t^{-5/4}+O(t^{-3/2}),
\]
where $S(a):=s_+(a)-s_-(a)$. In order to prove \eqref{eq:cusp-lower-bd} it suffices to show that for every $t>1$, 
 there exists $a$ close to $a_{\rm cusp}$ such that 
\begin{equation}\label{eq:lower-phase-osc}
|1+e^{it S(a)}P(a,t)|\geq 1.
\end{equation} 

Write $\theta(a,t)$ for the complex argument of $P(a,t)$, and let $\theta:=\theta(a_{\rm 
cusp},t)$ which is independent of $t$. 
It suffices to show that for every $t\gg 1$, there exists $a$ close to $a_{\rm cusp}$ 
such that
\[
\theta(a,t)+tS(a)\in [-\pi/2,\pi/2]\;\text{ mod } 2\pi.
\]
Indeed this implies that $|1+e^{i(\theta(a,t) + t S(a))} \rho|\geq 1$, for any $\rho\ge 
0$, as desired. 
If $a$ is sufficiently close to $a_{\rm cusp}$, then $|\theta(a,t)-\theta|\le 
\frac{\pi}{4}$. Thus to establish~ \eqref{eq:lower-phase-osc} it suffices to show 
that for every $t\gg 1$, there exists $a$ close to $a_{\rm cusp}$ such that
\begin{equation}\label{tSa}
tS(a)\in [-\theta-\pi/4,-\theta+\pi/4]\;\text{ mod } 2\pi.
\end{equation}

We use the fact that the differential of $S$ is non-zero:
\[
\nabla S(a_{\rm cusp})=\nabla s_+(a_{\rm cusp})-\nabla s_-(a_{\rm cusp})=\nabla 
\varphi(u_{\rm cusp}^+,a_{\rm cusp})-\nabla \varphi(u_{\rm cusp}^-,a_{\rm cusp})\neq 0,
\]
which is a consequence of Theorem~\ref{prop:CRIT} which says that $|\mathscr{F}(a_{\rm 
cusp})|=2$. 
Thus there exists some $X\in\mathfrak{a}$ for which the directional derivative $\nabla_X 
S(a_{\rm cusp})$ is non-zero. The image of 
the ball of radius $O(t^{-\frac{3}{4}})$ about 
$a_{\rm cusp}$ under 
 $a\mapsto S(a)$ contains an interval of length $\gg t^{-\frac34}$ around $S(a_{\rm 
 cusp})$. Thus 
the image of $tS(a)$ contains an interval of length $\gg t^{\frac14}$. This establishes 
\eqref{tSa}.

Recalling \eqref{eq:cusp-lower-bd} we obtain $\left|I(t,a)\right|\gg t^{-5/4}$
for some $a\in V$. Since $a=\Mdiag(y_1y_2,y_2,1)$, and 
$ta=\Mdiag(t^2y_1y_2,ty_2,1)$
 thus
 $\delta(a)^{1/2}=t^2y_1y_2$, the formula \eqref{W-form2} reads
\[
|W_{t\nu_0}(ta)|=t^2y_1y_2\left|I(t,a)\right|.
\]
Since we can choose $V$ such that for all $a\in V$ we have $|y_1y_2 - \frac{1}{3}|$ 
arbitrarily small, for example bounded by $\frac16$, the claimed lower bound follows.

\begin{remark}\label{rem-S-cusp}
We indicate how to find the difference of critical values $S(a_{\rm cusp})$, which is 
useful for obtaining uniform asymptotics 
in the transition region, as in Theorem \ref{thm:large-values-n=3} below.
If $u=\left(\begin{smallmatrix}1 & u_1 & u_3\\ 0 & 1 & u_2\\ 0 & 0 & 
1\end{smallmatrix}\right)$, $u_4:=u_1u_2-u_3$, and $a={\rm diag}(y_1y_2,y_2,1)$ then
\[
(2\pi)^{-1}\varphi(u,a)=-y_1u_1-y_2u_2-\frac{1}{2}\log (1+u_1^2+u_4^2)-\frac{1}{2}\log 
(1+u_2^2+u_3^2).
\]
Then the critical set $\Sigma$ is defined by the system
\[
y_1= -\frac{u_1+u_2u_4}{1+u_1^2+u_4^2},\quad y_2=-\frac{u_2+u_1u_3}{1+u_2^2+u_3^2},\quad 
\frac{u_3}{1+u_2^2+u_3^2}= \frac{u_4}{1+u_1^2+u_4^2}.
\]
Recall that $a_{\rm cusp}$ corresponds to $y_1=y_2=1/\sqrt{3}$. One then verifies that  
\[
u_{\rm cusp}^\pm=\left(\begin{smallmatrix} 1 & \mp 1-\sqrt{3} & 2\pm\sqrt{3}\\ 0 & 1 & 
\mp 1-\sqrt{3}\\ 0 & 0 & 1\end{smallmatrix}\right),
\]
are the two critical points, i.e. 
$(u_{\rm cusp}^\pm,a_{\rm cusp})$ lie in $\Sigma$. A direct computation then shows that
\[
(2\pi )^{-1}S(a_{\rm cusp})=(2\pi)^{-1}s^+(a_{\rm cusp})
-
(2\pi)^{-1}s^-(a_{\rm cusp})
=
\frac{4}{\sqrt{3}} - 2 \log(2+\sqrt{3}).
\]
\end{remark}

\section{Proof of Theorem~\ref{cor:Pearcey}}\label{LOWER-GL3}
We continue to assume that $\nu$ is self-dual and retain the notation 
from \S\ref{8not}. Recall that regular self-dual spectral parameters for $\GL_3$ are 
given by $t\nu_0$ for 
$t>0$ and $\nu_0=2\pi i(\varpi_1+\varpi_2)$, and
the cuspidal point $a_{\rm cusp}=\Mdiag(\frac13,\frac{1}{\sqrt{3}},1)$.
The following theorem provides a lower
bound for $W_{t\nu_0}(ta)$ for $a$ close to $a_{\rm cusp}$. Since $\|W_\nu\|_2=1$, this 
will complete the proof of Theorem~\ref{cor:Pearcey}. 

\begin{thm}\label{n=3 uniform statement} 
There exist a constant $C>0$ and a neighborhood $V$ of $a_{\rm cusp}$, such that the 
following property holds: for all $t>1$ there is $a\in V$ such that $|W_{t\nu_0}(ta)|\geq 
Ct^{3/4}$.
\end{thm}

\subsection{Asymptotics associated to cuspidal singularities}\label{sec:asymp-cusp}
We give here a more precise description of $A_k$ singularities than that given in 
\S\ref{num-inv}. 
\begin{defn}[\cite{AV}, Part  II, \S 11.1]\label{A2 def} Let $x_c$ be 
a critical point of a function $\varphi\in C^\infty(\R^m,\R)$ and $Q$ be the 
Hessian quadratic form. We say that $x_c$ is an $A_k$ 
singularity, $k\ge 2$, if
\begin{enumerate}
\item\label{A1} $Q$ has corank $1$, and
\item\label{A2} in a neighborhood of $x_c$, the function $x\mapsto \varphi(x)$ is 
equivalent (via a germ of diffeomorphism of $\C^m$) to
\[
x_1^2+x_2^2+\cdots+x_{m-1}^2+x_m^{k+1}.
\]
\end{enumerate}
\end{defn}

The Airy function is the first in a series of special functions associated to 
singularities of type $A_k$. The {\it generalized Airy function of order $k$} 
is defined as
\[
Ai_k(y_1,\ldots ,y_{k-1})=\int_\R \exp\left(i\left(y_{k-1}x+\cdots 
+y_1\frac{x^{k-1}}{k-1}+\frac{x^{k+1}}{k+1}\right)\right)dx,
\]
the integral is improper (converges in the limit but not absolutely). For $k=2$ we 
recover the Airy function: $Ai_2(y)=(2\pi)\mathsf{Ai}(y)$. In general, the Airy function 
of order $k$ governs the asymptotic behavior of families of oscillatory integrals whose 
phase 
functions have $A_k$ type singularities. For more information on generalized Airy 
functions we refer the reader  to \cite[\S 7.9]{G-S}.

In this section we shall be interested in cusp singularities. The Airy function of order 
$3$ bears a special name: one calls
\begin{equation*}
\mathsf{Pe}(y_1,y_2)=Ai_{3}(y_1,y_2)=\int_\R 
\exp\left(i\left(y_2x+y_1\frac{x^2}{2}+\frac{x^4}{4}\right)\right)dx
\end{equation*}
the {\it Pearcey function}. It was first introduced (and numerically computed) in 
\cite{Pe}. Below we shall need the fact that
\begin{equation}\label{def-Pearcey}
\mathsf{Pe}(0,0)=\Gamma(\tfrac14) e^{i\pi/8}/\sqrt{2}\neq 0.
\end{equation}
Note that unlike the Airy function $\mathsf{Ai}(y)$, the Pearcey function (and indeed all 
higher order Airy functions) is a complex valued function. 

The phase function
\[
\varphi(x,y)=y_2x+y_1\frac{x^2}{2}+\frac{x^4}{4},\qquad (x\in\R,\; y=(y_1,y_2)\in\R^2),
\]
has critical set $\{(x,y)\in\R\times\R^2: y_2+xy_1+x^3=0\}$. The horizontal projection is 
singular at $y=0$. The image of the critical set under the map (analogous to that in 
\eqref{immersion}) which sends $(x,y)\in\R\times\R^2$ to $(y,\partial_y \varphi(x,y))\in 
T^*\R^2$ is the Lagrangian surface $\{(y_1,y_2; x^2/2,x)\in T^*\R^2: y_2+xy_1+x^3=0\}$.

The following lemma allows one to reduce the asymptotic behavior of an oscillatory 
integral whose phase function has a cusp singularity to the asymptotic behavior of the 
Pearcey function. For notational convenience, we set $\mathsf{Pe}_0:=\mathsf{Pe}$, 
$\mathsf{Pe}_1:=\partial_{y_1}\mathsf{Pe}$, and 
$\mathsf{Pe}_2:=\partial_{y_2}\mathsf{Pe}$.

\begin{lem}\label{degen F2} Let $\varphi\in C^\infty(\R^m\times \R^n,\R)$, and $y_c\in 
\R^n$ be 
such that 
$x\mapsto \varphi(x,y_c)$ admits a critical point $x_c\in \R^m$, which is of singularity 
type $A_3$ according to Definition \ref{A2 def}. Let $Q$ denote the Hessian quadratic 
form, and write $\sigma=p-q$, where $(p,q)$ is the signature of $Q$.
There exist
\begin{enumerate}
\item a compact neighborhood $K'\times K\subset \R^m\times \R^n$ of $(x_c,y_c)$;
\item real valued functions $r_1,r_2,s\in C^\infty_c(\R^n)$, satisfying 
$r_1(y_c)=r_2(y_c)=0$, $s(y_c)=\varphi (x_c,y_c)$, and $\nabla s(y_c) = \nabla_y 
\varphi(x_c,y_c)$;
\end{enumerate}
such that for all $\alpha\in C^\infty_c(\R^m)$ with support in $K'$ and all $y\in K$ and 
$t \geq 1$ 
the integral
\begin{equation}\label{appendix int}
\left(\frac{t}{2\pi}\right)^{\frac{m}{2}}\int_{\R^m} \alpha(x)e^{it \varphi(x,y)}dx
\end{equation}
is equal to
\[
e^{i\pi\sigma/4}e^{its(y)}t^{\frac{1}{4}}
\sum_{j=0,1,2}t^{-\frac{j}{4}}\alpha_j(y)
\mathsf{Pe}_j\left(t^{3/4}r_1(y),t^{1/2}r_2(y)\right)
+O\left(t^{-\frac34}\right),
\]
for some functions $\alpha_j\in C^\infty_c(\R^n)$. Moreover, we have
\[
\alpha_0(y_c)=(2\pi)^{-1/2} 
(\partial_w 
W(w_c,y_c))^{-1}
 |\det\nolimits' Q|^{-1/2} \alpha(x_c),
\]
where $\det\nolimits' Q$ is the product of all non-zero eigenvalues of $Q$, and the 
function $W$ is specified in the proof of the lemma.
\end{lem}

\begin{proof}
The proof proceeds in the same way as the analogous result \cite[Theorem 7.7.19]{Ho} for 
fold singularities.

Let $V_0$ denote the orthogonal complement inside $\R^m$ of $\ker Q$. For any $(x,y)\in 
\R^m\times \R^n$ let $Q_0(x,y)$ denote the matrix of second order partial derivatives of 
$\varphi$ relative to a basis of $V_0$. One writes the integration domain as 
$\R^m=\R^{m-1}\times \ker Q$, where $\R^{m-1}=V_0$. We write the generic 
element of $\R^{m-1}$ as $v$ and the generic element of $ \ker Q$ as $w$. The 
coordinates of the critical point $x_c\in \R^m$ are denoted $(v_c,w_c)\in \R^{m-1}\times 
\ker Q$.

Keeping $w\in \ker Q$ fixed, the leading term asymptotic of the integral
\[
\left(\frac{t}{2\pi}\right)^{\frac{m-1}{2}}\int_{\R^{m-1}} 
\alpha(v,w)e^{it\varphi(v,w,y)}dv
\]
is given by
\[
e^{i\pi\sigma/4}e^{it\varphi (\overline{v}(w,y),w,y)}|\det 
Q_0(\overline{v}(w,y),w,y)|^{-1/2}\alpha (\overline{v}(w,y),w),
\]
where $\overline{v}(w,y)$ is the unique critical point of $v\mapsto \varphi(v,w,y)$.
(For this, see \cite[Theorem 7.7.6]{Ho} or \cite[Theorem 
2.9]{Var}). One then applies the $A_3$ stationary phase lemma to the remaining 
one-dimensional integral 
\[
\left(\frac{t}{2\pi}\right)^{\frac12}\int_{\ker Q}\beta(w,y)e^{it \phi(w,y)} dw.
\]
Here we have written $\beta(w,y)=|\det Q_0 (\overline{v}(w,y),w,y)|^{-1/2}\alpha 
(\overline{v}(w,y),w)$ and $\phi(w,y)=\varphi (\overline{v}(w,y),w,y)$. For the $A_3$ 
asymptotic, see \cite[\S 7, Theorem 9.1]{G-S} or \cite[(3.12)]{KKM}; moreover, one can 
 adapt \cite[Theorem 7.7.18]{Ho} to the case of cusp singularities.

The result is a leading term asymptotic of the form specified in Lemma \ref{degen F2}, 
{\it but without the explicit expression for $\alpha_0(y_c)$}. Indeed, in these 
references 
no formula for $\alpha_0(y_c)$ is given. One can, however, extract this value from the 
proof of \cite[Theorem 7.7.18]{Ho}. We indicate how to do so now.

The Malgrange preparation theorem~\cite[Theorem 7.5.13]{Ho}, when applied to our phase 
function $\phi$, shows the existence of real valued functions
\begin{enumerate}
\item $W\in C^\infty(\ker Q\times \R^n)$ satisfying $W(w_c,y_c)=0$, $\partial_w 
W(w_c,y_c)>0$, 
\item $r_1,r_2,s\in C_c^\infty(\R^n)$ satisfying $r_1(y_c)=r_2(y_c)=0$ and 
$s(y_c)=\phi(w_c,y_c)$, 
\end{enumerate}
such that
\[
\phi(w,y)=\frac{W^4}{4}+r_2(y)\frac{W^2}{2}+r_1(y)W+s(y)
\]
in a neighborhood of $(w_c,y_c)$. 
We see that $\nabla s(y_c) = \nabla_y \phi(w_c,y_c)=\nabla_y \varphi(v_c,w_c,y_c)$ 
because 
$\nabla_v \varphi(v_c,w_c,y_c)=0$ since $v_c$ is a critical point.

Concerning our amplitude function $\beta$, a slightly 
different 
version of the Malgrange preparation theorem~\cite[Theorem 7.5.6]{Ho} shows the existence 
of functions $q\in C^\infty(\ker Q\times \R^n)$ and $A_0,A_1,A_2\in C^\infty(\R^n)$, 
verifying
\[
(\partial_w W(w,y))^{-1}
\beta(w,y)=(W^3+r_2(y)W+r_1(y))q(w,y)+A_2(y)W^2+A_1(y)W+A_0(y)
\]
in a neighborhood of $(w_c,y_c)$. Following the argument of H\"ormander in \cite[Theorem 
7.7.18]{Ho}, one sees that the leading term asymptotics for the $\R^m$-integral are given 
by
\begin{align*}
\sum_{j=0,1,2}&(2\pi)^{-1/2}e^{i\pi\sigma/4}e^{its(y)}A_j(y)\int_{\ker Q} 
W^je^{it\left(\frac{W^4}{4}+r_2(y)\frac{W^2}{2}+r_1(y)W\right)} dW\\
&=\sum_{j=0,1,2}
(2\pi)^{-1/2}e^{i\pi\sigma/4}e^{its(y)}
A_j(y)t^{\frac{1-j}{4}}\mathsf{Pe}_j(t^{3/4}r_1(y),t^{1/2}r_2(y)).
\end{align*}
The functions $(2\pi)^{-1/2} A_j$ are the $\alpha_j$ appearing in the statement of Lemma 
\ref{degen F2}.
One computes the value of each $\alpha_j(y_c)$ by evaluating 
$\partial^j_y\beta(w_c,y_c)$. For example,
\begin{align*}
(2\pi)^{\frac12} \alpha_0(y_c)=\frac{ \beta(w_c,y_c)}{\partial_w 
W(w_c,y_c)}
&=(\partial_w 
W(w_c,y_c))^{-1}
 |\det Q_0 (\overline{v}(w_c,y_c),w_c,y_c)|^{-1/2}\alpha 
(\overline{v}(w_c,y_c),w_c)\\
&=
(\partial_w 
W(w_c,y_c))^{-1}
 |\det Q_0|^{-1/2}\alpha (x_c).
\end{align*}
This proves the lemma.
\end{proof}

\subsection{Takhtadjan-Vinogradov formula}
A formula of Takhtadjan--Vinogradov yields an integral representation of $W_\nu$ 
involving the product of two $\GL(2)$-Bessel functions. We will use below this integral 
representation to establish Theorem \ref{n=3 uniform statement}.
As a first step towards this, we now briefly explain how one can use the Takhtadjan--Vinogradov formula in the self-dual case to recover
\begin{enumerate}
\item the description of the shadow zone $\mathsf{S}$ as described in Proposition \ref{sd-exact-locus};
\item the caustic line $\mathsf{C}_1$, as described in Theorem \ref{prop:CRIT},
\end{enumerate}
both of which were obtained using the $\GL(3)$ Jacquet integral.

\begin{prop}[Takhtadjan--Vinogradov] For every $a=\operatorname{diag}(y_1y_2,y_2,1)\in 
A$, and $\nu=i(t_1 \varpi_1 + t_2 
\varpi_2)\in i\mathfrak{a}^*$, the spectrally normalized Whittaker function 
\[
\Gamma_\R(1+it_1)\Gamma_\R(1+it_2)\Gamma_\R(1+it_1+it_2) W_\nu(a)
\]
is equal to
\begin{equation*}
y_1^{1+i\frac{t_1-t_2}{6}}y_2^{1+i\frac{t_2-t_1}{6}}
\int_0^\infty
K_{i\frac{t_1+t_2}{2}}(2\pi y_1 \sqrt{1+u}) K_{i\frac{t_1+t_2}{2}}(2\pi y_2 
\sqrt{1+u^{-1}}) u^{\frac{i(t_1-t_2)}{4}} \frac{du}{u}.
\end{equation*}
\end{prop}

If $t_1=t_2$, then the term $u^{\frac{i(t_1-t_2)}{4}}$ is identically 1. 
Up to a bounded multiplicative constant, we have for every $t\in \R_{>0}$ and $a\in A$,
\begin{equation}\label{TV-formula}
W_{t\nu_0}(a) \asymp
y_1y_2 e^{2\pi^2 t} \int_0^\infty
K_{2\pi it}\left(2\pi y_1 \sqrt{1+u} \right)  K_{2\pi it}\left(2\pi y_2 \sqrt{1+u^{-1}}
\right) \frac{du}{u}.
\end{equation}
The integrand is negligible if one of the $K$-Bessel functions is in the region of 
uniform rapid decay. If this happens for all $u\in \mathbb{R}_{>0}$ then $W_\nu(a)$ is 
also of rapid decay. This motivates the following

\begin{lem}\label{l:min-max}
	For every $y_1,y_2\in \R_{>0}$, we have
\begin{equation*}\label{min-max}
\min\limits_{u\in \R_{>0}} 
\max\left(y_1 \sqrt{1+u},y_2 \sqrt{1+u^{-1}}\right)
	=
	\sqrt{y_1^2 + y_2^2}.
\end{equation*}
\end{lem}

\begin{proof} The value on the right-hand side is achieved for $u:=y_2^2/y_1^2$.
Conversely, for any $u,y_1,y_2\in \R_{>0}$, we apply the inequality $\max(a,b)\ge 
\sqrt{\frac{a^2+ub^2}{1+u}}$ for $a=y_1\sqrt{1+u}$ and $b=y_2\sqrt{1+u^{-1}}$, to find
$\max(a,b) \ge \sqrt{y_1^2 + y_2^2}.$
\end{proof}

We deduce from Lemma~\ref{l:min-max} that 
\[
\bigcap_{u\in \R_{>0}}
\left\{2\pi \abs{t} \leq 2\pi y_1\sqrt{1+u}\right\}  
\cup
\left\{2\pi \abs{t} \leq 2\pi y_2\sqrt{1+u^{-1}}\right\}
=
\left\{
t^2 \leq y_1^2+y_2^2
\right\}.
\]
This is the ``essential support" for the integral~\eqref{TV-formula}. We thus recover the shadow zone $\mathsf{S}$ for $W_{t\nu_0}(a)$, 
given by $a\notin \operatorname{Im}(\Lambda_{t\nu_0} \to A)$, and
previously found in Proposition~\ref{sd-converse}.

Next, let us choose $y_1,y_2$ such that $y_1^2+y_2^2=t^2$.
It is well-known that the transition range for the Bessel function $K_{it}(y)$ is
$y=t+O(t^{\frac13})$, and that $K_{it}(t) \asymp t^{-\frac13}e^{-\pi t/2}$.
 By the same reasoning as above, the integrand of~\eqref{TV-formula} is negligible unless 
 $u=y_2^2/y_1^2 + O(t^{-\frac{2}{3}})$.
In this range of $u$ the integrand is mildly oscillating and the expected size 
of~$|W_{t\nu_0}(a)|$ is 
\[
y_1y_2 \cdot e^{2\pi^2 t} \cdot t^{-\frac23} e^{-2\pi^2 t} \cdot t^{-\frac23} 
\asymp
t^{\frac23}.
\]
This is consistent with the exponent that arises in Theorem~\ref{n=3 tau}, because $a\in 
\operatorname{Im}(\Lambda_{t\nu_0}^{(1)} - \Lambda^{(0)}_{t\nu_0})$ belongs to the 
component $\mathsf{C}_1$ of the caustic set, and is a type $A_2$ singularity. 

\subsection{Cuspidal points}
We now undertake a more detailed analysis of the Takhtadjan--Vinogradov formula (again, 
in the self-dual case), using it to provide an independent proof of the existence of a 
type-$A_3$ singularity, as in Theorem \ref{prop:CRIT}.

From now on, we specialize to $y_1=y_2=y$ in the interval $(\tfrac12,2)$. Hence
$a=\operatorname{diag}(y^2,y,1)$, and recall $\nu_0=2\pi i(\varpi_1+\varpi_2)$.
\begin{lem}\label{lem:oscillatory-Whittaker}
There are smooth compactly supported
functions $\alpha_1,\alpha_2$, resp. $\beta$, which are constant and non-vanishing in 
$[\tfrac18,2]$, resp.
in $[\tfrac13,3]$, such that 
\[
|W_{t\nu_0}(ta)|
\asymp
t^2
\bigg|\int^\infty_{0}\int^\infty_{0}\int^\infty_{0}
e^{2\pi it\phi_a(x_1,x_2,u)}
\alpha_1(x_1) \alpha_2(x_2)\beta(u)
\frac{dx_1}{x_1}\frac{dx_2}{x_2}\frac{du}{u}\bigg|
 + O_N(t^{-N}),
\]
where the phase 
function is
\[
\phi_a(x_1,x_2,u)=
\log x_1 + \log x_2 + \frac12\log u - \frac{1}{2}\bigg((1+u)x_1-\frac{y^2}{x_1}\bigg)
- \frac{1}{2}
\bigg(x_2-\frac{(1+u) y^2}{u x_2}\bigg).
\]
\end{lem}

\begin{proof}
In view of the exponential decay of 
Bessel functions of large argument, we may truncate the $u$-integral in the 
Takhtajan-Vinogradov formula to a compact subset 
of $(0,\infty)$. We do the truncation smoothly with a function $\beta$ supported inside 
$[\tfrac13,3]$:
\[
|W_{t\nu_0}(ta)| \asymp
(ty)^2 e^{2\pi^2 t} \bigg|\int_0^\infty
K_{2\pi it}\left(2\pi ty \sqrt{1+u} \right)  K_{2\pi it}\left(2\pi ty \sqrt{1+u^{-1}}
\right) \beta(u) \frac{du}{u}\bigg|
+ O(t^{-N}).
\]
Indeed, since $y\in (\tfrac12,2)$, the inequality $y \sqrt{1+u}\le 1$ implies $u< 3$, and 
the inequality $y   
\sqrt{1+u^{-1}}\le 1$ implies $\tfrac13 < u$.

We write the two Bessel functions as follows:
\[
K_{2\pi it}\left(2\pi ty \sqrt{1+u} \right) =
\left(
\frac{\sqrt{1+u}}{y}
\right)^{2\pi i t}
\int_0^\infty e^{2\pi t (i\log x_1 - \frac12 ((1+u)x_1 + \frac{y^2}{x_1}))}
\frac{dx_1}{x_1},
\]

\[
K_{2\pi it}\left(2\pi ty \sqrt{1+u^{-1}}\right) =
\left(
\frac{\sqrt{u}}{y\sqrt{1+u}}
\right)^{2\pi i t}
\int_0^\infty e^{2\pi t (i\log x_2 - \frac12 (x_2 + \frac{(1+u)y^2}{ux_2}))}
\frac{dx_2}{x_2}. 
\]
We move the contour of integration for both $x_1$ and $x_2$ from the interval 
$(0,\infty)$ to the interval $(0,i\infty)$. The critical points of the respective 
integrals are located inside the interval $[\tfrac18,2]$, as can be seen 
from elementary manipulations, see also Lemma~\ref{quadratic-x1c} below.
\end{proof}

We now analyse the critical points of the phase function $\phi_a$.

\begin{lem}\label{quadratic-x1c}
The phase function $\phi_a(x_1,x_2,u)$ has two degenerate critical points of type $A_3$ when $a=a_{\rm cusp}$.
\end{lem}
\begin{proof}
The vanishing of the $x_1$-derivative and the $x_2$-derivative yield the quadratic 
equations
\[
(1+u)x_{1c}^2+y^2=2x_{1c},\quad
ux_{2c}^2 +(1+u)y^2 = 2x_{2c}.
\]
If $u$ is small enough (resp. large enough), then there are two distinct positive 
solutions to the 
first equation (resp. second 
equation). We find
\[
\frac{1}{2}\bigg(x^\pm_{1c}(1+u)-\frac{y^2}{x^\pm_{1c}}\bigg)
=
\mp \sqrt{1 - y^2(1+u)},\quad
\frac{1}{2}\bigg(x^\pm_{2c}-\frac{(1+u)y^2}{ux^\pm_{2c}}\bigg)
=
\mp \sqrt{1 - y^2(1+1/u)}.
\]
Specializing to $u=1$, we find
\[
x^\pm_{1c} = \frac{1 \mp \sqrt{1-2y^2}}{2},\quad
x^{\pm}_{2c}=  1 \mp \sqrt{1-2y^2}.
\]
The $u$-derivative vanishes for $(x^+_{1c},x^+_{2c},1)$, and for 
$(x^-_{1c},x^-_{2c},1)$, 
regardless of the value of 
$y$.

Specializing to $y=1/\sqrt{3}$, we obtain two isolated critical points
\[
\left(\tfrac12(1 - \frac{1}{\sqrt{3}}),1 - \frac{1}{\sqrt{3}},1
\right),\quad
\left(\tfrac12(1 + \frac{1}{\sqrt{3}}),1 + \frac{1}{\sqrt{3}},1
\right).
\]
We compute the Hessian matrices
\[
Q^+= 
\Mtroistrois{6+4 \sqrt{3}}{0}{-1/2}
{0}{\frac32+\sqrt{3}}{\frac12+ \frac14\sqrt{3}}
{-1/2}{\frac12+\frac14\sqrt{3}}{\frac{1}{2\sqrt{3}}},
\quad
Q^-= 
\Mtroistrois{6-4 \sqrt{3}}{0}{-1/2}
{0}{\frac32 -\sqrt{3}}{\frac12 - \frac14\sqrt{3}}
{-1/2}{\frac12 - \frac14\sqrt{3}}{\frac{-1}{2\sqrt{3}}}
\]
We have $\det(Q^+)=\det(Q^-)=0$, and
\[
\operatorname{ker}(Q^+)=
\left(1,
-2\sqrt{3}-4,
8\sqrt{3}+12
\right)^T
,\quad
\operatorname{ker}(Q^-)=
\left(1,2\sqrt{3}-4,12-8\sqrt{3} \right)^T.
\]
Hence $(x^\pm_{1c},x^\pm_{2c},1)$ are two singularities of type $A$.
To complete the proof, we establish that their Milnor numbers are equal to $3$ as follows.
A Gr\"obner basis calculation shows that these are all the critical points of 
$\phi_{a_{\rm cusp}}$.

Let $\mathfrak{m}_\pm$ be the maximal ideal of $\C[x_1,x_2,u]$ generated by 
$(x_1-x^{\pm}_{1c},x_2-x^{\pm}_{2c},u-1)$. 
A computer calculation of the Taylor expansion of $\phi_{a_{\rm cusp}}$ to order three 
shows that:
\begin{equation}\label{determinacy}
\mathfrak{m}^5_\pm \subset \mathfrak{m}^2_\pm (\nabla \phi_{a_{\rm cusp}}) + 
\mathfrak{m}^6_\pm.
\end{equation}
Thus Nakayama's lemma implies that it is sufficient to consider the 
truncated Jacobian rings by the ideal $\mathfrak{m}_\pm^5$.
A computer calculation of the Taylor expansion to order four then gives the dimension of 
the Jacobian ring:
\[
\dim_\C \C[x_1,x_2,u]/ (\mathfrak{m}^5_\pm,\nabla \phi_{a_{\rm cusp}}) =3.\qedhere
\]
\end{proof}
\begin{remark}
The inclusion~\eqref{determinacy} is the hypothesis of the determinacy 
theorem~\cite[Thm.~2.23]{Greuel-Lossen}, which implies the stronger statement that the 
the singularity is $4$-determined, in the sense that its germ is determined up to 
diffeomorphism by the derivatives of order less than four.
\end{remark}

\subsection{Transition region of the Whittaker function}
Having laid the groundwork, we now come to the proof of Theorem~\ref{n=3 uniform 
statement}. We wish to analyse the oscillatory integral
(compared to Lemma~\ref{lem:oscillatory-Whittaker} we have removed a factor of 
$t^{\frac12}$, which will be added back in~\eqref{cusp-waves})
\[
\left(
\frac{t}{2\pi}
\right)^{\frac32}
\int^\infty_{0}\int^\infty_{0}\int^\infty_{0}
e^{2\pi it\phi_a(x_1,x_2,u)}
\alpha_1(x_1)\alpha_2(x_2)\beta(u)
\frac{dx_1}{x_1}\frac{dx_2}{x_2}\frac{du}{u}
\]
of Lemma \ref{lem:oscillatory-Whittaker}. In view of Lemma \ref{quadratic-x1c}, we may apply Lemma \ref{degen F2} to find 
non-zero functions $\mathsf{r}_1^\pm,\mathsf{r}_2^\pm,\beta_j^\pm,\mathsf{s}_\pm\in 
C^\infty_c(A)$, 
with 
$\mathsf{r}_i^\pm(a_{\rm 
cusp})=0$ and $\mathsf{s}_\pm(a_{\rm cusp})=\phi_{a_{\rm 
cusp}}(x_{1c}^\pm,x^\pm_{2c},1)$, 
such that
the above integral is asymptotic to
\[
t^{\frac14} 
\sum_\pm
e^{2\pi it\mathsf{s}_\pm(a)}\sum_{j=0,1,2} 
p_j^\pm(a,t)t^{-\frac{j}{4}}+O(t^{-\frac34}),
\]
where we have put
\[
p_j^\pm(a,t)=e^{i\pi\sigma^\pm/4}\beta_j^\pm(a)
\mathsf{Pe}_j\left(t^{3/4}\mathsf{r}_1^\pm(a),t^{1/2}\mathsf{r}_2^\pm(a)\right),
\]
uniformly for all $a$ in a sufficiently small compact $V$ about $a_{\rm cusp}$, and all 
$t>1$. 

Next, pick small enough constants $c_1,c_2>0$ to be chosen later, and restrict to the 
compact 
subsets
\[
V_\pm(t):=\{a\in V:\; t^{3/4}|\mathsf{r}^\pm_1(a)|\leq c_1\;\text{ and }\; 
t^{1/2}|\mathsf{r}^\pm_2 
(a)|\leq c_2\}.
\]
Write $V(t):=V_+(t) \cap V_-(t)$. On $V(t)$, the first term in the asymptotic dominates, 
so the integral is asymptotic to
\[
t^{\frac14}\sum_\pm e^{2\pi it\mathsf{s}_\pm(a)}p_0^\pm(a,t)+O(1).
\]
Thus, on the 
whole neighborhood $a\in V$ we have that $W_{t\nu_0}(ta)$ is a superposition of 
Pearcey functions, whereas
 in the shrinking neighborhood $a\in V(t)$, the asymptotic of the Whittaker function 
simplifies as a superposition of 
plane waves; indeed we have shown 
\begin{equation}\label{cusp-waves}
|W_{t\nu_0}(ta)| \asymp t^{\frac34} 
\left|
p_0^+(a,t) e^{2\pi i t\mathsf{s}_+(a)}
+ p_0^-(a,t) e^{2\pi i t\mathsf{s}_-(a)}
\right|
+O(t^{\frac12}).
\end{equation}
This is parallel to \S\ref{s:superposition-waves} which discusses the light zone, except 
there we don't need to shrink the neighborhood $V$. 
Note that $ta$ for $a\in V(t)$ contains a ball of width $O(t^{\frac14})$ around the 
critical  point $ta_{\rm cusp}$. By comparison, the width of the transition region for the classical Bessel function is well-known to be $O(t^{\frac13})$.

The argument below will ensure that the two plane waves in~\eqref{cusp-waves} have 
distinct amplitudes, from which we shall deduce the following strengthening of 
Theorem~\ref{n=3 uniform statement}.

\begin{thm}\label{thm:large-values-n=3} There is an absolute constant $C>0$, such that 
for every $t\gg 1$ and $a\in V(t)$
\[
    |W_{t\nu_0}(ta)| \ge C t^{3/4}.
\]
\end{thm}

\begin{proof}
We shall show that for every $t>1$, and 
every $a\in V(t)$,
\begin{equation}\label{end-of-the-proof}
|p_0^+(a,t) e^{2\pi its_+(a)}
+ p_0^-(a,t) e^{2\pi its_-(a)}|\gg 1.
\end{equation}

Indeed, we have for every $t$,
\[
p_\pm(a_{\rm cusp},t)=p_\pm(a_{\rm cusp},0) = e^{i\pi\sigma^\pm/4} 
\beta_0^\pm(a_{\rm cusp})
\mathsf{Pe}(0,0).
\]
Now $\mathsf{Pe}_0\left(t^{3/4}r_1^\pm(a),t^{1/2}r_2^\pm(a)\right)$ and $\beta_0^\pm 
(a)$ 
are non-vanishing at $a_{\rm cusp}$. Thus $p_0^\pm(a,t)\gg 1$ for every $t>1$ and $a\in 
V(t)$, if $c_1,c_2$ are chosen small enough. 

The characteristic polynomial of $Q^\pm$ is $X(X^2 + (- 
\frac{15}{2} \mp \frac{31}{2\sqrt{3}} )X  
+ \frac{365}{16} \pm 13\sqrt{3})$.
 As the 
quadratic factor admits two negative (resp., positive) roots, according to the sign of 
$Q^\pm$, the signature of $Q^+$ (resp., $Q^-$) is $(2,0)$ (resp., $(0,2)$).
Hence $\sigma^\pm=\pm 2$.
From the expression for the 
characteristic polynomial of $Q^\pm$ above, it follows that $|\det\nolimits' 
Q^\pm|= \frac{365}{16}  \pm 13\sqrt{3}$, and a numerical calculation yields

\[
\frac{\beta_0^+(a_{\rm cusp})}{\beta^-_0(a_{\rm cusp})}
=
\frac{\partial_w W(x^-_{1c},x^-_{2c},1)|\det\nolimits' 
Q^-|^{\frac12}x^-_{1c} x^-_{2c}}
{\partial_w W(x^+_{1c},x^+_{2c},1) |\det\nolimits' Q^+|^{\frac12} x^+_{1c} 
x^+_{2c}}
\neq 1.
\]
Thus the amplitudes in~\eqref{cusp-waves} are distinct at $a_{\rm cusp}$, which concludes 
the proof.
\end{proof}

\begin{remark}

The theorem holds in particular for $a=a_{\rm cusp}$. In fact we can also write the 
following asymptotic as $t\to \infty$,
\[
W_{t\nu_0}(ta_{\rm cusp}) \asymp t^{\frac34}
\left(
\beta_0^+(a_{\rm cusp})
e^{2\pi it\mathsf{s}_+(a_{\rm cusp})}
-
\beta_0^-(a_{\rm cusp})
e^{2\pi it\mathsf{s}_-(a_{\rm cusp})}
\right)
+O(t^{\frac12}),
\]
where the amplitudes $\beta^\pm_0(a_{\rm cusp})$ are non-zero by~\eqref{def-Pearcey}, and the 
critical values of the phase 
function are
\[
\mathsf{s}^\pm(a_{\rm cusp})=\phi_{a_{\rm 
cusp}}(x^{\pm}_{1c},x^{\pm}_{2c},1)=\frac{\pm 
2}{\sqrt{3}}+\log(\frac{2}{3} \mp \frac{1}{\sqrt{3}}).
\]
In particular, we evaluate $\mathsf{s}^+(a_{\rm cusp})-\mathsf{s}^-(a_{\rm cusp})$ to be $\frac{4}{\sqrt{3}}-2\log(2+\sqrt{3})$, which is in agreement with Remark \ref{rem-S-cusp}.
\end{remark}

\nocite{Var}

\bibliographystyle{abbrv}

\bibliography{BT,Ball}

\def\cprime{$'$} \def\cprime{$'$} \def\cprime{$'$} \def\cprime{$'$}
\begin{thebibliography}{10}

\bibitem{Adler79}
M.~Adler.
\newblock On a trace functional for formal pseudo differential operators and
  the symplectic structure of the {K}orteweg-de\thinspace {V}ries type
  equations.
\newblock {\em Invent. Math.}, 50(3):219--248, 1978/79.

\bibitem{A2}
V.~I. Arnol{\cprime}d.
\newblock Normal forms of functions near degenerate critical points, the {W}eyl
  groups {$A_{k},D_{k},E_{k}$} and {L}agrangian singularities.
\newblock {\em Funkcional. Anal. i Prilo\v zen.}, 6(4):3--25, 1972.

\bibitem{A1}
V.~I. Arnol{\cprime}d.
\newblock Remarks on the method of stationary phase and on the {C}oxeter
  numbers.
\newblock {\em Uspehi Mat. Nauk}, 28(5(173)):17--44, 1973.

\bibitem{AV}
V.~I. Arnol'd, S.~M. Gusein-Zade, and A.~N. Varchenko.
\newblock {\em Singularities of differentiable maps. {V}olume 1}.
\newblock Modern Birkh\"auser Classics. Birkh\"auser/Springer, New York, 2012.
\newblock Reprint of the 1985 edition.

\bibitem{Arthur78}
J.~G. Arthur.
\newblock A trace formula for reductive groups. {I}. {T}erms associated to
  classes in {$G({\bf Q})$}.
\newblock {\em Duke Math. J.}, 45(4):911--952, 1978.

\bibitem{Avakumovic:eigenfunktionen}
V.~G. Avakumovi\'c.
\newblock \"uber die {E}igenfunktionen auf geschlossenen {R}iemannschen
  {M}annigfaltigkeiten.
\newblock {\em Math. Z.}, 65:327--344, 1956.

\bibitem{Azad-vandenBan-Biswas}
H.~Azad, E.~van~den Ban, and I.~Biswas.
\newblock Symplectic geometry of semisimple orbits.
\newblock {\em Indag. Math. (N.S.)}, 19(4):507--533, 2008.

\bibitem{Ba}
C.~B. Balogh.
\newblock Asymptotic expansions of the modified {B}essel function of the third
  kind of imaginary order.
\newblock {\em SIAM J. Appl. Math.}, 15:1315--1323, 1967.

\bibitem{Baruch}
E.~M. Baruch.
\newblock A proof of {K}irillov's conjecture.
\newblock {\em Ann. of Math. (2)}, 158(1):207--252, 2003.

\bibitem{Bernstein-Krotz}
J.~Bernstein and B.~Kr\"otz.
\newblock Smooth {F}rechet globalizations of {H}arish-{C}handra modules.
\newblock 199(1):45--111, 2014.

\bibitem{BeRe}
J.~Bernstein and A.~Reznikov.
\newblock Subconvexity bounds for triple {$L$}-functions and representation
  theory.
\newblock {\em Ann. of Math. (2)}, 172(3):1679--1718, 2010.

\bibitem{Berry:waves-Thom}
M.~Berry.
\newblock Waves and {T}hom's theorem.
\newblock {\em Advances in {P}hysics}, 25:1--26, 1976.

\bibitem{Bloch-Gay--Balmaz-Ratiu}
A.~M. Bloch, F.~Gay-Balmaz, and T.~S. Ratiu.
\newblock The geometric nature of the {F}laschka transformation.
\newblock {\em Comm. Math. Phys.}, 352(2):457--517, 2017.

\bibitem{Blomer-Maga:gln}
V.~Blomer and P.~Maga.
\newblock Subconvexity for sup-norms of cusp forms on {$\rm {PGL}(n)$}.
\newblock {\em Selecta Math. (N.S.)}, 22(3):1269--1287, 2016.

\bibitem{BP:sup-norm-Siegel}
V.~Blomer and A.~Pohl.
\newblock The sup-norm problem on the {S}iegel modular space of rank two.
\newblock {\em Amer. J. Math.}, 138(4):999--1027, 2016.

\bibitem{BrumAJM}
F.~Brumley.
\newblock Effective multiplicity one on {${\rm GL}_N$} and narrow zero-free
  regions for {R}ankin-{S}elberg {$L$}-functions.
\newblock {\em Amer. J. Math.}, 128(6):1455--1474, 2006.

\bibitem{Brum06}
F.~Brumley.
\newblock Second order average estimates on local data of cusp forms.
\newblock {\em Arch. Math. (Basel)}, 87(1):19--32, 2006.

\bibitem{Chazarain}
J.~Chazarain.
\newblock Formule de {P}oisson pour les vari\'et\'es riemanniennes.
\newblock {\em Invent. Math.}, 24:65--82, 1974.

\bibitem{Cianchi-Mazya:noncompact}
A.~Cianchi and V.~G. Maz'ya.
\newblock Bounds for eigenfunctions of the {L}aplacian on noncompact
  {R}iemannian manifolds.
\newblock {\em Amer. J. Math.}, 135(3):579--635, 2013.

\bibitem{Cohn}
L.~Cohn.
\newblock {\em Analytic theory of the {H}arish-{C}handra {$C$}-function}.
\newblock Lecture Notes in Mathematics, Vol. 429. Springer-Verlag, Berlin,
  1974.

\bibitem{CdV:singular-lagrangian}
Y.~Colin De~Verdi{\`e}re.
\newblock Singular {L}agrangian manifolds and semiclassical analysis.
\newblock {\em Duke Math. J.}, 116(2):263--298, 2003.

\bibitem{Coste}
M.~Coste.
\newblock An {I}ntroduction to {S}emialgebraic {G}eometry.
\newblock {\em Dip. Mat. Univ. Pisa}, 2000.

\bibitem{Davis:aspherical}
M.~W. Davis.
\newblock Some aspherical manifolds.
\newblock {\em Duke Math. J.}, 55(1):105--139, 1987.

\bibitem{Do}
H.~Donnelly.
\newblock Bounds for eigenfunctions of the {L}aplacian on compact {R}iemannian
  manifolds.
\newblock {\em J. Funct. Anal.}, 187(1):247--261, 2001.

\bibitem{Duistermaat:oscillatory}
J.~J. Duistermaat.
\newblock Oscillatory integrals, {L}agrange immersions and unfolding of
  singularities.
\newblock {\em Comm. Pure Appl. Math.}, 27:207--281, 1974.

\bibitem{DKV:spectra}
J.~J. Duistermaat, J.~A.~C. Kolk, and V.~S. Varadarajan.
\newblock Spectra of compact locally symmetric manifolds of negative curvature.
\newblock {\em Invent. Math.}, 52(1):27--93, 1979.

\bibitem{DKV}
J.~J. Duistermaat, J.~A.~C. Kolk, and V.~S. Varadarajan.
\newblock Functions, flows and oscillatory integrals on flag manifolds and
  conjugacy classes in real semisimple {L}ie groups.
\newblock {\em Compositio Math.}, 49(3):309--398, 1983.

\bibitem{FLO}
B.~Feigon, E.~Lapid, and O.~Offen.
\newblock On representations distinguished by unitary groups.
\newblock {\em Publ. Math. Inst. Hautes \'Etudes Sci.}, pages 185--323, 2012.

\bibitem{reduced-lagrangian}
E.~Garc{\'{\i}}a-Tora{\~n}o~Andr{\'e}s, E.~Guzm{\'a}n, J.~C. Marrero, and
  T.~Mestdag.
\newblock Reduced dynamics and {L}agrangian submanifolds of symplectic
  manifolds.
\newblock {\em J. Phys. A}, 47(22):225203, 24, 2014.

\bibitem{GeL06}
S.~S. Gelbart and E.~M. Lapid.
\newblock Lower bounds for {$L$}-functions at the edge of the critical strip.
\newblock {\em Amer. J. Math.}, 128(3):619--638, 2006.

\bibitem{GLO:new-integral}
A.~A. Gerasimov, D.~R. Lebedev, and S.~V. Oblezin.
\newblock New integral representations of {W}hittaker functions for classical
  {L}ie groups.
\newblock {\em Uspekhi Mat. Nauk}, 67(1(403)):3--96, 2012.

\bibitem{Giventhal-Kim}
A.~Givental and B.~Kim.
\newblock Quantum cohomology of flag manifolds and {T}oda lattices.
\newblock {\em Comm. Math. Phys.}, 168(3):609--641, 1995.

\bibitem{Greuel-Lossen}
G.-M. Greuel, C.~Lossen, and E.~Shustin.
\newblock {\em Introduction to singularities and deformations}.
\newblock Springer Monographs in Mathematics. Springer, Berlin, 2007.

\bibitem{G-S}
V.~Guillemin and S.~Sternberg.
\newblock {\em Geometric asymptotics}.
\newblock American Mathematical Society, Providence, R.I., 1977.
\newblock Mathematical Surveys, No. 14.

\bibitem{HC}
Harish-Chandra.
\newblock {\em Automorphic forms on semisimple {L}ie groups}.
\newblock Notes by J. G. M. Mars. Lecture Notes in Mathematics, No. 62.
  Springer-Verlag, Berlin, 1968.

\bibitem{Ho1}
L.~H{\"o}rmander.
\newblock The spectral function of an elliptic operator.
\newblock {\em Acta Math.}, 121:193--218, 1968.

\bibitem{Hormander:FIO-I}
L.~H{\"o}rmander.
\newblock Fourier integral operators. {I}.
\newblock {\em Acta Math.}, 127(1-2):79--183, 1971.

\bibitem{Ho}
L.~H{\"o}rmander.
\newblock {\em The analysis of linear partial differential operators. {I}}.
\newblock Classics in Mathematics. Springer-Verlag, Berlin, 2003.
\newblock Distribution theory and Fourier analysis, Reprint of the second
  (1990) edition.

\bibitem{IwSa}
H.~Iwaniec and P.~Sarnak.
\newblock {$L\sp \infty$} norms of eigenfunctions of arithmetic surfaces.
\newblock {\em Ann. of Math. (2)}, 141(2):301--320, 1995.

\bibitem{Jacquet67}
H.~Jacquet.
\newblock Fonctions de {W}hittaker associ\'ees aux groupes de {C}hevalley.
\newblock {\em Bull. Soc. Math. France}, 95:243--309, 1967.

\bibitem{KKM}
E.~Kalligiannaki, T.~Katsaounis, and G.~N. Makrakis.
\newblock High frequency waves near cusp caustics.
\newblock {\em Quart. Appl. Math.}, 61(1):111--129, 2003.

\bibitem{KM99}
D.~Y. Kleinbock and G.~A. Margulis.
\newblock Logarithm laws for flows on homogeneous spaces.
\newblock {\em Invent. Math.}, 138(3):451--494, 1999.

\bibitem{Kost78}
B.~Kostant.
\newblock On {W}hittaker vectors and representation theory.
\newblock {\em Invent. Math.}, 48(2):101--184, 1978.

\bibitem{Kost79:Toda}
B.~Kostant.
\newblock The solution to a generalized {T}oda lattice and representation
  theory.
\newblock {\em Adv. in Math.}, 34(3):195--338, 1979.

\bibitem{Kostant:Toda-flag}
B.~Kostant.
\newblock Flag manifold quantum cohomology, the {T}oda lattice, and the
  representation with highest weight {$\rho$}.
\newblock {\em Selecta Math. (N.S.)}, 2(1):43--91, 1996.

\bibitem{Lapid-Mao:conj}
E.~Lapid and Z.~Mao.
\newblock A conjecture on {W}hittaker-{F}ourier coefficients of cusp forms.
\newblock {\em J. Number Theory}, 146:448--505, 2015.

\bibitem{Lapid-Offen}
E.~Lapid and O.~Offen.
\newblock Compact unitary periods.
\newblock {\em Compos. Math.}, 143(2):323--338, 2007.

\bibitem{Li09}
X.~Li.
\newblock Upper bounds on {$L$}-functions at the edge of the critical strip.
\newblock {\em Int. Math. Res. Not. IMRN}, (4):727--755, 2010.

\bibitem{LV07}
E.~Lindenstrauss and A.~Venkatesh.
\newblock Existence and {W}eyl's law for spherical cusp forms.
\newblock {\em Geom. Funct. Anal.}, 17(1):220--251, 2007.

\bibitem{Marshall:supnorm}
S.~Marshall.
\newblock Sup norms of {M}aass forms on semisimple groups.
\newblock {\em arXiv:1405.7033}.

\bibitem{Matz-Templier}
J.~Matz and N.~Templier.
\newblock Sato-{T}ate equidistribution for families of {H}ecke--{M}aass forms
  on {SL(n,R)/SO(n)}.
\newblock {\em arXiv:1505.07285}.

\bibitem{McKee:order-one}
M.~McKee.
\newblock {$K$}-finite {W}hittaker functions are of finite order one.
\newblock {\em Acta Arith.}, 158(4):359--401, 2013.

\bibitem{Milicevic}
D.~Mili{\'c}evi{\'c}.
\newblock Large values of eigenfunctions on arithmetic hyperbolic 3-manifolds.
\newblock {\em Geom. Funct. Anal.}, 21(6):1375--1418, 2011.

\bibitem{Miller02}
S.~D. Miller.
\newblock The highest lowest zero and other applications of positivity.
\newblock {\em Duke Math. J.}, 112(1):83--116, 2002.

\bibitem{Molt02}
G.~Molteni.
\newblock Upper and lower bounds at {$s=1$} for certain {D}irichlet series with
  {E}uler product.
\newblock {\em Duke Math. J.}, 111(1):133--158, 2002.

\bibitem{Moser:integrable}
J.~Moser.
\newblock Finitely many mass points on the line under the influence of an
  exponential potential--an integrable system.
\newblock In {\em Dynamical systems, theory and applications ({R}encontres,
  {B}attelle {R}es. {I}nst., {S}eattle, {W}ash., 1974)}, pages 467--497.
  Lecture Notes in Phys., Vol. 38. Springer, Berlin, 1975.

\bibitem{Muller07}
W.~M{\"u}ller.
\newblock Weyl's law for the cuspidal spectrum of {${\rm SL}_n$}.
\newblock {\em Ann. of Math. (2)}, 165(1):275--333, 2007.

\bibitem{OP80}
M.~A. Olshanetsky and A.~M. Perelomov.
\newblock Classical integrable finite-dimensional systems related to {L}ie
  algebras.
\newblock {\em Phys. Rep.}, 71(5):313--400, 1981.

\bibitem{Ortega-Ratiu}
J.-P. Ortega and T.~S. Ratiu.
\newblock {\em Momentum maps and {H}amiltonian reduction}, volume 222 of {\em
  Progress in Mathematics}.
\newblock Birkh\"{a}user Boston, Inc., Boston, MA, 2004.

\bibitem{Pe}
T.~Pearcey.
\newblock The structure of an electromagnetic field in the neighbourhood of a
  cusp of a caustic.
\newblock {\em Philos. Mag. (7)}, 37:311--317, 1946.

\bibitem{Perelomov1990Book}
A.~M. Perelomov.
\newblock {\em Integrable systems of classical mechanics and {L}ie algebras.
  {V}ol. {I}}.
\newblock Birkh\"{a}user Verlag, Basel, 1990.
\newblock Translated from the Russian by A. G. Reyman [A. G. Re\u{\i}man].

\bibitem{RW}
D.~Ramakrishnan and S.~Wang.
\newblock On the exceptional zeros of {R}ankin-{S}elberg {$L$}-functions.
\newblock {\em Compositio Math.}, 135(2):211--244, 2003.

\bibitem{Rossmann:representations-orbits}
W.~Rossmann.
\newblock Tempered representations and orbits.
\newblock {\em Duke Math. J.}, 49(1):231--247, 1982.

\bibitem{RuSa}
Z.~Rudnick and P.~Sarnak.
\newblock The behaviour of eigenstates of arithmetic hyperbolic manifolds.
\newblock {\em Comm. Math. Phys.}, 161(1):195--213, 1994.

\bibitem{SaMo}
P.~Sarnak.
\newblock Letter to {M}orawetz (2004); {L}etter to {R}eznikov on restrictions
  of eigenfunctions (2008).

\bibitem{Shahidi80}
F.~Shahidi.
\newblock Whittaker models for real groups.
\newblock {\em Duke Math. J.}, 47(1):99--125, 1980.

\bibitem{Shalika74}
J.~A. Shalika.
\newblock The multiplicity one theorem for {${\rm GL}_{n}$}.
\newblock {\em Ann. of Math. (2)}, 100:171--193, 1974.

\bibitem{SZ:maximal}
C.~D. Sogge and S.~Zelditch.
\newblock Riemannian manifolds with maximal eigenfunction growth.
\newblock {\em Duke Math. J.}, 114(3):387--437, 2002.

\bibitem{Stade02}
E.~Stade.
\newblock Archimedean {$L$}-factors on {${\rm GL}(n)\times{\rm GL}(n)$} and
  generalized {B}arnes integrals.
\newblock {\em Israel J. Math.}, 127:201--219, 2002.

\bibitem{Temp:p-adic}
N.~Templier.
\newblock Large values of modular forms.
\newblock {\em Cambridge J. Math.}, 2(1):91--116, 2014.

\bibitem{To:asymptotic-Whittaker}
T.-M. To.
\newblock Asymptotic expansions of matrix coefficients of {W}hittaker vectors
  at irregular singularities.
\newblock {\em Acta Math.}, 175(2):227--271, 1995.

\bibitem{Tomei:isospectral-tridiagonal}
C.~Tomei.
\newblock The topology of isospectral manifolds of tridiagonal matrices.
\newblock {\em Duke Math. J.}, 51(4):981--996, 1984.

\bibitem{Toth-Zelditch:uniformly-bounded}
J.~A. Toth and S.~Zelditch.
\newblock Riemannian manifolds with uniformly bounded eigenfunctions.
\newblock {\em Duke Math. J.}, 111(1):97--132, 2002.

\bibitem{Var}
V.~S. Varadarajan.
\newblock Oscillatory integrals and their applications to harmonic analysis on
  semisimple {L}ie groups.
\newblock In {\em Representation theory of reductive groups ({P}ark {C}ity,
  {U}tah, 1982)}, volume~40 of {\em Progr. Math.}, pages 207--222.
  Birkh{\"a}user Boston, Boston, MA, 1983.

\bibitem{Var1}
V.~S. Varadarajan.
\newblock The method of stationary phase and applications to geometry and
  analysis on {L}ie groups.
\newblock In {\em Algebraic and analytic methods in representation theory
  ({S}\o nderborg, 1994)}, volume~17 of {\em Perspect. Math.}, pages 167--242.
  Academic Press, San Diego, CA, 1997.

\bibitem{book:Wall1}
N.~R. Wallach.
\newblock {\em Real reductive groups. {I}}, volume 132 of {\em Pure and Applied
  Mathematics}.
\newblock Academic Press Inc., Boston, MA.

\bibitem{book:Wall2}
N.~R. Wallach.
\newblock {\em Real reductive groups. {II}}, volume 132 of {\em Pure and
  Applied Mathematics}.
\newblock Academic Press Inc., Boston, MA.

\end{thebibliography}

\end{document}